\newtheorem{defn}{Definition}
\newtheorem{theorem}{Theorem}
\newtheorem{prop}{Proposition}
\newtheorem{lemma}{Lemma}
\newtheorem{corollary}{Corollary}
\newtheorem{preremark}{Remark}
\newcommand{\Repeat}[2]{%
    \myi=0
    \loop
        \ifnum\myi<#2
        #1
        \advance\myi by 1
    \repeat
}
\def\Bbb{\bf}
\def\RR{{\Bbb R}}
\def\W2{W^{1,2}({\cal O}(M))}
\def\1half{\frac{1}{2}}
\def\Dom{\mathop{\mbox{\normalfont Dom}}\nolimits}
\newcommand{\beq}{\begin{equation*}}
\newcommand{\eeq}{\end{equation*}}
\newcommand{\lp}{\left(}
\newcommand{\rp}{\right)}
\newcommand{\Levy}{L\'evy}
\newcommand{\Ssmooth}{\boldsymbol{\mathcal{S}}}
\newcommand{\gF}{g_F}
\newcommand{\Lspa}{\mathbb{L}}
\newcommand{\arrowlim}[2]{\xrightarrow[#1\rightarrow #2]{}}
\newcommand{\hilbert}{\mathfrak{H}}
\newcommand{\Hclass}{\mathscr{H}}
\newcommand{\Wclass}{\mathscr{W}}
\newcommand{\FMclass}{\mathscr{FM}}
\newcommand{\PZclass}{\mathscr{PZ}}
\newcommand{\NN}{\mathbb{N}}
\newcommand{\Hess}{\text{\upshape{Hess}}}
\newcommand{\mean}[1]{\mathbb{E}\left[ #1 \right]}
\newcommand{\Var}[1]{\operatorname*{Var}\left[ #1 \right]}
\newcommand{\Cov}[2]{\operatorname*{Cov}\left[ #1,#2 \right]}
\newcommand{\meanW}[1]{\mathbb{E}^W\left[ #1 \right]}
\newcommand{\meanJ}[1]{\mathbb{E}^J\left[ #1 \right]}
\newcommand{\MDer}{\boldsymbol{D}}
\newcommand{\domDWJ}{$\Dom\boldsymbol{D}^{W,J}$}
\newcommand{\Rinnprod}[3]{\left\langle #1,#2 \right\rangle_{\RR^{#3}}}
\newcommand{\HSnorm}[1]{\left\| #1\right\|_{H.S.}}
\newcommand{\HSinnprod}[2]{\left\langle #1,#2 \right\rangle_{H.S.}}
\newcommand{\Lipnorm}[1]{\left\| #1\right\|_{\text{\upshape{Lip}}}}
\newcommand{\BLnorm}[1]{\left\| #1\right\|_{\text{\upshape{BL}}}}
\newcommand{\normsup}[1]{\left\| #1\right\|_\infty}
\newcommand{\Lqnormmu}[2]{\left\| #1 \right\|_{\mathbb{L}^{2}_{#2}}}
\newcommand{\innprod}[2]{\left\langle #1,#2 \right\rangle_{\hilbert}}
\newcommand{\Levyp}{\mathcal{L}}
\newcommand{\abs}[1]{\left| #1 \right|}
\newcommand{\Hprod}[2]{\left\langle #1 , #2 \right\rangle_{\hilbert}}
\newcommand{\HHprod}[2]{\left\langle #1 , #2 \right\rangle_{\hilbert^{\otimes 2}}}
\newcommand{\Hnorm}[1]{\left\| #1 \right\|_{\hilbert}}
\newcommand{\HHnorm}[1]{\left\| #1 \right\|_{\hilbert^{\otimes 2}}}
\newcommand{\Lprod}[3]{\left\langle #1 , #2 \right\rangle_{\Lspa^2_{#3}}}
\newcommand{\Lqnorm}[2]{\left\| #1 \right\|_{\Lspa^{2}_{#2}}}
\newcommand{\Lqprod}[3]{\left\langle #1 , #2 \right\rangle_{\Lspa^{2}_{#3}}}
\newcommand{\opnorm}[1]{\left\|#1\right\|_{\text{op}}}
\newcommand{\dint}{\int\hspace{-.2cm}\int}
\newcommand{\be}{\begin{equation}}
\newcommand{\ee}{\end{equation}}
\begin{document}

\title{Normal Convergence Using Malliavin Calculus\\With Applications and Examples}
%
%
%
\author{Juan Jos\'e V\'iquez R}

%
%
%
%
%
\maketitle

\begin{abstract}
We prove the chain rule in the more general framework of the Wiener-Poisson space, allowing us to obtain the so-called Nourdin-Peccati bound. From this bound we obtain a second-order Poincar\'e-type inequality that is useful in terms of computations. For completeness we survey these results on the Wiener space, the Poisson space, and the Wiener-Poisson space. We also give several applications to central limit theorems with relevant examples: linear functionals of Gaussian subordinated fields (where the subordinated field can be processes like fractional Brownian motion or the solution of the Ornstein-Uhlenbeck SDE driven by fractional Brownian motion), Poisson functionals in the first Poisson chaos restricted to infinitely many ``small'' jumps (particularly fractional L\'evy processes) and the product of two Ornstein-Uhlenbeck processes (one in the Wiener space and the other in the Poisson space). We also obtain bounds for their rate of convergence to normality.
\end{abstract}

\section{Introduction}

In recent years many papers have looked at combining Stein's method with Malliavin calculus in order to uncover new tools for proving various central limit theorems (CLTs). For example, I. Nourdin and G. Peccati derived an upper bound (NP bound) for the Wasserstein (Kantorovich) distance (and other distances) on the Wiener space using Stein's equation \cite{Nourdin0}. Later, the same authors along with G. Reinert derived a second order Poincar\'e(-type) inequality which is useful (in terms of computations) for proving CLTs, and which in fact can be seen as a quantitative extension of Stein's method from which upper bounds for the rate of convergence to normality can be found \cite{Nourdin}. The first two authors with A. R\'eveillac extended these results to the multidimensional case \cite{Nourdinmult}. In \cite{Peccati}, G. Peccati, J. L. Sol\'e, M. S. Taqqu and F. Utzet, were able to find an upper bound, similar to the one in \cite{Nourdin0}, for the Wasserstein distance in the Poisson space. G. Peccati and C. Zheng succeeded in extending this to the multi-dimensional case in \cite{Peccati2}. All these works are important as they give quantitative tools for computing whether a random variable converges to normality, and if so, its rate of convergence.

\bigskip

A question naturally arises --- can this be done for a general L\'evy process; that is, is this upper bound achievable in a mixed space: the Wiener-Poisson space? The main difficulty in answering this question is that in the Wiener-Poisson space we do not yet have a global chain rule. Neither we have a decomposition in orthogonal polynomials (unlike with Hermite polynomials in the Wiener space, see \cite{NunnoOrthpol} for a complete explanation), nor results like the equivalence between the Mehler semigroup and the Ornstein-Uhlenbeck semigroup. So, to overcome these shortcomings, we must deduce new formulas that will allow us to follow the ideas developed in \cite{Nourdin0} and \cite{Nourdin}, and recover their results for the Wiener-Poisson space. We will show that this bound still holds even when both spaces are involved. One of the major contributions of this article is the development of the unified chain rule on the Wiener-Poisson space which allows the reproduction of the Nourdin-Peccati theory on this more general space.

\bigskip

Before get into the details, some notation: Let $L_t$ be a L\'evy process ($L_t$ has stationary and independent increments, is continuous in probability and $L_0=0$, with $\mean{L_1^2}<\infty$) with L\'evy-triplet given by $(0,\sigma^2,\nu)$, where $\nu$ is the L\'evy measure. The measure $\mu$ on the underlying Hilbert space $\Lspa^2_{\mu}$ is defined by the underlying L\'evy process, that is, for any $z=(t,x)\in\RR^+\times\RR_0$ we have $d\mu(z)=\sigma^2\delta_0(x)\,dt+x^2\,\nu(dx)\,dt$, where $\RR_0=\RR-\{0\}$. Then 
\beq
\dint_{\RR^+\times\RR}f(z) \,d\mu(z)=\sigma^2\int_{\RR^+}f(t,0) \,dt+\dint_{\RR^+\times\RR_0}f(t,x) x^2\,dt\,d\nu(x).
\eeq
On the other hand, in order to define a Malliavin derivative in the Wiener-Poisson space it is sufficient to have a chaos decomposition of the space $\Lspa^2(\Omega)$. This was achieved in \cite{Ito} by K. It$\hat{\text{o}}$, so any random variable in $\Lspa^2(\Omega)$ has a projection on the $q^{\text{th}}$ chaos given by $I_q(f_q)$, where $f_q$ is a symmetric function in $\Lspa^2_{\mu^{\otimes q}}$. Also, when working in the Wiener-Poisson space, the Malliavin derivative can be regarded in terms of ``directions'', i.e., we can think of it as the derivative in the Wiener direction or the derivative in the Poisson direction. The fact that this can be done in this way is shown in \cite{Josep} by J. L. Sol\'e, F. Utzet and J. Vives (a quick review of the theory is given below). They explain that the Malliavin derivative with parameter $z\in\RR^+\times\RR$ can be split into two cases, when $z=(t,0)$ and when $z=(t,x)$ with $x\neq0$. The first case will be the derivative in the Wiener direction (intuitively because there are no jumps when $x=0$), and the second will be the derivative in the Poisson direction. A distinction between the Malliavin calculus in the Wiener space or the Poisson space and in this Wiener-Poisson space is the need to define two subspaces of $\Lspa^2(\Omega)$: one where the Malliavin derivative in the Wiener direction coincides with the usual Malliavin derivative in the Wiener space and is well defined, and another where the Malliavin derivative in the Poisson direction is well defined. This suitable subspace is denoted by $\Dom\boldsymbol{D}^{W,J}$.

\bigskip

\noindent {\bf Theorem 1.}{\em (Main result: NP Bound in Wiener-Poisson Space)\\
Suppose that $Z\sim\mathcal{N}(0,\Sigma)$ with a positive definite covariance matrix $\Sigma$. Let $F=(F^{(1)},\dots,F^{(d)})$ be such that $\mean{F}=0$ and $F^{(i)}\in$ \domDWJ, for all $i$. Then, for a distance $d_{\Hclass}$ with respect to a suitable separating class $\Hclass$,
\beq
d_{\Hclass}(Z,F)\leq \boldsymbol{k}\left(\mean{\HSnorm{\Sigma-\gF(F)}}+\mean{\Lprod{\abs{x}\left(\sum_{i=1}^d\abs{\MDer F^{(i)}}\right)^2}{\sum_{i=1}^d\abs{\MDer L^{-1}F^{(i)}}}{\mu}}\right),
\eeq
where $\gF^{i,j}(x):=\mean{\left.\innprod{\MDer F^{(i)}}{-\MDer L^{-1}F^{(j)}}\right|F=x}$, $\MDer$ is the Malliavin derivative, $L^{-1}$ is the pseudo-inverse of the infinitesimal generator of the Ornstein-Uhlenbeck semigroup, and $\hilbert$ is the underlying Hilbert space.}

\bigskip

In the case of the Wiener space upper bound, $\MDer$ is the Malliavin derivative defined in that space, so, this inequality holds even when the underlying Hilbert space is not $\Lspa^2_{\mu}$. Also, since there are no jumps here, the second term on the right disappears ($x$ is the size of the jump). In the Poisson space case, since we do not (yet) have a Malliavin calculus theory developed for a general abstract Hilbert space, the underlying Hilbert space must be $\hilbert = \Lspa^2_{\mu}$. Thanks to these (Wiener, Poisson and Wiener-Poisson) NP bounds, many CLTs can be proved and generalizations can be made. In this paper, these bounds are reviewed for each space, showing their importance by giving applications with relevant examples.

\bigskip

In the Wiener space case the second order Poincar\'e inequality is used to prove normal convergence for linear functionals of Gaussian-subordinated fields when the decay rate of the covariance function of the underlying Gaussian process satisfies certain conditions. These CLTs are applied to the important cases where the underlying Gaussian process is either the fractional Brownian motion or the fractional-driven Ornstein-Uhlenbeck process, with $H\in\bigl(0,\frac{1}{2}\bigr)\cup\bigl(\frac{1}{2},1\bigr)$.

\bigskip

In the Poisson space case, the respective upper bound is used to prove that the small jumps process (jumps with length less than or equal to $\epsilon$) of a Poisson functional process with infinitely many jumps goes to a normal random variable when $\epsilon$ goes to zero. Furthermore, we prove a remarkable extension of the known result (proved in \cite{Asmussen}) which states that the small jumps process of a L\'evy process can be approximated by Brownian motion as $\epsilon$ goes to zero. It is extended to Poisson functionals $\bigl(I_1(f)\bigr)$ and showed that the small jumps process of this functional can be approximated by a Gaussian functional with the same kernel $f$ as $\epsilon$ goes to zero. Then this result is applied to show that in order to simulate a fractional (pure jump) L\'evy process (fLp), it is sufficient to simulate a process with finitely many jumps plus an independent fractional Brownian motion (fBm).

\bigskip

Finally, the second order Poincar\'e(-type) inequality, developed in this paper, is used to prove that the time average of the product of a Wiener Ornstein-Uhlenbeck process with a Poisson Ornstein-Uhlenbeck process converges to a normal random variable as time goes to infinity. This example highlights the importance of the inequality in the Wiener-Poisson space, since it cannot be achieved by the NP bounds in the Wiener or Poisson spaces individually. An estimate of the rate of convergence to normality is obtained in the examples where the second order Poincar\'e inequalities are used.

\bigskip

The paper is organized as follows: In Section 2 we recall the basic tools of Malliavin calculus on the Wiener space and state the Malliavin calculus results for the Wiener-Poisson space. In Section 3 the general chain rules for the Wiener-Poisson space are proven. Finally, the theory developed in \cite{Nourdin} and \cite{Nourdin0} is extended using the Stein's method and the so-called Nourdin-Peccati analysis but for the Wiener-Poisson space, and within this framework we state a ``L\'evy version'' of the second order Poincar\'e inequality. Section 4 is dedicated to going over the inequalities for the Wiener, Poisson and Wiener-Poisson spaces. In the Wiener space case, is extended a result proved in \cite{Nourdin} concerning CLTs of linear functionals of Gaussian-subordinated fields. In the Poisson space case, is given a result on the simulation of small jumps for processes with infinitely many jumps. Finally, is shown an example of an application of the second order Poincar\'e inequality in the Wiener-Poisson space.

\section{Preliminaries}

As mentioned above, a useful tool for proving normal convergence on the Wiener space is the NP bound developed in \cite{Nourdin}. This requires various Malliavin calculus results on the Wiener space (Malliavin derivative, contraction of order $r$, Mehler formula, etc.) which are extensively studied and explained in \cite{Nualart}. For the sake of completeness, the basic tools from Malliavin calculus in the Wiener space are reviewed and then is introduced the Malliavin calculus	in the Wiener-Poisson setting, both needed in this article.

\subsection{Malliavin Calculus on Wiener space}

In the following we will introduce the theory of Malliavin calculus as presented in \cite{Nualart}. Let $(\Omega,\mathcal{F},\mathbb{P})$ be a probability space, where $W:=\{W(h) \mid h\in\hilbert\}$ is an isonormal Gaussian process with $\hilbert$ as a real separable Hilbert space, that is, $W$ is a centered Gaussian family such that $E[W(h_1)W(h_2)]=\Hprod{h_1}{h_2}$. Choose $\mathcal{F}$ to be the $\sigma$-algebra generated by $W$. Let $H_q$ be the $q^{\text{th}}$ Hermite polynomial, $H_q(x)=(-1)^qe^{\frac{x^2}{2}}\frac{\partial^q}{\partial x^q}\bigl(e^{-\frac{x^2}{2}}\bigr)$, and define the $q^{\text{th}}$ Wiener chaos of $W$ (denoted by $\mathbb{H}_q$) as the subspace of $\Lspa^2(\Omega):=\Lspa^2(\Omega,\mathcal{F},\mathbb{P})$ generated by $\{H_q(W(h))\mid h\in\hilbert,\Hnorm{h}=1\}$. It is important to emphasize that $\Lspa^2(\Omega)$ can be decomposed (Wiener chaos expansion) into an infinite orthogonal sum of the spaces $\mathbb{H}_q$: $\Lspa^2(\Omega)=\oplus_{q=0}^\infty \mathbb{H}_q$.

\begin{preremark}\label{chaosintegral}
In the case where $\hilbert=\Lspa^2_\mu$, for any $F\in \Lspa^2(\Omega)$,
\begin{align}\label{chaosrep0}
F=\sum_{q=0}^\infty I_q(f_q),
\end{align}
where $I_q$ is the $q^{\text{th}}$ multiple stochastic integral, $f_0=\mean{F}$, $I_0$ is the identity mapping on constants and $f_q\in \Lspa^2_{\mu^{\otimes q}}$ are symmetric functions uniquely determined by $F$.
\end{preremark}
Let $\mathcal{S}$ be the class of smooth random variables, i.e., if $F\in\mathcal{S}$ then there exists a function $\phi\in C^\infty(\RR^n)$ such that $\frac{\partial^k \phi}{\partial x_i^k}(x)$ has polynomial growth for all $k=0,1,2,3,\dots$ and $F=\phi(W(h_1),\dots,W(h_n))$, $h_i\in\hilbert$. The Malliavin derivative of $F\in \mathcal{S}$ with respect to $W$ is the element of $\Lspa^2(\Omega,\hilbert)$ defined as
\begin{align}\label{malliavinderivative}
\boldsymbol{D}F=\sum_{i=1}^n\frac{\partial \phi}{\partial x_i}\bigl(W(h_1),\dots,W(h_n)\bigr)h_i.
\end{align}
In particular $\boldsymbol{D}W(h)=h$ for every $h\in\hilbert$. Notice that in this particular case we have an explicit relation between the covariance of $W$ and the inner product of the Malliavin derivate, $\text{Cov}[W(h_1)W(h_2)]=E[W(h_1)W(h_2)]=\Hprod{h_1}{h_2}=\Hprod{\boldsymbol{D}W(h_1)}{\boldsymbol{D}W(h_2)}$.
\begin{preremark}\label{hilbertderivative}
In the case of a centered stationary Gaussian process, $X_t$, the Hilbert space can be chosen in the following way:\\
Consider the inner product $\Hprod{1_{[0,t]}}{1_{[0,s]}}=\text{Cov}[X_tX_s]=C(t-s)$ and take the Hilbert space $\hilbert$ as the closure of the set of step functions on $\RR$ with respect to this inner product. With this Hilbert space one concludes that $X_t=W(1_{[0,t]})$ and $\boldsymbol{D}X_t=1_{[0,t]}$ where $\boldsymbol{D}$ is the Malliavin derivative.
\end{preremark}
Since the Malliavin derivative satisfies the chain rule, we have $\boldsymbol{D}f(F)=f'(F)\boldsymbol{D}F$, for any $f:\RR\rightarrow\RR$ of class $\mathcal{C}^1$ with bounded derivative (which is also true for functions which are only a.e. differentiable, but with the assumption that $F$ is absolutely continous). The iterated Malliavin derivative, denoted by $\boldsymbol{D}^i$, can be define recursively. For $k\geq1$ and $p\geq 1$, $\mathbb{D}^{k,p}$ denotes the closure of $\mathcal{S}$ with respect to the norm $\left\|\cdot\right\|_{k,p}$ defined by
$$\left\|F\right\|_{k,p}^p=\mean{\abs{F}^p}+\sum_{i=1}^k\mean{\left\|\boldsymbol{D}^iF\right\|^p_{\hilbert^{\otimes i}}}.$$
Consider now an orthonormal system in $\hilbert$ denoted by $\{e_k\mid k\geq1\}$. Then, given elements $\phi\in\hilbert^{\otimes k_1},\psi\in\hilbert^{\otimes k_2}$, the contraction of order $r\leq\min\{k_1,k_2\}$ is the element of $\hilbert^{\otimes (k_1+k_2-2r)}$ defined by
$$\phi\otimes_r\psi=\sum_{i_1,\dots,i_r=1}^\infty \left\langle \phi,e_{i_1}\otimes\cdots e_{i_r}\right\rangle_{\hilbert^{\otimes r}}\left\langle \psi,e_{i_1}\otimes\cdots e_{i_r}\right\rangle_{\hilbert^{\otimes r}}.$$
In particular, $\phi\otimes_r\psi=\left\langle \phi,\psi\right\rangle_{\hilbert^{\otimes r}}$ when $k_1=k_2=r$.
\begin{preremark}\label{prodwiener}
Again, in the white noise framework (when $\hilbert=\Lspa^2_\mu$), for symmetric functions $\phi\in \Lspa^2_{\mu^{
\otimes k_1}}$, $\psi\in \Lspa^2_{\mu^{\otimes k_2}}$, the contraction is given by the integration of the first $r$ variables, i.e., $\phi\otimes_r\psi=\Lqprod{\phi}{\psi}{\mu^{\otimes r}}$. Also, we have a formula for the product of stochastic integrals:\footnote{$a\wedge b = \min\{a,b\}$ and $a\vee b = \max\{a,b\}$} $I_p(f)I_q(g)=\sum_{r=0}^{p\wedge q}r! {p \choose r}{q \choose r}I_{p+q-2r}(f\otimes_r g)$.
\end{preremark}

Define the divergence operator $\delta$ as the adjoint of the operator $\boldsymbol{D}$, so if $F\in\Dom\delta$ then $\delta(F)\in \Lspa^2(\Omega)$ and $\mean{\delta(F)G}=\mean{\Hprod{\boldsymbol{D}G}{F}}$.
\begin{preremark}\label{skorohod1}
When $\hilbert=\Lspa^2_\mu$, the divergence operator $\delta$ is called the Skorohod integral. It is a creation operator in the sense that for all $F\in\Dom\delta\subset \Lspa^2_{\mu\times\mathbb{P}}(T\times\Omega)$ with chaos representation $F(t)=\sum_{q=0}^\infty I_q\bigl(f_q(t,\cdot)\bigr)$ ($f_q\in \Lspa^2_{\mu^{\otimes(q+1)}}$ are symmetric functions in the last $q$ variables), $\delta(F)=\sum_{q=0}^\infty I_{q+1}(\widetilde{f}_q)$.\footnote{$\widetilde{f}$ is the symmetrization of $f$, i.e., $\tilde{f}(z_1,\dots,z_q)=\frac{1}{q!}\sum_\sigma f(z_{\sigma(1)},\dots,z_{\sigma(q)})$}
\end{preremark}
For all $F\in \Lspa^2(\Omega)$ denote by $J_qF$ the projection of $F$ in the $q^{\text{th}}$ chaos. Then, the Ornstein-Uhlenbeck semigroup is the family of contraction operators $\{T_t\ \mid t\geq0\}$ on $\Lspa^2(\Omega)$ defined by $T_tF=\sum_{q=0}^\infty e^{-qt}J_qF$. Using Mehler's formula we can find an equivalence between Mehler's semigroup and the O-U semigroup. More formally, take $W'$ as an independent copy of $W$ defining $(W,W')$ on the product probability space $(\Omega\times\Omega,\mathcal{F}\otimes\mathcal{F}',\mathbb{P}\times\mathbb{P})$. Each $F\in \Lspa^2(\Omega)$ can be regarded as measurable map $F(W)$ from $\RR^{\hilbert}$ to $\RR$ determined $\mathbb{P}\circ W^{-1}$-a.s. such that $T_tF=\mathbb{E}'\bigl[F\bigl(e^{-t}W+\sqrt{1-e^{-2t}}W'\bigr)\bigr]$. The infinitesimal generator for this semigroup (denoted by $L$) is given by $LF=\sum_{q=0}^\infty-qJ_qF$ and $\Dom L=\mathbb{D}^{2,2}= \Dom (\delta\boldsymbol{D})$. It can be proved that, for $F\in\Dom L$, $\delta\boldsymbol{D}F=-LF$. The pseudo-inverse of this operator (denoted by $L^{-1}$) is given by $L^{-1}F=\sum_{q=1}^\infty\frac{-1}{q}J_qF$, and is such that $L^{-1}F\in\Dom L$ and $LL^{-1}F=F-\mean{F}$ for any $F\in \Lspa^2(\Omega)$.

The use of Hermite polynomials is extremely important in this setting. The relationship between Hermite polynomials and Gaussian random variables is the following. Let $Z_1,Z_2$ be two random variables with joint Gaussian distribution such that $\mean{Z_1^2}=\mean{Z_2^2}=1$ and $\mean{Z_1}=\mean{Z_2}=0$. Then for all $q,p\geq0$,
\begin{align}\label{normalhermite}
\mean{H_p(Z_1)H_q(Z_2)}=
\begin{cases}
q!\bigl(\mean{Z_1Z_2}\bigr)^q & \text{if } q=p\\
0 & \text{if } q\neq p
\end{cases}.
\end{align}

On the other hand, one can expand a $\mathcal{C}^2$ function $f:\RR\rightarrow\RR$ in terms of Hermite polynomials, that is,
\begin{align}\label{decomp}
f(x)=\mean{f(Z)}+\sum_{q=1}^\infty c_qH_q(x),
\end{align}
where the real numbers $c_q$ are given by $c_qq!=\mean{f(Z)H_q(Z)}$ and $Z\sim\mathcal{N}(0,1)$.
\begin{preremark}\label{hermiteintegral}
Notice that in the white noise case we have the relationship $H_q\bigl(W(h)\bigr)=\int_{T^q}h^{\otimes q}dW_{t_1}\cdots dW_{t_q}=I_q\bigl(h^{\otimes q}\bigr)$ so the decomposition (\ref{decomp}) of $f(W(h))$ can be regarded as
$$f(W(h))=\sum_{q=0}^\infty c_qI_q\bigl(h^{\otimes q}\bigr).$$
\end{preremark}

With (\ref{normalhermite}) and (\ref{decomp}) we are able to compute the covariance of a real function $f$ in the following way,
\begin{align}\label{covf}
\text{Cov}[f(Z_1),f(Z_2)]=\sum_{p,q=1}^\infty c_pc_q\mean{H_p(Z_1)H_q(Z_2)}=\sum_{q=1}^\infty c_q^2q!\bigl(\mean{Z_1Z_2}\bigr)^q.
\end{align}

\subsection{Malliavin calculus on Wiener-Poisson space}

Let $\hilbert=\Lspa^2_\mu$. Assume there is a complete probability space $(\Omega,\mathcal{F},\mathbb{P})$ where $\Levyp_t$ is a cadlag, centered, L\'evy process: $\Levyp_t$ has stationary and independent increments, is continuous in probability and $\Levyp_0=0$, with $\mean{\Levyp_1^2}<\infty$. At the risk of causing some confusion, $\mathcal{F}$ denotes the filtration generated by $\Levyp_t$ completed with the null sets of the above filtration, and work on the space $(\Omega,\mathcal{F},\mathbb{P})$. Assume this process is represented by the triplet $(0,\sigma^2,\nu)$, where $\nu$ is the L\'evy measure such that $d\mu(t,x)=\sigma^2dt\delta_0(x)+x^2dtd\nu(x)$ and $\int_\RR x^2\,d\nu(x)<\infty$. This process can be represented as
$$\Levyp_t=\sigma W_t + \int_0^t\int_{\RR} x\,d\widetilde{N}(s,x),$$
where $W_t$ is a standard Brownian motion, $\sigma\geq0$ and $\widetilde{N}$ is the compensated jump measure. A fuller exposition on L\'evy processes can be found in \cite{Applebaum} and \cite{Sato}. This process is extended to a random measure $M$, which is used to construct (in an analogous way to the It$\hat{\text{o}}$ integral construction) an integral on the step functions, and then by linearity and continuity it is extended to $\Lspa^2\bigl((\RR^+\times\RR)^q,\mathcal{B}(\RR^+\times\RR)^q,\mu^{\otimes q}\bigr)$ and denoted by $I_q$. This integral satisfy the following properties:
\begin{enumerate}

\item $I_q(f)=I_q(\tilde{f})$.

\item $I_q(af+bg)=aI_q(f)+bI_q(g)$ \hspace{1cm} ($a,b\in\RR$).

\item $\mean{I_p(f)I_q(g)}=q!\int_{(\RR^+\times\RR)^q}\tilde{f}\tilde{g}\,d\mu^{\otimes q}1_{\{q=p\}}$.

\end{enumerate}
These properties are stated in \cite{Josep} and their proof can be found in \cite{Ito}. We have a product formula in this framework which is similar to the one in Remark \ref{prodwiener} but with extra terms coming from the Poisson integration part. A product formula for the pure jump framework can be found in \cite{Peccati}. Before stating the formula, we need to define a general version of the contraction. Let $\phi\in \Lspa^2_{\mu^{\otimes k_1}}$ and $\psi\in \Lspa^2_{\mu^{\otimes k_2}}$ be symmetric functions. Then the general contraction of order $r\leq\min\{k_1,k_2\}$ and $s\leq\min\{k_1,k_2\}-r$ is given by the integration of the first $r$ variables and the ``sharing'' of the following $s$ variables, i.e., $\phi\otimes_r^s\psi=\prod_{i=1}^sz_{2i}\Lqprod{\phi(\cdot,z,x)}{\psi(\cdot,z,y)}{\mu^{\otimes r}}$, where $z\in(\RR^2)^s$ and $(x,y)\in(\RR^2)^{k_1-r-s}\times(\RR^2)^{k_2-r-s}$. Now the product formula can be stated as follows. If $\abs{f}\otimes_r^s\abs{g}\in \Lspa^2_{\mu^{p+q-2r-s}}$ for $0\leq r\leq \min\{p,q\}$ and $0\leq s\leq \min\{p,q\}-r$, then
\begin{align}\label{prodint}
I_p(f)I_q(g)=\sum_{r=0}^{p\wedge q}\sum_{s=0}^{p\wedge q-r}r!s! {p \choose r}{q \choose r}{p-r \choose s}{q-r \choose s}I_{p+q-2r-s}(f\otimes_r^s g).
\end{align}
The proof of this product formula can be found in \cite{Lee}.
\begin{preremark}
In the general contraction formula $z_{2i}$ is the size of the jump and $z_{2i-1}$ is the time when that jump occurs $\bigl(z=(z_1,z_2,\dots,z_{2s-1},z_{2s})\bigr)$. If we only have the Wiener part, the factor $z_{2i}$ would be zero unless $s=0$, and then we obtain the contraction defined in the Wiener space. Similarly, when the terms where $s\neq0$ are zero, the formula (\ref{prodint}) reduces to that in Remark \ref{prodwiener}
\end{preremark}
It$\hat{\text{o}}$ has also proved \cite[Theorem 2]{Ito} that for all $F\in \Lspa^2(\Omega):=\Lspa^2(\Omega,\mathcal{F},\mathbb{P})$, we have,
\begin{align}\label{chaosrep1}
F=\sum_{q=0}^\infty I_q(f_q),\hspace{1cm} f_q\in \Lspa^2_{\mu^{\otimes q}}:=\Lspa^2\bigl(\RR^+\times\RR)^q,\mathcal{B}(\RR^+\times\RR)^q,\mu^{\otimes q}\bigr),
\end{align}
and that this representation is unique if the $f_q$'s are symmetric functions. From this chaotic representation we can define the annihilation operators and creation operators, the former will be the Malliavin derivatives and the latter will be the Skorohod integrals. In this way define 
$\Dom\boldsymbol{D}$ as the set of functionals $F\in \Lspa^2(\Omega)$ represented as in (\ref{chaosrep1}) such that $\sum_{q=1}^\infty qq!\Lqnorm{f_q}{\mu^{\otimes q}}^2<\infty$. For $F\in\Dom\boldsymbol{D}$ the Malliavin derivative of F is the stochastic process given by 
\begin{align}\label{malliavinderivative2}
\boldsymbol{D}_zF=\sum_{q=0}^\infty qI_{q-1}(f_q(z,\cdot)),\hspace{1cm} z\in\RR^+\times\RR,\ f_q \text{ symmetric}.
\end{align}
If we define the inner product as $\Lqprod{f}{g}{\mu}=\int_{\RR^+\times\RR}f(z)g(z) d\mu(z)$, then $\Dom\boldsymbol{D}$ is a Hilbert space with the inner product $\left\langle F,G\right\rangle=\mean{FG}+\mean{\Lqprod{\boldsymbol{D}_zF}{\boldsymbol{D}_zG}{\mu}}$. We can embed 
$\Dom\boldsymbol{D}$ in two spaces $\Dom\boldsymbol{D}^0$ and $\Dom\boldsymbol{D}^J$. $\Dom\boldsymbol{D}^0$ is defined as the set of all functionals $F\in \Lspa^2(\Omega)$ with representation given as in (\ref{chaosrep1}) such that $\sum_{q=1}^\infty qq!\int_{\RR^+}\Lqnorm{f_q\bigl((t,0),\cdot\bigr)}{\mu^{\otimes(q-1)}}^2\hspace{-.8cm}dt<\infty$, while $\Dom\boldsymbol{D}^J$ is defined as the respective functionals satisfying $\sum_{q=1}^\infty qq!\int_{\RR^+\times\RR_0}\Lqnorm{f_q(z,\cdot)}{\mu^{\otimes(q-1)}}^2\hspace{-.8cm}d\mu(z)<\infty$; hence 
$\Dom\boldsymbol{D} = \Dom\boldsymbol{D}^0\cap\Dom\boldsymbol{D}^J$. We can now rewrite (due to the independence of $W$ and $\widetilde{N}$) $\Omega$ as the cross product $\Omega_W\times\Omega_{\widetilde{N}}$.
\begin{itemize}

\item {\bf Derivative $\boldsymbol{D}_{t,0}$}\\
This derivative can be interpreted as the derivative with respect to the Brownian motion part. Using the isometry $\Lspa^2(\Omega)\simeq \Lspa^2\bigl(\Omega_W;\Lspa^2(\Omega_{\widetilde{N}})\bigr)$ (with $\Omega=\Omega_W\times\Omega_{\widetilde{N}}$), we can define a Malliavin derivative as we did in the Wiener case but using the $\Lspa^2(\Omega_{\widetilde{N}})$-valued smooth random variables $S_{\widetilde{N}}$, that is, for the functionals of the form $F=\sum_{i=1}^nG_iH_i$, where $G_i\in \mathcal{S}$ and $H_i\in \Lspa^2(\Omega_{\widetilde{N}})$. Then, this derivative will be $\boldsymbol{D}^WF=\sum_{i=1}^n(\boldsymbol{D}^WG_i)H_i$ and this $\boldsymbol{D}^WG_i$ is the derivative defined in (\ref{malliavinderivative}). This definition is extended (see \cite{Josep}) to a subspace 
$\Dom\boldsymbol{D}^W\subset \Dom\boldsymbol{D}^0$ and for $F\in\Dom\boldsymbol{D}^W$,
\begin{align}\label{derivative0}
\boldsymbol{D}_{t,0}F=\frac{1}{\sigma}\boldsymbol{D}_t^WF.
\end{align}
Furthermore, we also have a chain rule result for functionals of the form $F=f(G,H)\in \Lspa^2(\Omega)$ with $G\in\Dom\boldsymbol{D}^W$, $H\in \Lspa^2(\Omega_{\widetilde{N}})$ and $f(x,y)$ continously differentiable in the variable $x$ with bounded partial derivative. We conclude that 
$F\in\Dom\boldsymbol{D}^0$ and
\begin{align}\label{chainrule0}
\boldsymbol{D}_{t,0}F=\frac{1}{\sigma}\frac{\partial f}{\partial x}(G,H)\boldsymbol{D}_t^WG.
\end{align}
This is also true (as in the Wiener space case) for functions which are a.e. differentiable but with the restriction that $G$ satisfies an absolutely continuous law.

\item {\bf Derivative $\boldsymbol{D}_z$ ($z\neq(t,0)$)}\\
This derivate has been shown to be a quotient operator $\Psi_{t,x}$, that is, if $\mean{\dint_{\RR^+\times\RR_0}(\Psi_zF)^2d\mu(z)}<\infty$ then $F\in\Dom\boldsymbol{D}^J$ and the Malliavin derivative with $z=(t,x)\neq(t,0)$ will be given by
\begin{align}\label{derivative1}
\boldsymbol{D}_zF=\Psi_{t,x}F:=\frac{F(\omega_{t,x})-F(\omega)}{x}.
\end{align}
The idea is to introduce a jump of size $x$ at moment $t$. See \cite{Josep0} for a complete contruction on the canonical space in which this is developed, and \cite{Josep} for a quick explanation on how to introduce a jump at moment $t$ and the conditions on the $\omega$'s.

\end{itemize}

\bigskip

If $FG\in \Dom\MDer^W\cap\Dom\MDer^J$ such that $\mean{\dint_{\RR^+\times\RR_0}\bigl(\MDer_z(FG)\bigr)^2\,d\mu(z)}<\infty$, then (using the fact that the jump $x$ is zero for $\boldsymbol{D}_{t,0}$) it is easy to conclude by direct calculation that (see \cite[Proposition 5.1.]{Josep0} and \cite[Exercise 1.2.12]{Nualart}),
\begin{align}\label{productrule1}
\boldsymbol{D}_z(FG)=\boldsymbol{D}_zF\cdot G+F\cdot\boldsymbol{D}_zG+x\cdot\boldsymbol{D}_zF\cdot\boldsymbol{D}_zG,
\end{align}
for all $z=(t,x)\in\RR^+\times\RR$. Moreover, if $\Hprod{\MDer F}{\MDer G}\in\Dom\MDer^{W,J}$, the following product rule also holds,
\begin{align}\label{productrule2}
\boldsymbol{D}_z\Hprod{\MDer F}{\MDer G}=\Hprod{\MDer^2_z F}{\MDer G}+\Hprod{\MDer F}{\MDer^2_z G}+\Hprod{x\MDer F^2_z}{\MDer^2_z G}.
\end{align}
The case $\nu\equiv0$ was shown in \cite[Lemma 3.2]{Nourdin}, and the case $\nu\not\equiv0$ follows directly from
\begin{align*}
\boldsymbol{D}_z\Hprod{\MDer F}{\MDer G}&=\boldsymbol{D}_z\int_{\RR^+\times\RR}\MDer_uF\MDer_uG\,d\mu(u)=\int_{\RR^+\times\RR}\MDer_z(\MDer_uF\MDer_uG)\,d\mu(u)\\
&=\int_{\RR^+\times\RR}\boldsymbol{D}^2_{z,u}F\cdot \MDer_uG+\MDer_uF\cdot\boldsymbol{D}^2_{z,u}G+x\cdot\boldsymbol{D}^2_{z,u}F\cdot\boldsymbol{D}^2_{z,u}G\,d\mu(u).
\end{align*}

\bigskip

In the same way, consider the chaotic decomposition $F(z)=\sum_{q=0}^\infty I_q\bigl(f_q(z,\cdot)\bigr)$, with $f\in \Lspa^2_{\mu^{\otimes q}}$ symmetric with respect to the last $q$ variables. If $\sum_{q=0}^\infty (q+1)!\Lqnorm{f_q}{\mu^{\otimes(q+1)}}^2\hspace{-.7cm}<\infty$ then we say that $F\in\Dom\delta$. Now we can define the Skorohod integral of $F\in\Dom\delta$ by
\begin{align}\label{skorohod2}
\delta(F)=\sum_{q=0}^\infty I_{q+1}(\widetilde{f}_q)\in \Lspa^2(\Omega).
\end{align}
This operator is the adjoint of the operator $\boldsymbol{D}_z$, so $\mean{\delta(F)G}=\mean{\Lqprod{F(z)}{\boldsymbol{D}_zG}{\mu}}$ for all $G\in\Dom\boldsymbol{D}$. Denote by $\mathbb{L}^{1,2}$ the set of elements $F\in \Lspa^2_{\mu\otimes\mathbb{P}}\bigl(\RR^+\times\RR\times\Omega\bigr)$ such that $\sum_{q=1}^\infty qq!\Lqnorm{f_q}{\mu^{\otimes q}}^2\hspace{-.4cm}<\infty$. For all $F\in\mathbb{L}^{1,2}\subset\Dom\delta$ we have that $F(z)\in\Dom\boldsymbol{D}$, $\forall \ z \ \mu-a.e.$ and that $\boldsymbol{D}_\cdot F(\cdot)\in \Lspa^2_{\mu^{\otimes2}\times\mathbb{P}}\bigl((\RR^+\times\RR)^2\times\Omega\bigr)$.

\bigskip

Finally, the definitions of the Ornstein-Uhlenbeck semigroup $T_t$ and its infinitesimal generator $L$ are parallel to the ones in the Wiener space case. Basically, all we need to define is the Malliavin derivative and the Skorohod integral, that is, we can just define $L:=-\delta\boldsymbol{D}$. With this definition we obtain that for $F\in \Lspa^2(\Omega)$ with chaotic representation (\ref{chaosrep1}), $LF=\sum_{q=1}^\infty -qI_q(f_q)$ and $T_tF=\sum_{q=0}^\infty e^{-qt}I_q(f_q)$. In the same way, the pseudo-inverse is given by $L^{-1}F=\sum_{q=1}^\infty\frac{-1}{q}I_q(f_q)$ and $LL^{-1}F=F-E[F].$

\section{Main theorems}

The first tool needed is a generalized version of the chain rule in the framework of the Wiener-Poisson space. But first we need to define a suitable subset of $\Dom\boldsymbol{D}$ where \eqref{derivative0} and \eqref{derivative1} remains valid and the chain rule for a general random variable in the Wiener-Poisson space can be implemented. Consider the set
$$\Ssmooth^{W,J}:=\left\{\left.\sum_{i=1}^nG_iH_i\ \right|\ G_i\in \Ssmooth; H_i\in \Dom\boldsymbol{D}^J \text{ such that } \mean{\dint_{\RR^+\times\RR_0}(\Psi_zH_i)^2\,d\mu(z)}<\infty\right\}.$$
With the inner product given by
$$\left\langle F,G\right\rangle_{W,J}:=\mean{FG}+\mean{\Lqprod{\boldsymbol{D}_zF}{\boldsymbol{D}_zG}{\mu}}=\meanW{\meanJ{FG}}+\meanW{\meanJ{\Lqprod{\boldsymbol{D}_zF}{\boldsymbol{D}_zG}{\mu}}},$$
let $\Dom\MDer^{W,J}$ be the closure of $\Ssmooth^{W,J}$ with respect to the implicit norm (see \cite[Remarks 2 and 3 - page 31]{Nualart} for the properties of this space). Clearly, $\Dom\MDer^{W,J}\subset\Dom\MDer^W\cap\Dom\MDer^J\subset\Dom\boldsymbol{D}$, and for all $F\in\Dom\MDer^{W,J}$ formulas \eqref{derivative0} and \eqref{derivative1} hold. In the following proposition, for an $\RR^d$-valued random variable $F=\bigl(F^{(1)},\dots,F^{(d)}\bigr)$, $\MDer_zF$ must be understood as the vector $\bigl(\MDer_zF^{(1)},\dots,\MDer_zF^{(d)}\bigr)$.

\begin{prop}\label{chainrulemultvar}
(General Chain Rules)\\
Fix $k\geq2$. Let $f:\RR^d\to\RR$ be a $\mathcal{C}^{k-1}(\RR^d)$ with bounded gradient, that is, $\bigl\|\abs{\nabla f}_{\RR^d}\bigr\|_\infty<\infty$, such that $\partial^\alpha f$ is a.e. differentiable for any multi-index $\alpha$ such that $\abs{\alpha}=k-1$. Let the random vector $F=\bigl(F^{(1)},\dots,F^{(d)}\bigr)$ be such that $F^{(i)}\in\Dom\MDer^{W,J}$ for all $i$. Then $f(F)\in\Dom\MDer^{W,J}$ and
\begin{align}\label{chainrulemultvar1f}
\MDer_zf(F)=&\sum_{\abs{\alpha}=1}^{k-1}\frac{\partial^\alpha f(F)}{\alpha!}x^{\abs{\alpha}-1}(\MDer_z F)^\alpha\\
&\quad+\frac{1}{k!}\sum_{i_1,\dots,i_k=1}^dx^{k-1}\prod_{r=1}^k(\MDer_z F^{(i_r)})\int_0^1(1-t)^{k-1}\partial_{i_1,\dots,i_k}f(F+tx\MDer_zF)\,dt.\nonumber
\end{align}
Furthermore, if $f\in\mathcal{C}^{k}(\RR^d)$ then
\begin{align}\label{chainrulemultvar2f}
\MDer_zf(F)=\sum_{\abs{\alpha}=1}^{k-1}\frac{\partial^\alpha f(F)}{\alpha!}x^{\abs{\alpha}-1}(\MDer_z F)^\alpha+\sum_{\abs{\alpha}=k}\frac{\partial^\alpha f(F+\theta_zx\MDer_zF)}{\alpha!}x^{k-1}(\MDer_z F)^\alpha,
\end{align}
for some function $\theta_z\in(0,1)$ depending on $z$, $F$ and $\MDer_zF$.
\end{prop}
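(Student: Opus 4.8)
The plan is to reduce everything to the one–variable case $d=1$ first, where the structure is cleanest, and then upgrade to $\RR^d$ by a multi-index bookkeeping argument that mirrors the multivariate Taylor expansion. Throughout, the underlying identity one is exploiting is the difference-quotient formula \eqref{derivative1}: for $z=(t,x)$ with $x\neq 0$ one has $\MDer_z f(F)=\frac{f(F(\omega_{t,x}))-f(F(\omega))}{x}$, and since $\MDer_z$ acts on each coordinate by the same difference quotient, $F(\omega_{t,x})=F(\omega)+x\,\MDer_z F(\omega)$ componentwise. So formally $\MDer_z f(F)=\frac{1}{x}\bigl(f(F+x\MDer_zF)-f(F)\bigr)$, and the content of the proposition is exactly the Taylor expansion of $g(t):=f(F+tx\MDer_zF)$ around $t=0$ evaluated at $t=1$: with the integral (Lagrange) form of the remainder one gets \eqref{chainrulemultvar1f} (needing only $\mathcal{C}^{k-1}$ plus a.e.-differentiable top derivatives, since the remainder integral only needs $\partial^\alpha f$ with $|\alpha|=k$ to exist a.e. along the segment), and with the mean-value form of the remainder one gets \eqref{chainrulemultvar2f} (needing $\mathcal{C}^k$ so the pointwise value $\partial^\alpha f(F+\theta_z x\MDer_z F)$ makes sense). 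The coefficients $\frac{1}{\alpha!}x^{|\alpha|-1}(\MDer_zF)^\alpha$ and the $\frac{1}{k!}\int_0^1(1-t)^{k-1}$ weight are precisely the standard multivariate Taylor coefficients, with one factor of $x$ absorbed by the division by $x$ in \eqref{derivative1}.

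First I would handle the "smooth and bounded" model case: assume $f\in\mathcal{C}^k(\RR^d)$ with all derivatives bounded and $F^{(i)}\in\Ssmooth^{W,J}$. Here both \eqref{chainrulemultvar1f} and \eqref{chainrulemultvar2f} follow by writing out Taylor's theorem with integral remainder for $g(t)=f(F+tx\MDer_zF)$, dividing by $x$, and recognizing the difference quotient \eqref{derivative1}. One also needs the companion statement on the Wiener direction, namely that \eqref{chainrule0} (the ordinary chain rule $\MDer_{t,0}f(F)=\frac1\sigma\sum_i \partial_i f(F)\,\MDer^W_t F^{(i)}$) is consistent with formulas \eqref{chainrulemultvar1f}–\eqref{chainrulemultvar2f} in the limit $x\to 0$: every term with $|\alpha|\geq 2$ carries a factor $x^{|\alpha|-1}\to 0$, the $|\alpha|=1$ sum reduces to the gradient term, and the remainder term vanishes because it carries $x^{k-1}$ with $k\geq2$. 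So on $\Ssmooth^{W,J}$ the two derivatives $\MDer_{t,0}$ and $\MDer_z$ are given consistently by these formulas, and hence $f(F)\in\Ssmooth^{W,J}$ (it is, by construction, a sum of a smooth random variable times an element handled by $\Dom\MDer^J$-calculus, using the product rule \eqref{productrule1} to expand $f(F)$).

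Next I would remove the boundedness of the higher derivatives, then remove smoothness. To drop boundedness on $\partial^\alpha f$ for $|\alpha|\geq 2$ while keeping $\|\,|\nabla f|\,\|_\infty<\infty$: truncate $f$ by multiplying the higher-order Taylor data by smooth cutoffs, or more directly convolve with a mollifier $\rho_\varepsilon$ to get $f_\varepsilon=f*\rho_\varepsilon\in\mathcal{C}^\infty$ with $\|\,|\nabla f_\varepsilon|\,\|_\infty\leq\|\,|\nabla f|\,\|_\infty$, apply the model case to $f_\varepsilon$, and pass to the limit $\varepsilon\to0$: $f_\varepsilon(F)\to f(F)$ and $\MDer_z f_\varepsilon(F)\to \MDer_z f(F)$ in the relevant $\Lspa^2$-norms because the difference quotient $\frac{f_\varepsilon(F+x\MDer_zF)-f_\varepsilon(F)}{x}$ is uniformly bounded by $\|\,|\nabla f|\,\|_\infty |\MDer_z F|$ (a Lipschitz bound, giving a dominating function) and converges pointwise a.e.; dominated convergence then gives closedness of $\MDer_z$, so $f(F)\in\Dom\MDer^{W,J}$ with the stated formula (the right-hand sides of \eqref{chainrulemultvar1f}–\eqref{chainrulemultvar2f} converging termwise, again with domination coming from the overall Lipschitz-in-the-segment bound). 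Finally, to pass from $F^{(i)}\in\Ssmooth^{W,J}$ to $F^{(i)}\in\Dom\MDer^{W,J}$ one approximates each $F^{(i)}$ in $\Dom\MDer^{W,J}$-norm by $F^{(i)}_n\in\Ssmooth^{W,J}$, applies the already-established formula to $F_n=(F_n^{(1)},\dots,F_n^{(d)})$, and passes to the limit using the same Lipschitz/domination estimate plus the closedness of $\MDer^{W,J}$.

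The main obstacle is the limiting argument in the last two steps: one must produce an honest $\Lspa^2(\mu\otimes\mathbb{P})$-dominating function for the difference quotients $\MDer_z f_\varepsilon(F_n)$ uniformly in $\varepsilon$ and $n$, so that dominated convergence applies to conclude both $\Lspa^2$-convergence of $\MDer_z f(F)$ and—crucially—that the limit lies in $\Dom\MDer^{W,J}$ rather than just in $\Dom\MDer^{W}\cap\Dom\MDer^J$. The bounded-gradient hypothesis $\|\,|\nabla f|_{\RR^d}\,\|_\infty<\infty$ is exactly what makes this work, via $|\MDer_z f(F)|=\bigl|\tfrac{f(F+x\MDer_zF)-f(F)}{x}\bigr|\leq \|\,|\nabla f|\,\|_\infty\,|\MDer_z F|_{\RR^d}$ together with $F^{(i)}\in\Dom\MDer^{W,J}\Rightarrow \MDer_z F^{(i)}\in\Lspa^2(\mu\otimes\mathbb{P})$; a parallel bound controls each term on the right-hand sides. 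The rest—the Taylor expansion itself, the identification of coefficients, and the $x\to 0$ consistency with \eqref{chainrule0}—is routine calculus once the difference-quotient representation \eqref{derivative1} and the product rule \eqref{productrule1} are in hand.
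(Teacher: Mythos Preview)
Your treatment of the Poisson direction ($z=(t,x)$, $x\neq 0$) is correct and is exactly what the paper does: bound $|\Psi_z f(F)|\le \|\,|\nabla f|\,\|_\infty\sum_i|\MDer_z F^{(i)}|$ via the mean value theorem to get $f(F)\in\Dom\MDer^J$, then apply Taylor's theorem with integral remainder (respectively Lagrange remainder) to $y=F(\omega_z)$, $y_0=F(\omega)$, using $y-y_0=x\MDer_zF$. The mollification step $f_\varepsilon=f*\rho_\varepsilon$ is unnecessary here---Taylor's formula with integral remainder applies directly under the stated $\mathcal{C}^{k-1}$ plus a.e.-differentiability hypothesis, and the paper simply invokes it.

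The Wiener direction ($z=(t,0)$) is where your sketch is thin, and the paper does more work than you indicate. You assert that for $F\in(\Ssmooth^{W,J})^d$ one has $f(F)\in\Ssmooth^{W,J}$, but this is not automatic: if $F^{(i)}=\sum_k G_k^{(i)}H_k^{(i)}$ there is no reason $f(F)$ decomposes as a finite sum of (smooth Wiener functional)$\times$(Poisson functional). The paper handles this by introducing an auxiliary function $\widetilde f:\RR^{N_1+\cdots+N_d}\to\RR$ with $f(F)=\widetilde f(G)$, where $G$ collects all the $G_k^{(i)}$'s, and then computes $\MDer_{t,0}\widetilde f(G)$ via the known Wiener chain rule \eqref{chainrule0}, checking that $\partial_{i+k}\widetilde f(G)=\partial_i f(F)\,H_k^{(i)}$ so that the sums reassemble into $\sum_i\partial_i f(F)\MDer_{t,0}F^{(i)}$. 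For general $F\in\Dom\MDer^{W,J}$ the paper then approximates by $F_N\in(\Ssmooth^{W,J})^d$ as you suggest, but to \emph{identify} the limit derivative it uses a duality argument: it pairs $\MDer f(F)$ against test elements $G$ in a dense class $\Ssmooth^{W,J}_{\Lspa^2}\subset\Dom\delta^W$, rewrites $\langle\MDer f(F),G\rangle$ as $\mean{f(F)\delta^W(G)}$, passes to the limit in this scalar quantity, and reads off $\MDer f(F)=\nabla f(F)\cdot\MDer F$. Your ``closedness plus dominated convergence'' line would need the full sequence $\MDer f(F_N)$ to converge in $\Lspa^2$, not just to be bounded; the paper only extracts a convergent subsequence from boundedness and then uses duality to pin down the limit. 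Your ``consistency as $x\to 0$'' remark is a sanity check, not a proof of the $z=(t,0)$ formula.
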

\begin{proof}
This will be proven for the derivatives in the Wiener direction and the Poisson direction.\\

{\bf $\boldsymbol{\bullet\ }$ Case $\boldsymbol{z=(t,0):}$}\\
Consider first $F=\left(F^{(1)},\dots,F^{(d)}\right)$, where $F^{(i)}=\sum_{k=1}^{N_i}G_kH_k\in\Ssmooth^{W,J}$ for all $i$. Let $\widetilde{f}:\RR^{N_1+\cdots+N_d}\to\RR$ be such that $f(F)=\widetilde{f}(G)$, where $G=\left(G_1^{(1)},\dots,G_{N_1}^{(1)},\dots,G_1^{(d)},\dots,G_{N_{d}}^{(d)}\right)$. By formula \eqref{derivative0} and the chain rule in the Wiener space,
\begin{align*}
\MDer_{t,0}f(F)&=\frac{1}{\sigma}\MDer^W_t\widetilde{f}(G)=\frac{1}{\sigma}\sum_{i=1}^d\sum_{k=1}^{N_i}\frac{\partial \widetilde{f}}{\partial x_{i+k}}(G)\MDer^WG^{(i)}_k=\sum_{i=1}^d\sum_{k=1}^{N_i}\frac{\partial \widetilde{f}}{\partial x_{i+k}}(G)\MDer_{t,0}G^{(i)}_k.
\end{align*}
On the other hand, $\frac{\partial\widetilde{f}}{\partial x_{i+k}}(G)=\frac{\partial f}{\partial x_i}(F)H^{(i)}_k$. Hence,

\vspace{-.7cm}

$$\MDer_{t,0}f(F)=\sum_{i=1}^d\sum_{k=1}^{N_i}\frac{\partial f}{\partial x_i}(F)H^{(i)}_k\MDer_{t,0}G^{(i)}_k=\sum_{i=1}^d\frac{\partial f}{\partial x_i}(F)\overbrace{\MDer_{t,0}\sum_{k=1}^{N_i}H^{(i)}_kG^{(i)}_k}^{=\ \MDer_{t,0}F^{(i)}}.$$
Accordingly, $\MDer_{t,0}f(F)=\sum_{i=1}^d\frac{\partial f}{\partial x_i}(F)\MDer_{t,0}F^{(i)}$.

\bigskip

Consider now a general $F\in\Dom\MDer^{W,J}$, and take a sequence $\{F_N\}\subset\Ssmooth^{W,J}\times\cdots\times\Ssmooth^{W,J}$ converging in $\|\cdot\|_{W,J}$ to $F$, that is, $F^{(i)}_N\xrightarrow[N\to\infty]{\Lspa^2}F^{(i)}$ and $\MDer F^{(i)}_N\xrightarrow[N\to\infty]{\Lspa^2}\MDer F^{(i)}$ in the respective spaces. By the boundedness and continuity of $\nabla f$, along with the mean value theorem,
\begin{align*}
\left\| f(F)-f(F_N) \right\|_{\mathbb{L}^{2}(\Omega)}&=\left\| \nabla f\bigl(\alpha_{F,F_N}F+(1-\alpha_{F,F_N})F_N)\bigr)\cdot(F-F_N)\right\|_{\mathbb{L}^{2}(\Omega)}\\
&\leq \bigl\|\abs{\nabla f}_{\RR^d}\bigr\|_\infty\bigl\| \abs{F-F_N}_{\RR^d}\bigr\|_{\mathbb{L}^{2}(\Omega)}\leq \bigl\|\abs{\nabla f}_{\RR^d}\bigr\|_\infty\sum_{i=1}^d\left\| F^{(i)}-F^{(i)}_N\right\|_{\mathbb{L}^{2}(\Omega)}\arrowlim{N}{\infty}0.
\end{align*}
Since $\left\|\sum_{i=1}^d\frac{\partial f}{\partial x_i}(F_N)\MDer_{t,0}F^{(i)}_N\right\|_{\mathbb{L}^{2}(\Omega\times\RR^+)}\leq\bigl\|\abs{\nabla f}_{\RR^d}\bigr\|_\infty\left(\sum_{i=1}^d\mean{\left\| \MDer_{t,0}F^{(i)}\right\|_{\mathbb{L}^{2}(\RR^+)}}+K\right)<\infty$, for some constant $K$, then by the completeness of the space, a convergent subsequence exists. It is known from \cite[Remark 2 - page 31]{Nualart} that $\MDer_{t,0}$ is a closed operator from $\Dom\MDer^{W,J}\subset\Lspa^2\bigl(\Omega_W;\Lspa^2(\Omega_{\widetilde{N}})\bigr)\simeq \Lspa^2(\Omega_W\times\Omega_{\widetilde{N}})\simeq \Lspa^2(\Omega)$ into $\Lspa^2\bigl(\Omega_W;\Lspa^2(\RR^+,\mathcal{B}(\RR^+),dt)\otimes\Lspa^2(\Omega_{\widetilde{N}})\bigr)\simeq\Lspa^2\bigl(\Omega\times\RR^+,\mathcal{F}\otimes\mathcal{B}(\RR^+),\mathbb{P}\otimes dt\bigr)$. Therefore, $f(F)\in\Dom\MDer^W$.

\bigskip

Finally, consider the set $\Ssmooth^{W,J}_{\Lspa^2}:=\left\{\sum_{i=1}^nf_i(t)G_iH_i\ \mid G_i\in \Ssmooth; H_i\in \Dom\boldsymbol{D}^J; f_i\in\Lspa^2(\RR^+)\right\}$. By \cite[page 37]{Nualart}, it is known that $\Ssmooth^{W,J}_{\Lspa^2}\in \Dom\delta^W$. Hence, for all $G\in\Ssmooth^{W,J}_{\Lspa^2}$,
\begin{align*}
\mean{\left\langle \MDer f(F),G \right\rangle_{\Lspa^2(\Omega_{\widetilde{N}}\times\RR^+)}}&=\mean{f(F)\delta^W(G)}=\lim_{l\to\infty}\mean{f\bigl(F_{N_l}\bigr)\delta(G)}=\lim_{l\to\infty}\mean{\left\langle \MDer f\bigl(F_{N_l}\bigr),G \right\rangle_{\Lspa^2(\Omega_{\widetilde{N}}\times\RR^+)}}\\
&=\lim_{l\to\infty}\mean{\left\langle \nabla f\bigl(F_{N_l}\bigr)\cdot\MDer F_{N_l},G \right\rangle_{\Lspa^2(\Omega_{\widetilde{N}}\times\RR^+)}}=\mean{\left\langle \nabla f(F)\cdot\MDer F,G \right\rangle_{\Lspa^2(\Omega_{\widetilde{N}}\times\RR^+)}}\\
&=\mean{\left\langle \sum_{i=1}^d\frac{\partial f}{\partial x_i}(F)\MDer F^{(i)},G \right\rangle_{\Lspa^2(\Omega_{\widetilde{N}}\times\RR^+)}}.
\end{align*}
This shows that $\mean{\left\langle \MDer f(F)-\sum_{i=1}^d\frac{\partial f}{\partial x_i}(F)\MDer F^{(i)},G \right\rangle_{\Lspa^2(\Omega_{\widetilde{N}}\times\RR^+)}}=0$ for any $G\in\Ssmooth^{W,J}_{\Lspa^2}$. Since $\Ssmooth^{W,J}_{\Lspa^2}$ is dense in $\Lspa^2\bigl(\Omega_W;\Lspa^2(\RR^+,\mathcal{B}(\RR^+),dt)\otimes\Lspa^2(\Omega_{\widetilde{N}})\bigr)$, and $z=(t,0)$ it follows that
\begin{align*}
\MDer f(F)&= \sum_{i=1}^d\frac{\partial f}{\partial x_i}(F)\MDer F^{(i)}\\
&=\sum_{\abs{\alpha}=1}^{k-1}\frac{\partial^\alpha f(F)}{\alpha!}0^{\abs{\alpha}-1}(\MDer_z F)^\alpha+\frac{1}{k!}\sum_{i_1,\dots,i_k=1}^d0^{k-1}\prod_{r=1}^k(\MDer_z F^{(i_r)})\int_0^1(1-t)^{k-1}\partial_{i_1,\dots,i_k}f(F+t0\MDer_zF)\,dt.
\end{align*}
The convention $0^0=1$ is used in the last equality. The proof of (\ref{chainrulemultvar2f}) is analogous.

\bigskip

{\bf $\boldsymbol{\bullet\ }$ Case $\boldsymbol{z=(t,x)}$ with $\boldsymbol{x\neq0}$:}\\
Since $f$ is differentiable, then by the mean value theorem we obtain
\begin{align*}
\abs{\Psi_zf(F)}:&=\abs{\frac{f\bigl(F(\omega_{t,x})\bigr)-f\bigl(F(\omega)\bigr)}{x}}=\abs{\nabla f\bigl(\alpha_{F,z}F(\omega_{t,x})+(1-\alpha_{F,z})F)\bigr)\cdot\boldsymbol{D}_zF}\\
&\leq \abs{\nabla f\bigl(\alpha_{F,z}F(\omega_{t,x})+(1-\alpha_{F,z})F\bigr)}_{\RR^d}\sum_{i=1}^d\abs{\boldsymbol{D}_zF^{(i)}}_{\RR^d}.
\end{align*}
Therefore,
$$\mean{\dint_{\RR^+\times\RR_0}\bigl(\Psi_zf(F)\bigr)^2\,d\mu(z)}\leq \bigl\|\abs{\nabla f}_{\RR^d}\bigr\|_\infty\left(\sum_{i=1}^d\mean{\Lqnormmu{\boldsymbol{D}_zF^{(i)}}{\mu}^2}^{\frac{1}{2}}\right)^2 <\infty.$$
Hence, $f(F)\in\Dom\MDer^J$ and the Malliavin derivative is given by $\MDer_zf(F)=\Psi_zf(F)$. By Taylor's formula,
\begin{align*}
f(y)=&f(y_0)+\sum_{\abs{\alpha}=1}^{k-1}\frac{\partial^\alpha f(y_0)}{\alpha!}(y-y_0)^\alpha+\frac{1}{k!}\sum_{i_1,\dots,i_k=1}^d\prod_{r=1}^k\bigl(y^{(i_r)}-y_0^{(i_r)}\bigr)\int_0^1(1-t)^{k-1}\partial_{i_1,\dots,i_k} f(y_0+t(y-y_0))\,dt.
\end{align*}
Recalling that $y^{(i)}-y^{(i)}_0=F^{(i)}(\omega_z)-F^{(i)}(\omega)=x\boldsymbol{D}_zF^{(i)}$, if $y=F(\omega_z)$ and $y_0=F(\omega)$, and plugging in these values, the formula (\ref{chainrulemultvar1f}) is immediately verified.\\
The proof of (\ref{chainrulemultvar2f}) follows the same logic but with the following Taylor's formula:
$$f(y)=\sum_{\abs{\alpha}=1}^{k-1}\frac{\partial^\alpha f(y_0)}{\alpha!}\bigl(y-y_0\bigr)^\alpha+\sum_{\abs{\alpha}=k}\frac{\partial^\alpha f(y_0+\theta(y-y_0))}{\alpha!}\bigl(y-y_0\bigr)^\alpha,$$
where $\theta\in(0,1)$ depends on $y_0$ and $y$.
\end{proof}

\bigskip

This chain rule allows us to employ the so-called {\it ``Nourdin-Peccati analysis''}, which is stated in the following proposition.

\begin{prop}\label{NPintbypartsWP}
Let the random vectors $Z=(Z^{(1)},\dots,Z^{(d)})$ and $F=(F^{(1)},\dots,F^{(d)})$ be such that $Z^{(i)},F^{(i)}\in\Dom\MDer^{W,J}$, for all $i\in\{1,\dots,d\}$. Suppose that the function $f:\RR^d\to\RR$ satisfies that $\nabla f\in \mathcal{C}^2(\RR^d)$ with a bounded Hessian. Then,
\begin{align}\label{NPintbypartsformulaWP}
\mean{\Rinnprod{Z}{\nabla f(F)}{d}}&=\Rinnprod{\mean{Z}}{\mean{\nabla f(F)}}{d}+\mean{\HSinnprod{\Hess_f(F)}{g_{Z,F}(F)}}\nonumber\\
&\quad+\sum_{j=1}^d\mean{\Hprod{\sum_{\abs{\beta}=2}\frac{\partial^\beta\partial_jf(F+\theta_zx\MDer_zF)x(\MDer_zF)^\beta}{\beta!}}{-\MDer L^{-1}\bigl(Z^{(j)}-\mathbb{E}\bigl[Z^{(j)}\bigr]\bigr)}},
\end{align}
where $g_{Z,F}$ is the matrix $g_{Z,F}^{i,j}(x)=\mean{\left.\Hprod{\MDer F^{(i)}}{-\MDer L^{-1}\bigl(Z^{(j)}-\mathbb{E}\bigl[Z^{(j)}\bigr]\bigr)}\ \right|\ F=x}$, $\HSinnprod{\cdot}{\cdot}$ is the Hilbert-Schmidt inner product, and $\hilbert$ is the underlying Hilbert space. If $\nu\not\equiv0$ (the jump part is present), then $\hilbert=\Lspa^2_\mu$.
\end{prop}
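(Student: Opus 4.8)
The plan is to derive \eqref{NPintbypartsformulaWP} by combining the general chain rule of Proposition \ref{chainrulemultvar} (in the case $k=2$, since $\nabla f\in\mathcal{C}^2$) with the adjoint (integration-by-parts) relation $\mean{\delta(u)G}=\mean{\Lqprod{u(z)}{\MDer_zG}{\mu}}$ and the identity $LL^{-1}=\mathrm{id}-\mean{\cdot}$. First I would write, for each fixed $j$, $Z^{(j)}-\mean{Z^{(j)}}=LL^{-1}(Z^{(j)}-\mean{Z^{(j)}})=-\delta\MDer L^{-1}(Z^{(j)}-\mean{Z^{(j)}})$, so that
\begin{align*}
\mean{\bigl(Z^{(j)}-\mean{Z^{(j)}}\bigr)\partial_jf(F)}&=\mean{-\delta\bigl(\MDer L^{-1}(Z^{(j)}-\mean{Z^{(j)}})\bigr)\,\partial_jf(F)}\\
&=\mean{\Hprod{\MDer\bigl(\partial_jf(F)\bigr)}{-\MDer L^{-1}(Z^{(j)}-\mean{Z^{(j)}})}}.
\end{align*}
Summing over $j$ and adding back $\Rinnprod{\mean{Z}}{\mean{\nabla f(F)}}{d}$ recovers the left-hand side $\mean{\Rinnprod{Z}{\nabla f(F)}{d}}$. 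The admissibility of this step requires $\partial_jf(F)\in\Dom\MDer^{W,J}$ and $L^{-1}(Z^{(j)}-\mean{Z^{(j)}})\in\Dom\delta$; the former follows from Proposition \ref{chainrulemultvar} applied to $\partial_jf$ (which is $\mathcal{C}^2$, hence $\mathcal{C}^{k-1}$ with $k=3$, with bounded gradient after the standing assumptions — one should remark that boundedness of the Hessian of $\nabla f$ gives the needed a.e.\ differentiability of $\partial^\alpha(\partial_jf)$ for $|\alpha|=2$), and the latter from $Z^{(j)}\in\Dom\MDer^{W,J}\subset\Dom\MDer$ together with the characterization of $\Dom\delta$ via the chaos expansion.

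Next I would expand $\MDer_z\bigl(\partial_jf(F)\bigr)$ using \eqref{chainrulemultvar1f} with $k=2$ applied to the scalar function $\partial_jf$:
\begin{align*}
\MDer_z\bigl(\partial_jf(F)\bigr)=\sum_{i=1}^d\partial_i\partial_jf(F)\,\MDer_zF^{(i)}+\sum_{|\beta|=2}\frac{\partial^\beta\partial_jf(F+\theta_zx\MDer_zF)}{\beta!}\,x\,(\MDer_zF)^\beta,
\end{align*}
where the second sum is the Poisson-direction remainder ($x=0$ kills it in the Wiener direction, consistently with the convention $0^0=1$ used in the proof of Proposition \ref{chainrulemultvar}). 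Substituting this into the inner product $\Hprod{\MDer(\partial_jf(F))}{-\MDer L^{-1}(Z^{(j)}-\mean{Z^{(j)}})}$, the first sum contributes $\sum_i\mean{\partial_i\partial_jf(F)\,\Hprod{\MDer F^{(i)}}{-\MDer L^{-1}(Z^{(j)}-\mean{Z^{(j)}})}}$, and the second sum contributes precisely the last line of \eqref{NPintbypartsformulaWP}. For the first sum, I would condition on $F$: since $\partial_i\partial_jf(F)$ is $\sigma(F)$-measurable,
\begin{align*}
\mean{\partial_i\partial_jf(F)\,\Hprod{\MDer F^{(i)}}{-\MDer L^{-1}(Z^{(j)}-\mean{Z^{(j)}})}}=\mean{\partial_i\partial_jf(F)\,g_{Z,F}^{i,j}(F)},
\end{align*}
by the definition of $g_{Z,F}$ and the tower property. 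Summing over $i$ and $j$ and recognizing $\sum_{i,j}\partial_i\partial_jf(F)\,g_{Z,F}^{i,j}(F)=\HSinnprod{\Hess_f(F)}{g_{Z,F}(F)}$ yields the middle term, completing the identity.

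The main obstacle I anticipate is the justification of the integration-by-parts step and the conditioning in the required generality: one must check that all the pairings are genuinely integrable — in particular that $\Hprod{\MDer(\partial_jf(F))}{-\MDer L^{-1}(Z^{(j)}-\mean{Z^{(j)}})}\in\Lspa^1(\Omega)$ and that the Poisson remainder term, which involves $x(\MDer_zF)^\beta$ integrated against $d\mu(z)$, is finite. Here the bound $\bigl\|\,|\nabla f|_{\RR^d}\bigr\|_\infty<\infty$ controls $\MDer^W F$-type terms, while the bounded Hessian of $\nabla f$ controls the remainder's coefficient, and one uses $\MDer L^{-1}(Z^{(j)}-\mean{Z^{(j)}})\in\Lspa^2_{\mu\otimes\mathbb P}$ (valid because $Z^{(j)}\in\Dom\MDer^{W,J}\subset\mathbb{D}^{1,2}$ so $\MDer L^{-1}Z^{(j)}\in\mathbb{L}^{1,2}$) together with Cauchy--Schwarz in $\Lspa^2_\mu$ and in $\Lspa^2(\Omega)$. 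A secondary technical point is that the chain rule \eqref{chainrulemultvar1f} was stated for $f\in\mathcal{C}^{k-1}$ with $\partial^\alpha f$ a.e.\ differentiable for $|\alpha|=k-1$; applying it to $\partial_jf$ with $k=2$ needs $\partial_jf\in\mathcal{C}^1$ with bounded gradient and $\partial^\alpha(\partial_jf)$ a.e.\ differentiable for $|\alpha|=1$ — all guaranteed by $\nabla f\in\mathcal{C}^2$ with bounded Hessian, though one should note the hypothesis "$f$ has bounded gradient" is not literally assumed for the full function here; what is actually used is that $\nabla f$ and $\Hess f$ are locally controlled, so the argument should be phrased so that the chain rule is invoked for the components $\partial_jf$ rather than for $f$ itself, and the caveat about $F^{(i)}$ having absolutely continuous laws (as in Proposition \ref{chainrulemultvar}) should be carried along if the differentiability is only a.e.
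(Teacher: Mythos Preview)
Your proposal is correct and follows essentially the same route as the paper: write $Z^{(j)}-\mean{Z^{(j)}}=-\delta\MDer L^{-1}(Z^{(j)}-\mean{Z^{(j)}})$, use duality to pass to $\Hprod{\MDer(\partial_jf(F))}{-\MDer L^{-1}(\cdot)}$, expand $\MDer(\partial_jf(F))$ via the chain rule with $k=2$, condition on $F$, and recognize the Hilbert--Schmidt pairing. The paper streamlines by assuming without loss of generality that $Z$ and $\nabla f(F)$ are centered, whereas you carry $Z^{(j)}-\mean{Z^{(j)}}$ explicitly, but this is cosmetic. One small slip: the Taylor expansion you write down (with the Lagrange remainder point $\theta_z$) is formula \eqref{chainrulemultvar2f}, not \eqref{chainrulemultvar1f}; the paper likewise invokes \eqref{chainrulemultvar2f}.
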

\begin{proof}
Notice that it is enough to consider $\nabla f(F)$ and $Z$ centered, because
$$\mean{\Rinnprod{Z-\mean{Z}}{\nabla f(F)-\mean{\nabla f(F)}}{d}}=\mean{\Rinnprod{\nabla f(F)}{Z}{d}}-\Rinnprod{\mean{\nabla f(F)}}{\mean{Z}}{d}.$$
Hence, let $\mean{f(F)}=0=\mean{Z}$. By the chain rule \eqref{chainrulemultvar2f},
\begin{align*}
\mean{\Rinnprod{Z}{\nabla f(F)}{d}}&=\sum_{j=1}^d\mean{Z^{(j)}\frac{\partial f}{\partial x_j}(F)}=\sum_{j=1}^d\mean{\frac{\partial f}{\partial x_j}(F)\left(LL^{-1}Z^{(j)}\right)}\\
&=\sum_{j=1}^d\mean{\frac{\partial f}{\partial x_j}(F)\left(-\delta\MDer L^{-1}Z^{(j)}\right)}=\sum_{j=1}^d\mean{\Hprod{\MDer\frac{\partial f}{\partial x_j}(F)}{-\MDer L^{-1}Z^{(j)}}}\\
&=\sum_{j=1}^d\sum_{i=1}^d\mean{\frac{\partial^2 f}{\partial x_j\partial x_i}(F)\mean{\left.\Hprod{\MDer F^{(i)}}{-\MDer L^{-1}Z^{(j)}}\right|F}}\\
&\quad\quad\quad+\sum_{j=1}^d\mean{\Hprod{\sum_{\abs{\beta}=2}\frac{\partial^\beta\partial_jf(F+\theta_zx\MDer_zF)x(\MDer_zF)^\beta}{\beta!}}{-\MDer L^{-1}Z^{(j)}}}\\
&=\mean{\HSinnprod{\Hess_f(F)}{g_{Z,F}(F)}}\\
&\quad\quad\quad+\sum_{j=1}^d\mean{\Hprod{\sum_{\abs{\beta}=2}\frac{\partial^\beta\partial_jf(F+\theta_zx\MDer_zF)x(\MDer_zF)^\beta}{\beta!}}{-\MDer L^{-1}Z^{(j)}}}.
\end{align*}
\end{proof}

\bigskip

Another important tool is the extension of the so-called {\em Gaussian Poincar\'e inequality} to the present context. But to prove this we need an inequality similar to the one proved in \cite[Proposition 3.1]{Nourdin} (was proved for all $p\geq2$ in the Wiener space case). The technique used in their proof was based on the equivalence between Mehler and Ornstein-Uhlenbeck semigroups for the Gaussian case, but in the Wiener-Poisson space we lack such an equivalence. Nevertheless, it is possible to prove it for $p=2$ and that is, in fact, the case needed to prove the extension of the Gaussian Poincar\'e inequality.
\begin{prop}\label{ineq1}
Let $F\in\Dom\boldsymbol{D}$ satisfy $\mean{F}=0$. Then,
$$\mean{\Hnorm{\boldsymbol{D}L^{-1}F}^2}\leq \mean{\Hnorm{\boldsymbol{D}F}^2}.$$
\end{prop}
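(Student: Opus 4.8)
The plan is to use the chaos expansion of $F$ directly, since Proposition~\ref{ineq1} is purely a statement about the action of the operators $\boldsymbol{D}$ and $L^{-1}$ on each Wiener--Poisson chaos, and these act diagonally. Write $F=\sum_{q=1}^\infty I_q(f_q)$ (the $q=0$ term vanishes because $\mean{F}=0$), with each $f_q\in\Lspa^2_{\mu^{\otimes q}}$ symmetric. By definition $L^{-1}F=\sum_{q=1}^\infty\frac{-1}{q}I_q(f_q)$, so applying the Malliavin derivative \eqref{malliavinderivative2} termwise gives
\begin{align*}
\boldsymbol{D}_z L^{-1}F=\sum_{q=1}^\infty\frac{-1}{q}\,q\,I_{q-1}\bigl(f_q(z,\cdot)\bigr)=\sum_{q=1}^\infty -I_{q-1}\bigl(f_q(z,\cdot)\bigr),
\end{align*}
whereas $\boldsymbol{D}_zF=\sum_{q=1}^\infty q\,I_{q-1}\bigl(f_q(z,\cdot)\bigr)$.

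Next I would compute the two $\hilbert$-norms via the isometry property (property 3 of the integral $I_q$ on the Wiener--Poisson space, equivalently the orthogonality used in Remark~\ref{chaosintegral}): for fixed $z$ the summands $I_{q-1}(f_q(z,\cdot))$ are orthogonal in $\Lspa^2(\Omega)$ across distinct $q$, so integrating in $z$ against $d\mu$ and taking expectations yields
\begin{align*}
\mean{\Hnorm{\boldsymbol{D}L^{-1}F}^2}=\sum_{q=1}^\infty (q-1)!\,\Lqnorm{f_q}{\mu^{\otimes q}}^2,\qquad
\mean{\Hnorm{\boldsymbol{D}F}^2}=\sum_{q=1}^\infty q^2\,(q-1)!\,\Lqnorm{f_q}{\mu^{\otimes q}}^2,
\end{align*}
where I have used $\mean{I_{q-1}(g)^2}=(q-1)!\,\|\widetilde g\|^2_{\mu^{\otimes(q-1)}}$ and $\int f_q(z,\cdot)$ integrated over $z$ recovers $\|f_q\|^2_{\mu^{\otimes q}}$ up to the symmetrization bookkeeping. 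Comparing term by term, the inequality is immediate because $q^2\geq 1$ for every $q\geq 1$; in fact one gets the sharper $q$-weighted bound $\mean{\Hnorm{\boldsymbol{D}L^{-1}F}^2}\leq \frac1{q}\cdot(\text{that term})$, but $q^2\ge1$ suffices. One should also note this simultaneously shows $L^{-1}F\in\Dom\boldsymbol{D}$ whenever $F\in\Dom\boldsymbol{D}$, since the left series is dominated by the right one which is finite by hypothesis.

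The only genuine subtlety — and the step I would be most careful about — is justifying the termwise manipulation of $\boldsymbol{D}$, i.e.\ that $\boldsymbol{D}$ applied to the infinite chaos sum is the sum of the $\boldsymbol{D}I_q(f_q)$. This is standard once one knows $L^{-1}F\in\Dom\boldsymbol{D}$, which follows from the finiteness of $\sum_{q\ge1}q\cdot q!\,\|f_q/q\|^2=\sum_{q\ge1}q!\,\|f_q\|^2\le\sum_{q\ge1}q\,q!\,\|f_q\|^2<\infty$ (the defining condition for $\Dom\boldsymbol{D}$); then closedness of $\boldsymbol{D}$ upgrades the formal termwise derivative to the genuine one. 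Everything else is the orthogonality-of-chaoses bookkeeping, which is routine. So the proof reduces to: (i) write both quantities as $\sum_q \lambda_q^2\,q!\,\|f_q\|^2_{\mu^{\otimes q}}$ with $\lambda_q=1$ in one case and $\lambda_q=q$ in the other (after accounting for the $q$ from differentiation), and (ii) observe $1\le q$ for all $q\ge1$.
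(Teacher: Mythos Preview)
Your proposal is correct and follows essentially the same route as the paper: expand $F$ in its chaos decomposition, use the orthogonality of the chaoses to split $\mean{\Hnorm{\boldsymbol{D}L^{-1}F}^2}$ and $\mean{\Hnorm{\boldsymbol{D}F}^2}$ into sums over $q$, and compare term by term via $1/q^2\le 1$. Your version is a bit more explicit about the intermediate formulas and about the domain issue $L^{-1}F\in\Dom\boldsymbol{D}$, which the paper leaves implicit.
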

\begin{proof}
Assume $F$ has its chaos decomposition given by (\ref{chaosrep1}). By the orthogonality between chaoses we get,
\begin{align*}
 \mean{\Hnorm{\boldsymbol{D}L^{-1}F}^2}=&\mean{\Hnorm{\sum_{q=1}^\infty\boldsymbol{D}\frac{1}{q}I_q(f_q)}^2}=\mean{\sum_{q=1}^\infty\frac{1}{q^2}\Hnorm{\boldsymbol{D}I_q(f_q)}^2}\leq \mean{\sum_{q=1}^\infty\Hnorm{\boldsymbol{D}I_q(f_q)}^2} =\mean{\Hnorm{\boldsymbol{D}F}^2}.
\end{align*}
\end{proof}

\begin{prop}\label{poincareineq}
(Extension of the Gaussian Poincar\'e inequality)\\
Let $F\in\Dom\boldsymbol{D}$. Then,
\begin{align}\label{poincareineq2}
\Var{F}\leq \mean{\Hnorm{\boldsymbol{D}F}^2},
\end{align}
with equality if and only if $F$ is a linear combination of elements in the first and $0^{\text{th}}$ chaos.
\end{prop}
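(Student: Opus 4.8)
The plan is to reduce the Poincaré inequality to the chaos expansion and exploit the explicit action of $\MDer$ on multiple integrals. Write $F=\sum_{q=0}^\infty I_q(f_q)$ as in \eqref{chaosrep1}, so that $\Var{F}=\sum_{q=1}^\infty q!\Lqnorm{f_q}{\mu^{\otimes q}}^2$, since the $0^{\text{th}}$ chaos contributes only the mean and distinct chaoses are orthogonal. On the other hand, using \eqref{malliavinderivative2} and the orthogonality of the multiple integrals $I_{q-1}(f_q(z,\cdot))$ for different $q$, I would compute
$$\mean{\Hnorm{\MDer F}^2}=\int_{\RR^+\times\RR}\mean{\Bigl(\sum_{q=1}^\infty q I_{q-1}\bigl(f_q(z,\cdot)\bigr)\Bigr)^2}\,d\mu(z)=\sum_{q=1}^\infty q^2 (q-1)!\int_{\RR^+\times\RR}\Lqnorm{f_q(z,\cdot)}{\mu^{\otimes(q-1)}}^2\,d\mu(z),$$
and the inner integral, by Fubini and the symmetry of $f_q$, equals $\Lqnorm{f_q}{\mu^{\otimes q}}^2$. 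Hence $\mean{\Hnorm{\MDer F}^2}=\sum_{q=1}^\infty q\cdot q!\,\Lqnorm{f_q}{\mu^{\otimes q}}^2$.

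Comparing the two expressions term by term, the inequality \eqref{poincareineq2} is simply
$$\sum_{q=1}^\infty q!\Lqnorm{f_q}{\mu^{\otimes q}}^2\ \leq\ \sum_{q=1}^\infty q\cdot q!\Lqnorm{f_q}{\mu^{\otimes q}}^2,$$
which holds because $q\geq1$ for every term appearing in the sum. This is exactly the same mechanism already used in the proof of Proposition \ref{ineq1}, with the weight $1/q^2$ there replaced by the weight $q$ here; in fact one can even quote that computation directly, since $\mean{\Hnorm{\MDer F}^2}=\sum_q q\cdot q!\|f_q\|^2\geq\sum_q q!\|f_q\|^2=\Var F$.

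For the equality case, term-by-term comparison shows that equality holds precisely when $(q-1)\,q!\,\Lqnorm{f_q}{\mu^{\otimes q}}^2=0$ for every $q\geq1$, i.e. when $f_q=0$ for all $q\geq2$. Since $f_0=\mean{F}$ and $f_1$ is arbitrary, this means $F=\mean{F}+I_1(f_1)$, which is exactly the statement that $F$ lies in the direct sum of the $0^{\text{th}}$ and first chaos. Conversely any such $F$ plainly achieves equality. I do not anticipate a genuine obstacle here; the only point requiring a little care is justifying the interchange of the $\mu(z)$-integral with the expectation and the infinite sum (dominated/monotone convergence, using that $F\in\Dom\boldsymbol{D}$ guarantees $\sum_q q\,q!\|f_q\|^2<\infty$) and the Fubini step identifying $\int\Lqnorm{f_q(z,\cdot)}{\mu^{\otimes(q-1)}}^2\,d\mu(z)$ with $\Lqnorm{f_q}{\mu^{\otimes q}}^2$, which is immediate from the definition of the product measure and symmetry of $f_q$.
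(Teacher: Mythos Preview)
Your proof is correct and arguably cleaner than the paper's. The paper argues differently: it writes $\Var{F}=\mean{F^2}=\mean{\Hprod{\MDer F}{-\MDer L^{-1}F}}$ via the identity $F=-\delta\MDer L^{-1}F$, applies Cauchy--Schwarz to get $\mean{\Hnorm{\MDer F}^2}^{1/2}\mean{\Hnorm{\MDer L^{-1}F}^2}^{1/2}$, and then invokes Proposition~\ref{ineq1} to bound the second factor by the first. Your approach bypasses $L^{-1}$, the duality $L=-\delta\MDer$, and Cauchy--Schwarz entirely, going straight to the chaos expansion and comparing $\sum_q q!\|f_q\|^2$ with $\sum_q q\cdot q!\|f_q\|^2$ term by term. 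This is more elementary and, importantly, makes the equality characterization transparent: equality forces $(q-1)q!\|f_q\|^2=0$ for every $q\geq1$, hence $f_q=0$ for $q\geq2$. In the paper's route the equality case would require separately tracking when Cauchy--Schwarz and Proposition~\ref{ineq1} are both tight, which is less direct (and indeed the paper's written proof does not spell this out). Your justification of the Fubini/interchange steps using $F\in\Dom\MDer$ is exactly the right remark.
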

\begin{proof}
Assume, without loss of generality, that $E[F]=0$.
\begin{align*}
\Var{F}=\mean{F^2}=\mean{\Hprod{\boldsymbol{D}F}{-\boldsymbol{D}L^{-1}F}}\leq \mean{\Hnorm{\boldsymbol{D}F}^2}^{\frac{1}{2}}\mean{\Hnorm{\boldsymbol{D}L^{-1}F}^2}^{\frac{1}{2}}\leq \mean{\Hnorm{\boldsymbol{D}F}^2},
\end{align*}
where Proposition \ref{ineq1} was used in the last step, and the fact that $F=-\delta\boldsymbol{D}L^{-1}F$ in the second step.
\end{proof}

\subsection{Nourdin-Peccati Bound}

Let $\Lipnorm{\cdot}$ and $\BLnorm{\cdot}$ be the Lipschitz and bounded Lipschitz seminorms,\footnote{For $h:\RR^d\to\RR^k$, $\Lipnorm{h}:=\sup_{x\neq y}\frac{\|h(x)-h(y)\|_{\RR^k}}{\|x-y\|_{\RR^d}}$. For $h:\RR^d\to\RR$, $\BLnorm{h}:=\Lipnorm{h}+\normsup{h}$.} respectively. Consider the following separating classes:
$$\FMclass(\RR^d):=\left\{h:\RR^d\to\RR\ \mid\ \BLnorm{h}\leq1\right\},\footnote{This class implies the so-called Fourtet-Mourier distance.}\hspace{.7cm} \Wclass(\RR^d):=\left\{h:\RR^d\to\RR\ \mid\ \Lipnorm{h}\leq1\right\},\footnote{This class implies the so-called Wasserstein distance.}$$
$${\rm and}\quad\quad \PZclass(\RR^d):=\left\{h:\RR^d\to\RR\ \mid\ h\in \mathcal{C}^2(\RR^d), \Lipnorm{h}\leq1,\Lipnorm{\nabla h}\leq1\right\}.\footnote{The letters PZ stand for the names Peccati-Zheng, the authors of the paper where this class was used to define a distance.}$$
If $\Hclass$ represents a separating class, Stein's method tells us that
$$d_{\Hclass}(Z,F):=\sup_{h\in\Hclass}\abs{\mean{h(Z)}-\mean{h(F)}}\leq \sup_{h\in\Hclass}\abs{\mean{\HSinnprod{\Sigma}{\Hess_{f_h}(F)}-\Rinnprod{F}{\nabla f_h(F)}{d}}},$$
where $f_h$ is a solution of the so-called Stein's equation. Furthermore, in \cite[Lemma 3]{Chatterjee} the authors showed that
\begin{align}\label{cotas}
\normsup{\HSnorm{\Hess_{f_h}}}\leq \boldsymbol{k}_0\Lipnorm{h}\quad \&\quad \Lipnorm{\Hess_{f_h}}\leq \boldsymbol{k}_1\Lipnorm{\nabla h}.
\end{align}

We can now state the main theorem of this paper. 
\begin{theorem}\label{main}(NP Bound)\\
Suppose that $Z\sim\mathcal{N}(m,\Sigma)$ with a positive definite covariance matrix $\Sigma$. Let $F=(F^{(1)},\dots,F^{(d)})$ be such that $\mean{F}=m$ and $F^{(i)}\in$ \domDWJ, for all $i$. Then,
\begin{align}\label{mainineq1}
d_{\Hclass}(Z,F)\leq \boldsymbol{k}\left(\mean{\HSnorm{\Sigma-\gF(F)}}+\mean{\Hprod{\abs{x}\left(\sum_{i=1}^d\abs{\MDer F^{(i)}}\right)^2}{\sum_{i=1}^d\abs{\MDer L^{-1}F^{(i)}}}}\right),
\end{align}
where $\gF^{i,j}(x):=\mean{\left.\innprod{\MDer F^{(i)}}{-\MDer L^{-1}F^{(j)}}\right|F=x}$, and $\hilbert$ is the underlying Hilbert space. If $\nu\equiv0$ (no jump part, i.e., $x=0$), then $\Hclass$ could be any of the three separating classes above, but if the jump part is present then $\Hclass=\PZclass$ and $\hilbert=\Lspa^2_\mu$.
\end{theorem}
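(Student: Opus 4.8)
The plan is to feed the Nourdin--Peccati identity of Proposition \ref{NPintbypartsWP} into the Stein-method bound recorded just before the statement, and then estimate the two resulting pieces by Cauchy--Schwarz in $\hilbert$ and the Stein-solution bounds \eqref{cotas}. Two preliminary reductions: first, since each of the three classes is translation-invariant and $\MDer$, $L^{-1}$ kill additive constants, simultaneously translating $F\mapsto F-m$, $Z\mapsto Z-m$ changes neither $d_\Hclass(Z,F)$ nor the right-hand side of \eqref{mainineq1}, so we may assume $\mean{F}=0$; second, since that right-hand side does not depend on $h$ while mollification $h\mapsto h\ast\phi_\ep$ does not increase $\Lipnorm{h}$ or $\Lipnorm{\nabla h}$ and $\mean{(h\ast\phi_\ep)(F)}\to\mean{h(F)}$ by dominated convergence, it suffices to bound $\abs{\mean{h(F)}-\mean{h(Z)}}$ for smooth $h$ --- for which the solution $f_h$ of Stein's equation for $\mathcal{N}(0,\Sigma)$ is smooth and the bound above reads $\abs{\mean{h(F)}-\mean{h(Z)}}\le\abs{\mean{\HSinnprod{\Sigma}{\Hess_{f_h}(F)}}-\mean{\Rinnprod{F}{\nabla f_h(F)}{d}}}$.

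Apply Proposition \ref{NPintbypartsWP} with $f=f_h$ and with the vector ``$Z$'' of that proposition taken to be $F$ itself; its hypotheses now hold ($F^{(i)}\in\Dom\MDer^{W,J}$ and $\nabla f_h\in\mathcal{C}^2$ with bounded Hessian). Since then $g_{Z,F}=\gF$ and $\mean{F}=0$, the proposition gives
\begin{align*}
\mean{\Rinnprod{F}{\nabla f_h(F)}{d}}&=\mean{\HSinnprod{\Hess_{f_h}(F)}{\gF(F)}}\\
&\quad+\sum_{j=1}^d\mean{\Hprod{\sum_{\abs{\beta}=2}\tfrac{\partial^\beta\partial_jf_h(F+\theta_zx\MDer_zF)\,x(\MDer_zF)^\beta}{\beta!}}{-\MDer L^{-1}F^{(j)}}}.
\end{align*}
Substituting into the Stein bound, the two Hessian contributions combine (by symmetry and bilinearity of $\HSinnprod{\cdot}{\cdot}$) into $\mean{\HSinnprod{\Sigma-\gF(F)}{\Hess_{f_h}(F)}}$, which by $\abs{\HSinnprod{A}{B}}\le\HSnorm{A}\HSnorm{B}$ and $\HSnorm{\Hess_{f_h}}\le\normsup{\HSnorm{\Hess_{f_h}}}\le\boldsymbol{k}_0\Lipnorm{h}$ (the first bound in \eqref{cotas}) is at most $\boldsymbol{k}_0\Lipnorm{h}\,\mean{\HSnorm{\Sigma-\gF(F)}}$.

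For the remainder, note that every third-order partial $\partial^\beta\partial_jf_h$ with $\abs{\beta}=2$ is bounded, being (up to a combinatorial factor) a first partial of an entry of $\Hess_{f_h}$ and hence dominated by $\Lipnorm{\Hess_{f_h}}\le\boldsymbol{k}_1\Lipnorm{\nabla h}$ (the second bound in \eqref{cotas}); therefore $\bigl|\sum_{\abs{\beta}=2}\tfrac{\partial^\beta\partial_jf_h(\cdot)}{\beta!}\,x(\MDer_zF)^\beta\bigr|\le c\,\abs{x}\bigl(\sum_{i=1}^d\abs{\MDer_zF^{(i)}}\bigr)^2$ pointwise in $z$, the factor $x$ also making the Wiener part $z=(t,0)$ drop out (consistently with the chain rule). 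Bounding $\Hprod{\,\cdot\,}{-\MDer L^{-1}F^{(j)}}$ in $\hilbert=\Lspa^2_\mu$ by Cauchy--Schwarz and summing over $j$ dominates the remainder by $c\,\mean{\Hprod{\abs{x}\left(\sum_{i=1}^d\abs{\MDer F^{(i)}}\right)^2}{\sum_{i=1}^d\abs{\MDer L^{-1}F^{(i)}}}}$, exactly the second term of \eqref{mainineq1}. Taking $\sup$ over (smooth) $h\in\Hclass$, using $\Lipnorm{h}\le1$ and $\Lipnorm{\nabla h}\le1$, and absorbing $\boldsymbol{k}_0,\boldsymbol{k}_1,c$ into one constant $\boldsymbol{k}$ yields \eqref{mainineq1}.

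Finally, the dichotomy: if $\nu\equiv0$ then $x\equiv0$, the remainder vanishes, only $\normsup{\HSnorm{\Hess_{f_h}}}\le\boldsymbol{k}_0\Lipnorm{h}$ is used, this holds for $h$ in any of $\FMclass,\Wclass,\PZclass$ (since $\BLnorm{h}\le1$ forces $\Lipnorm{h}\le1$), and the argument goes through for an arbitrary underlying Hilbert space $\hilbert$; if instead $\nu\not\equiv0$ the remainder genuinely involves third derivatives of $f_h$, forcing control of $\Lipnorm{\Hess_{f_h}}$ and hence $\Hclass=\PZclass$, while the Poisson-direction calculus (the difference-quotient operator \eqref{derivative1}, the product rule \eqref{productrule1}, and thus the jump terms of Propositions \ref{chainrulemultvar} and \ref{NPintbypartsWP}) is developed only for $\hilbert=\Lspa^2_\mu$. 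I expect the delicate step to be the legitimacy of applying Proposition \ref{NPintbypartsWP} to $f=f_h$: the regularity of the Stein solution and the mollification/limiting argument transferring the bound from smooth test functions to the full class; the remaining estimates are just Cauchy--Schwarz and \eqref{cotas}.
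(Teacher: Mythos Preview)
Your proposal is correct and follows essentially the same route as the paper: start from the Stein bound, apply Proposition~\ref{NPintbypartsWP} with $f=f_h$ and the vector equal to $F$, split off the Hilbert--Schmidt term $\HSinnprod{\Sigma-\gF(F)}{\Hess_{f_h}(F)}$ and bound it via \eqref{cotas}, then control the jump remainder pointwise using the third-derivative bound and the identity $\sum_{|\beta|=2}|\MDer_zF|^\beta=\bigl(\sum_i|\MDer F^{(i)}|\bigr)^2$. The only cosmetic slip is calling the remainder estimate ``Cauchy--Schwarz'': what you actually use (and what the paper uses) is just $\bigl|\int u\,v\,d\mu\bigr|\le\int|u|\,|v|\,d\mu$ together with the pointwise domination of $u$, which is exactly what yields the inner-product form $\Hprod{|x|(\sum_i|\MDer F^{(i)}|)^2}{\sum_i|\MDer L^{-1}F^{(i)}|}$ rather than a product of $\Lspa^2_\mu$-norms.
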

\begin{proof}
The case where $\sigma\neq0$ and $\nu\equiv0$ was shown already in \cite{Nourdin0} and \cite{Nourdinmult}. The case where $\sigma=0$ and $\nu\not\equiv0$ was discussed in \cite{Peccati} and \cite{Peccati2}. Now, consider the case $\sigma\neq0$ and $\nu\not\equiv0$. By Proposition \ref{NPintbypartsWP} (with $k=2$ in (\ref{chainrulemultvar2f})) we get
\begin{align*}
d_{\PZclass}(Z,F)&\leq\sup_{h\in\PZclass}\abs{\mean{\HSinnprod{\Sigma}{\Hess_{f_h}(F)}}-\mean{\Rinnprod{F}{\nabla f_h(F)}{d}}}\\
&\leq\sup_{h\in\PZclass}\mean{\abs{\HSinnprod{\Sigma-\gF(F)}{\Hess_{f_h}(F)}}}\\
&\quad\quad\quad+\sup_{h\in\PZclass}\sum_{i=1}^d\mean{\Lqprod{\sum_{\abs{\beta}=2}\abs{\frac{\partial^\beta\partial_if_h(F+\theta_zx\MDer_zF)x(\MDer_zF)^\beta}{\beta!}}}{\abs{-\MDer L^{-1}F^{(i)}}}{\mu}}\\
&\leq\sup_{h\in\PZclass}\mean{\HSnorm{\Sigma-\gF(F)}\HSnorm{\Hess_{f_h}(F)}}\\
&\quad\quad\quad+\sup_{h\in\PZclass}\sup_{i,\beta}\frac{\normsup{\partial^\beta\partial_if_h}}{\beta!}\mean{\Lqprod{\abs{x}\sum_{\abs{\beta}=2}\abs{\MDer_zF}^\beta}{\sum_{i=1}^d\abs{\MDer L^{-1}F^{(i)}}}{\mu}}\\
&\leq \sup_{h\in\PZclass}\normsup{\HSnorm{\Hess_{f_h}}}\cdot\mean{\HSnorm{\Sigma-\gF(F)}}\\
&\quad\quad\quad+ \boldsymbol{C}\sup_{h\in\PZclass}\Lipnorm{\Hess_{f_h}}\cdot\mean{\Lprod{\abs{x}\left(\sum_{i=1}^d\abs{\MDer F^{(i)}}\right)^2}{\sum_{i=1}^d\abs{\MDer L^{-1}F^{(i)}}}{\mu}},
\end{align*}
where the bounds \eqref{cotas} along with $\boldsymbol{k}_0+\boldsymbol{C}\boldsymbol{k}_1\leq\boldsymbol{k}$ justify the NP bound. In the last step we used the fact that $\sum_{\abs{\beta}=2}\abs{\MDer_zF}^\beta=\sum_{i,j=1}^d\abs{\MDer_zF^{(i)}}\abs{\MDer_zF^{(j)}}=\left(\sum_{i=1}^d\abs{\MDer F^{(i)}}\right)^2$.
\end{proof}

\bigskip

In terms of computations, the inequality \eqref{mainineq1} is not as tractable as we would like it to be. The following corollary can deal with this issue.

\begin{corollary}\label{mainqchaos}(Second Order Poincar\'e Inequality)\\
Let $Z\sim\mathcal{N}(m,\Sigma)$ with a positive definite covariance matrix $\Sigma$. Let $F=(F^{(1)},\dots,F^{(d)})$ be such that $\mean{F}=m$, Var$[F]=\Sigma$ and $F^{(i)},\MDer F^{(i)}\in$ \domDWJ, for all $i$. Then,
\begin{align}
\hspace{-1cm} &d_{\Hclass}(Z,F)\leq \boldsymbol{C}\hspace{-.1cm}\left(\sum_{i,j=1}^d\left(\hspace{-.1cm}\mean{\opnorm{\boldsymbol{D}^2F^{(i)}}^4}^{\frac{1}{4}}\hspace{-.1cm}\mean{\Hnorm{\boldsymbol{D}F^{(j)}}^4}^{\frac{1}{4}}\hspace{-.2cm}+\mean{\Hnorm{\Hprod{x}{\boldsymbol{D}^2F^{(i)}\boldsymbol{D}^2F^{(j)}}}^2}^{\frac{1}{2}}\right)\hspace{-.1cm}+\sum_{i=1}^d\mean{\Hprod{\abs{x}}{\abs{\boldsymbol{D}F^{(i)}}^3}}\right)\label{ineqqchaos1}\\
\hspace{-1cm}&\leq \boldsymbol{C}\hspace{-.1cm}\left(\sum_{i,j=1}^d\left(\hspace{-.1cm}\mean{\HHnorm{\boldsymbol{D}^2F^{(i)}\otimes_1\boldsymbol{D}^2F^{(i)}}^2}^{\frac{1}{4}}\hspace{-.1cm}\mean{\Hnorm{\boldsymbol{D}F^{(j)}}^4}^{\frac{1}{4}}\hspace{-.2cm}+\mean{\Hnorm{\Hprod{x}{\boldsymbol{D}^2F^{(i)}\boldsymbol{D}^2F^{(j)}}}^2}^{\frac{1}{2}}\right)\hspace{-.1cm}+\sum_{i=1}^d\mean{\Hprod{\abs{x}}{\abs{\boldsymbol{D}F^{(i)}}^3}}\right),\label{ineqqchaos2}
\end{align}
where $\hilbert$ is the underlying Hilbert space, and $\opnorm{\cdot}$ is the operator norm.\footnote{Consider the operator $T:\hilbert\to\hilbert$ such that $T(h)=\Hprod{h}{\boldsymbol{D}^2F}$. The operator norm of $T$ is what we have denoted by $\opnorm{\boldsymbol{D}^2F}$.} If $\nu\equiv0$ (no jump part, i.e., $x=0$), then $\Hclass$ could be any of the three separating classes above, but if the jump part is present then $\Hclass=\PZclass$, $\hilbert=\Lspa^2_\mu$ and $F^{(i)}$ must lie in a fixed Wiener-Poisson chaos, that is, $F^{(i)}=I_{q_i}(f_i)$ for some $q_i\in\NN$, for all $i$.
\end{corollary}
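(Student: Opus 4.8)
The plan is to start from the Nourdin--Peccati bound of Theorem \ref{main} and convert its right-hand side into the more computable quantities of \eqref{ineqqchaos1}. So everything reduces to estimating the two quantities $\mean{\HSnorm{\Sigma-\gF(F)}}$ and $\mean{\Hprod{\abs{x}\bigl(\sum_i\abs{\MDer F^{(i)}}\bigr)^2}{\sum_i\abs{\MDer L^{-1}F^{(i)}}}}$. For the first, the key observation is that, by the $\MDer$--$\delta$ duality and the identity $LL^{-1}F^{(j)}=F^{(j)}-\mean{F^{(j)}}$, one has $\Sigma^{i,j}=\Cov{F^{(i)}}{F^{(j)}}=\mean{\Hprod{\MDer F^{(i)}}{-\MDer L^{-1}F^{(j)}}}=\mean{\gF^{i,j}(F)}$ (here we use $\Var{F}=\Sigma$); hence $\Sigma^{i,j}-\gF^{i,j}(F)$ is centered, and by Cauchy--Schwarz together with $\sqrt{\sum_{i,j}a_{i,j}}\le\sum_{i,j}\sqrt{a_{i,j}}$,
\begin{equation*}
\mean{\HSnorm{\Sigma-\gF(F)}}\le\sum_{i,j=1}^d\bigl(\Var{\gF^{i,j}(F)}\bigr)^{1/2}.
\end{equation*}

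Next I would invoke the extended Poincar\'e inequality (Proposition \ref{poincareineq}) together with the fact that conditional expectation is an $\Lspa^2$-contraction: writing $Y_{i,j}:=\Hprod{\MDer F^{(i)}}{-\MDer L^{-1}F^{(j)}}$, so that $\gF^{i,j}(F)=\mean{Y_{i,j}\mid F}$, one gets $\Var{\gF^{i,j}(F)}\le\Var{Y_{i,j}}\le\mean{\Hnorm{\MDer Y_{i,j}}^2}$. Expanding $\MDer_z Y_{i,j}$ by the second-order product rule \eqref{productrule2} produces three terms, $\Hprod{\MDer^2_z F^{(i)}}{-\MDer L^{-1}F^{(j)}}$, $\Hprod{\MDer F^{(i)}}{-\MDer^2_z L^{-1}F^{(j)}}$ and the jump term $\Hprod{x\MDer^2_z F^{(i)}}{-\MDer^2_z L^{-1}F^{(j)}}$. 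This is exactly where the hypothesis $F^{(i)}=I_{q_i}(f_i)$ is used: since $L^{-1}F^{(i)}=-q_i^{-1}F^{(i)}$, we may replace $-\MDer L^{-1}F^{(j)}$ by $q_j^{-1}\MDer F^{(j)}$ and $-\MDer^2 L^{-1}F^{(j)}$ by $q_j^{-1}\MDer^2 F^{(j)}$, absorbing the factors $q_j^{-1}\le1$. After integration in $z$, the first two terms are bounded (viewing $\MDer^2 F^{(i)}$ as a Hilbert--Schmidt operator on $\hilbert$) by $\opnorm{\MDer^2 F^{(i)}}^2\Hnorm{\MDer F^{(j)}}^2$ and $\opnorm{\MDer^2 F^{(j)}}^2\Hnorm{\MDer F^{(i)}}^2$, and a Cauchy--Schwarz in $\mean{\cdot}$ followed by the outer square root produces $\mean{\opnorm{\MDer^2 F^{(i)}}^4}^{1/4}\mean{\Hnorm{\MDer F^{(j)}}^4}^{1/4}$; the jump term, after reorganizing the double $\mu$-integral, yields $\mean{\Hnorm{\Hprod{x}{\MDer^2 F^{(i)}\MDer^2 F^{(j)}}}^2}^{1/2}$. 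Using $\sqrt{a+b+c}\le\sqrt a+\sqrt b+\sqrt c$ and summing over $i,j$ gives the first two families of terms in \eqref{ineqqchaos1}.

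For the second quantity in the NP bound, again by $\abs{\MDer L^{-1}F^{(i)}}=q_i^{-1}\abs{\MDer F^{(i)}}\le\abs{\MDer F^{(i)}}$ we have $\mean{\Hprod{\abs{x}(\sum_i\abs{\MDer F^{(i)}})^2}{\sum_i\abs{\MDer L^{-1}F^{(i)}}}}\le\mean{\int_{\RR^+\times\RR}\abs{x}\bigl(\sum_{i=1}^d\abs{\MDer_z F^{(i)}}\bigr)^3 d\mu(z)}$, and the elementary inequality $(\sum_{i=1}^d a_i)^3\le d^2\sum_{i=1}^d a_i^3$ turns this into $d^2\sum_i\mean{\Hprod{\abs{x}}{\abs{\MDer F^{(i)}}^3}}$, the last family of terms; all numerical constants are absorbed into $\boldsymbol{C}$. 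The refinement from \eqref{ineqqchaos1} to \eqref{ineqqchaos2} is the spectral estimate for a symmetric $A\in\hilbert^{\otimes2}$ with eigenvalues $(\lambda_k)$: $\opnorm{A}^4=\max_k\lambda_k^4\le\sum_k\lambda_k^4=\HHnorm{A\otimes_1 A}^2$, applied with $A=\MDer^2 F^{(i)}$. When $\nu\equiv0$ the jump terms disappear and the argument collapses to the Wiener-space case, and the admissible separating classes (and the identification $\hilbert=\Lspa^2_\mu$ when jumps are present) are inherited verbatim from Theorem \ref{main}.

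\textbf{Main obstacle.} The principal difficulty is the lack of an $\Lspa^p$-version ($p>2$), or an operator-norm version, of Proposition \ref{ineq1} in the Wiener--Poisson space: the Mehler-formula argument available in \cite{Nourdin} has no counterpart here, which is precisely why the corollary must be restricted to a fixed Wiener--Poisson chaos, where $L^{-1}F^{(i)}=-q_i^{-1}F^{(i)}$ trivializes all the required $\Lspa^4$- and operator-norm comparisons. A secondary but nontrivial technical point is the bookkeeping needed to cast the jump contribution $\Hprod{x\MDer^2_z F^{(i)}}{-\MDer^2_z L^{-1}F^{(j)}}$ into the form $\mean{\Hnorm{\Hprod{x}{\MDer^2 F^{(i)}\MDer^2 F^{(j)}}}^2}^{1/2}$, since the jump coordinate $x$ is attached to one specific integration variable and must be tracked through the contraction.
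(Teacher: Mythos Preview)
Your proposal is correct and follows essentially the same route as the paper: start from the NP bound of Theorem~\ref{main}, use $\Sigma^{i,j}=\mean{\gF^{i,j}(F)}$ together with Jensen for conditional expectation to reduce the Hilbert--Schmidt term to $\sum_{i,j}\Var{Y_{i,j}}^{1/2}$, apply the Poincar\'e inequality (Proposition~\ref{poincareineq}) and the product rule~\eqref{productrule2}, then split via the operator-norm bound and Cauchy--Schwarz in $\mean{\cdot}$; the fixed-chaos hypothesis is invoked exactly where you place it (to replace $-\MDer L^{-1}F^{(j)}$ by $q_j^{-1}\MDer F^{(j)}$), while for $\nu\equiv0$ the paper, like you, defers to \cite[Proposition~3.1]{Nourdin} (the Mehler-based $\Lspa^4$/operator-norm comparison) in place of the fixed-chaos trick. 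The only cosmetic differences are the order of exposition and your sharper constant $d^2$ versus the paper's $d^3$ in the cube-term estimate.
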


\pagebreak

\begin{proof}
The one dimensional version (with $\nu\equiv0$) of this inequality was proved in \cite{Nourdin}, but no multidimensional inequality has been worked out. Since $d_{\Hclass}(Z,F)=d_{\Hclass}(Z-m,F-m)$, then we can assume without loss of generality that $m=0$. The last term is not zero only if $\nu\not\equiv0$, and in this case $-\MDer L^{-1}F^{(i)}=\frac{1}{q_i}\MDer F^{(i)}$, with $1\leq q_i$. Hence, applying $\left(\sum_{i=1}^d\abs{a_i}\right)^3\leq d^3\sum_{i=1}^d\abs{a_i}^3$,
$$\mean{\Hprod{\abs{x}\left(\sum_{i=1}^d\abs{\MDer F^{(i)}}\right)^2}{\sum_{i=1}^d\abs{\MDer L^{-1}F^{(i)}}}}\leq\mean{\Hprod{\abs{x}}{\left(\sum_{i=1}^d\abs{\boldsymbol{D}F^{(i)}}\right)^3}}\leq d^3\sum_{i=1}^d\mean{\Hprod{\abs{x}}{\abs{\boldsymbol{D}F^{(i)}}^3}}.$$

\bigskip

To show the other terms, consider the random matrix $G^{i,j}_F:=\innprod{\MDer F^{(i)}}{-\MDer L^{-1}F^{(j)}}$. Note that $\mean{\gF(F)}=\mean{G_F}=\bigl(\mean{F^{(i)}F^{(j)}})_{1\leq i, j\leq d}=\Sigma$, and by Jensen's inequality (for conditional expectation) along with H$\ddot{\text{o}}$lder's we have that
$$\mean{\HSnorm{\Sigma-\gF(F)}}\leq\mean{\HSnorm{\Sigma-\gF(F)}^2}^{\frac{1}{2}}=\sqrt{\sum_{i,j=1}^d\Var{\gF^{i,j}(F)}}\leq\sum_{i,j=1}^d\sqrt{\Var{G^{i,j}_F}}.$$
By the Gaussian Poincar\'e inequality (Proposition \ref{poincareineq}) we have that $\sqrt{\Var{G^{i,j}_F}}\leq \sqrt{\mean{\Hnorm{\boldsymbol{D}G^{i,j}_F}^2}}$. Also, by the product rule \eqref{productrule2}, the triangular inequality, and H$\ddot{\text{o}}$lder we get 
\begin{align*}
\hspace{-1cm}&\mean{\Hnorm{\boldsymbol{D}G^{i,j}_F}^2}^{\frac{1}{2}}\hspace{-.2cm}\leq\mean{\Hnorm{\Hprod{\boldsymbol{D}^2F^{(i)}}{-\boldsymbol{D}L^{-1}F^{(j)}}}^2}^{\frac{1}{2}}\hspace{-.2cm}+\mean{\Hnorm{\Hprod{\boldsymbol{D}F^{(i)}}{-\boldsymbol{D}^2L^{-1}F^{(j)}}}^2}^{\frac{1}{2}}\hspace{-.2cm}+\mean{\Hnorm{\Hprod{x\boldsymbol{D}^2F^{(i)}}{-\boldsymbol{D}^2L^{-1}F^{(j)}}}^2}^{\frac{1}{2}}\\
\hspace{-1cm}&\quad\quad\leq \mean{\opnorm{\boldsymbol{D}^2F^{(i)}}^4}^{\frac{1}{4}}\mean{\Hnorm{\boldsymbol{D}L^{-1}F^{(j)}}^4}^{\frac{1}{4}}+\mean{\Hnorm{\boldsymbol{D}F^{(i)}}^4}^{\frac{1}{4}}\mean{\opnorm{\boldsymbol{D}^2L^{-1}F^{(j)}}^4}^{\frac{1}{4}}+\mean{\Hnorm{\Hprod{x\boldsymbol{D}^2F^{(i)}}{-\boldsymbol{D}^2L^{-1}F^{(j)}}}^2}^{\frac{1}{2}},
\end{align*}
because $\Hnorm{\Hprod{\boldsymbol{D}^2F}{\boldsymbol{D}G}}^2\leq \opnorm{\boldsymbol{D}^2F}^2\Hnorm{\boldsymbol{D}G}^2$.

\bigskip

{\bf $\boldsymbol{\bullet\ }$ If $\boldsymbol{\nu\equiv0}$:}\\
In this scenario $x=0$, hence $\mean{\Hnorm{\Hprod{x\boldsymbol{D}^2F^{(i)}}{-\boldsymbol{D}^2L^{-1}F^{(j)}}}^2}^{\frac{1}{2}}=0$. In \cite[Proposition 3.1]{Nourdin} there is a proof that $\mean{\Hnorm{\boldsymbol{D}L^{-1}F}^4}^{\frac{1}{4}}\leq \mean{\Hnorm{\boldsymbol{D}F}^4}^{\frac{1}{4}}$ and $\mean{\opnorm{\boldsymbol{D}^2L^{-1}F^{(j)}}^4}^{\frac{1}{4}}\leq \frac{1}{2}\mean{\opnorm{\boldsymbol{D}^2F^{(j)}}^4}^{\frac{1}{4}}$.

\bigskip

{\bf $\boldsymbol{\bullet\ }$ If $\boldsymbol{\nu\not\equiv0}$:}\\
In this case $F^{(j)}$ is restricted to a fixed Wiener-Poisson chaos, i.e., $F^{(j)}=I_{q_j}(f_j)$ and $-\MDer L^{-1}F^{(j)}=\frac{1}{q_j}\MDer F^{(j)}$. Using this we get $\mean{\Hnorm{\boldsymbol{D}L^{-1}F}^4}^{\frac{1}{4}}=\frac{1}{q_j}\mean{\Hnorm{\boldsymbol{D}F}^4}^{\frac{1}{4}}$, $\mean{\opnorm{\boldsymbol{D}^2L^{-1}F^{(j)}}^4}^{\frac{1}{4}}=\frac{1}{q_j}\mean{\opnorm{\boldsymbol{D}^2F^{(j)}}^4}^{\frac{1}{4}}$, and $\mean{\Hnorm{\Hprod{x\boldsymbol{D}^2F^{(i)}}{-\boldsymbol{D}^2L^{-1}F^{(j)}}}^2}^{\frac{1}{2}}=\frac{1}{q_j}\mean{\Hnorm{\Hprod{x}{\boldsymbol{D}^2F^{(i)}\boldsymbol{D}^2F^{(j)}}}^2}^{\frac{1}{2}}$.

\bigskip

\noindent In both cases there is a constant $\boldsymbol{C}$ such that
$$\sum_{i,j=1}^d\mean{\Hnorm{\boldsymbol{D}G^{i,j}_F}^2}^{\frac{1}{2}}\hspace{-.2cm}\leq \boldsymbol{C}\left(\sum_{i,j=1}^d\mean{\opnorm{\boldsymbol{D}^2F^{(i)}}^4}^{\frac{1}{4}}\mean{\Hnorm{\boldsymbol{D}F^{(j)}}^4}^{\frac{1}{4}}+\sum_{i,j=1}^d\mean{\Hnorm{\Hprod{x}{\boldsymbol{D}^2F^{(i)}\boldsymbol{D}^2F^{(j)}}}^2}^{\frac{1}{2}}\right),$$
proving \eqref{ineqqchaos1}. In order to show \eqref{ineqqchaos2}, just invoke the proof of Proposition 4.1 in \cite{Nourdin}, which remains valid in this framework, to get $\opnorm{\boldsymbol{D}^2F^{(i)}}^4\leq \HHnorm{\boldsymbol{D}^2F^{(i)}\otimes_1\boldsymbol{D}^2F^{(i)}}^2$.
\end{proof}

\pagebreak

\begin{preremark}\label{conditions}
This corollary basically says that if we want to show gaussian convergence for a family of random vectors $F_T=(F^{(1)}_T,\dots,F^{(d)}_T)$ (living in a fixed chaos if $\nu\not\equiv0$) it is sufficient to check the following conditions {\bf for all $\boldsymbol{i}$}:
\begin{enumerate}

\item {\bf Expectation of the First Derivative Norm:}
\begin{align}\label{2cond1}
\mean{\Hnorm{\boldsymbol{D}F^{(i)}_T}^4} = O(1)\ \text{ as }\ T\to\infty.
\end{align}

\item {\bf Expectation of the First Derivative Cube:}
\begin{align}\label{2cond2}
\mean{\Hprod{\abs{x}}{\abs{\boldsymbol{D}F^{(i)}_T}^3}}\rightarrow0\ \text{ as } \ T\rightarrow\infty.
\end{align}

\item {\bf Expectation of the Contraction Norm:}
\begin{align}\label{2cond3}
\mean{\HHnorm{\boldsymbol{D}^2F^{(i)}_T\otimes_1\boldsymbol{D}^2F^{(i)}_T}^2}\rightarrow0\ \text{ as } \ T\rightarrow\infty.
\end{align}

\item {\bf Expectation of the Second Derivative Norm:}
\begin{align}\label{2cond4}
\mean{\Hnorm{\Hprod{x}{\boldsymbol{D}^2F^{(i)}_T\boldsymbol{D}^2F^{(j)}_T}}}\rightarrow0\ \text{ as } \ T\rightarrow\infty.
\end{align}

\item {\bf Existence of the Variance}
\begin{align}\label{2cond5}
\Var{F_T}\rightarrow \Sigma\ \text{ exists as } T\to\infty.
\end{align}

\end{enumerate}
Due to the Gaussian Poincar\'e inequality, $\Var{F^{(i)}_T}\leq \mean{\Hnorm{\boldsymbol{D}F^{(i)}_T}^2}\leq\sqrt{\mean{\Hnorm{\boldsymbol{D}F_T}^4}}$, so the variance of $F^{(i)}_T$ will go to 0 if the expectation of the first Malliavin derivative norm goes to 0. This is why condition (\ref{2cond1}) is necessary, and the convergence to zero of the distance relies on conditions (\ref{2cond2}), (\ref{2cond3}) and (\ref{2cond4}).
\end{preremark}

\section{Special Cases and Applications}

\subsection{The Wiener Space Case:\\Linear functionals of Gaussian-subordinated fields}

When we are working in this space the jump size is always zero, that is $\nu\equiv0$, so the upper bound for the Wasserstein distance becomes
\begin{align}\label{wienercase}
d_{\Wclass}(Z,F)\leq \boldsymbol{k}\mean{\HSnorm{\Sigma-\gF(F)}},
\end{align}
which coincides perfectly with the bounds computed in \cite{Nourdin0} and \cite{Nourdinmult}.

\bigskip

Since Corollary \ref{mainqchaos}, in this space, is true for all $F\in\mathbb{D}^{2,4}$ and not just for functionals in a fixed Wiener chaos, the authors of \cite{Nourdin} proved a very useful central limit theorem for linear functionals of Gaussian-subordinated fields. Before stating it, some notation is introduced: Let $X_t$ be a centered Gaussian stationary process and define $C(t)=\mean{X_0X_t}=\mean{X_sX_{t+s}}$, its covariance function. By Remark \ref{hilbertderivative}, we know that the Malliavin derivative of $X_t$ is well defined. Let $T>0$, $Z\sim \mathcal{N}\bigl(0,C(0)\bigr)$ and $\ f:\RR\rightarrow \RR$ be a real function of class $\mathcal{C}^2$ not constant such that $\mean{|f(Z)|}<\infty$ and $\mean{|f''(Z)|^4}<\infty$. In order to simplify the notation, the following random sequence is defined,
$$F_T := T^{-\frac{1}{2}}\int_0^T\bigl(f(X_t)-\mean{f(Z)}\bigr)\,dt.$$
Their result is stated as follows,
\begin{lemma}\label{finitecase}
Suppose that $\int_\RR \abs{C(t)}\,dt<\infty$, and assume that $f$ is a {\bf symmetric} real valued function. Then $\lim_{T\rightarrow\infty}\Var{F_T}:=\Sigma^2\in(0,\infty)$ exists and as $T\rightarrow\infty$
$$F_T\stackrel{\text{law}}{\longrightarrow}N\sim\mathcal{N}(0,\Sigma^2).$$
\end{lemma}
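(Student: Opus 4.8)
The plan is to apply the second order Poincaré inequality of Corollary~\ref{mainqchaos} (in its Wiener-space form, with $\nu\equiv0$, $d=1$, and no fixed-chaos restriction) to $F_T$, which lies in $\mathbb{D}^{2,4}$ because $f\in\mathcal{C}^2$ with $\mean{|f''(Z)|^4}<\infty$. By Remark~\ref{hilbertderivative} the Hilbert space is the closure of step functions under the inner product $\Hprod{1_{[0,t]}}{1_{[0,s]}}=C(t-s)$, and $\MDer X_t=1_{[0,t]}$; hence by the chain rule $\MDer F_T=T^{-1/2}\int_0^T f'(X_t)1_{[0,t]}\,dt$ and $\MDer^2 F_T=T^{-1/2}\int_0^T f''(X_t)1_{[0,t]}^{\otimes2}\,dt$. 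So after checking the two conditions that survive in the Wiener case --- boundedness of $\mean{\Hnorm{\MDer F_T}^4}$ and convergence to zero of $\mean{\HHnorm{\MDer^2F_T\otimes_1\MDer^2F_T}^2}$ --- the distance $d_{\Wclass}(N,F_T)\to0$ follows, which implies convergence in law. The existence of $\lim_T\Var{F_T}=:\Sigma^2$ must be established separately: by Fubini, $\Var{F_T}=T^{-1}\int_0^T\int_0^T \Cov{f(X_s)}{f(X_t)}\,ds\,dt$, and using the Hermite expansion \eqref{decomp} of $f$ together with \eqref{covf}, $\Cov{f(X_s)}{f(X_t)}=\sum_{q\geq1}c_q^2 q!\,C(t-s)^q$, whose absolute value is bounded by $\mathrm{const}\cdot|C(t-s)|$ since $\sum_q c_q^2 q!=\Var{f(Z)}<\infty$ and $|C|\leq C(0)$; the standard computation then gives $\Sigma^2=\sum_{q\geq1}c_q^2 q!\int_\RR C(t)^q\,dt$, which is finite by dominated convergence and integrability of $C$.

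The heart of the argument is the two Malliavin estimates. For the first moment condition, expand
\begin{align*}
\mean{\Hnorm{\MDer F_T}^4}=\frac{1}{T^2}\mean{\left(\int_0^T\!\!\int_0^T f'(X_s)f'(X_t)C(t-s)\,ds\,dt\right)^{\!2}},
\end{align*}
bound $|f'(X_s)f'(X_t)f'(X_u)f'(X_v)|$ in $\Lspa^1$ by Cauchy-Schwarz (using $\mean{f'(Z)^4}<\infty$, which follows from $\mean{f''(Z)^4}<\infty$ and non-constancy, or is assumed implicitly), and reduce to $T^{-2}\int_{[0,T]^4}|C(t-s)||C(v-u)|\,ds\,dt\,du\,dv$; since $\int_\RR|C|<\infty$ each pair integral contributes a factor $O(T)$, giving $O(1)$. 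For the contraction condition, the key inequality is $\HHnorm{\MDer^2 F_T\otimes_1\MDer^2 F_T}^2\le \mathrm{const}\cdot T^{-2}\int_{[0,T]^4}|f''(X_{t_1})f''(X_{t_2})f''(X_{t_3})f''(X_{t_4})|\,|C(t_1-t_2)|\,|C(t_2-t_3)|\,|C(t_3-t_4)|\,|C(t_4-t_1)|\,\prod dt_i$ (the ``cyclic'' structure coming from one contraction in $\hilbert$ plus the $\hilbert^{\otimes2}$-norm); applying $\mean{|f''(X_{t_1})\cdots f''(X_{t_4})|}\le\mean{f''(Z)^4}$ and then estimating $T^{-2}\int_{[0,T]^4}$ of a product of four $|C|$'s arranged in a cycle by $\mathrm{const}\cdot T^{-2}\cdot T\cdot(\sup|C|)\cdot(\int_\RR|C|)^2=O(T^{-1})\to0$. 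Here is precisely where the symmetry hypothesis on $f$ enters: for general $f$ the Hermite rank is $1$ and $\MDer^2 F_T$ also picks up the $q=1$ component, which does not decay; symmetry forces $c_1=0$ (odd Hermite coefficients vanish), so the lowest surviving Hermite order in $f$ is $2$, which is exactly what makes the double integral converge. I would make this rigorous by splitting $f=\sum_{q\ge2}c_q H_q$ and noting $\MDer^2 F_T=T^{-1/2}\int_0^T f''(X_t)1_{[0,t]}^{\otimes2}dt$ with $f''=\sum_{q\ge2}c_q q(q-1)H_{q-2}(X_t)/\cdots$ having Hermite rank $0$ but $f'$ having Hermite rank $1$; the genuinely delicate bookkeeping is that the contraction $\MDer^2F_T\otimes_1\MDer^2F_T$ still involves enough powers of $C$ on disjoint time-lags to gain the extra $T^{-1}$.

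The main obstacle I anticipate is the contraction estimate \eqref{2cond3}: controlling $\mean{\HHnorm{\MDer^2 F_T\otimes_1\MDer^2 F_T}^2}$ requires carefully unwinding the $\otimes_1$ contraction in the abstract Hilbert space $\hilbert$ (whose inner product is $C(\cdot-\cdot)$ on indicators, so $\Hprod{1_{[0,t]}}{1_{[0,s]}}=C(t-s)$ and more generally integrals of $C$ against indicator products) and then bounding a quadruple time-integral of a cyclic product of covariance values by the $\Lspa^1(\RR)$-norm of $C$ --- this is the step where integrability of $C$ and the symmetry of $f$ do all the work, and where one must be careful that after taking expectations via the crude bound $\mean{|f''\cdots|}\le\mean{f''(Z)^4}$ nothing worse than $O(T^{-1})$ remains. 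A secondary technical point is justifying the interchange of $\MDer$ with $\int_0^T\cdot\,dt$ (an $\hilbert$-valued, resp.\ $\hilbert^{\otimes2}$-valued, Bochner integral) and the membership $F_T\in\mathbb{D}^{2,4}$; these follow from closability of $\MDer$ and the moment bound on $f''$, so they are routine but should be stated. Once both conditions hold, Corollary~\ref{mainqchaos} gives $d_{\Wclass}(N,F_T)\le \boldsymbol{C}\bigl(\mean{\Hnorm{\MDer F_T}^4}^{1/4}\mean{\HHnorm{\MDer^2F_T\otimes_1\MDer^2F_T}^2}^{1/4}\bigr)\to0$ together with $\Var{F_T}\to\Sigma^2\in(0,\infty)$, and convergence in law follows since $d_{\Wclass}$ metrizes convergence in distribution (after noting $\Sigma^2>0$, which holds because $f$ is non-constant so some $c_q\ne0$ and $\int_\RR C^q\,dt$ can be shown positive for the relevant $q$ --- or, more carefully, $\Var{F_T}$ is eventually bounded below).
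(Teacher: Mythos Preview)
The paper does not actually prove Lemma~\ref{finitecase}; it is quoted as the result of \cite{Nourdin} and then \emph{extended} in Theorem~\ref{main2}. However, the proof of Theorem~\ref{main2} contains exactly the computations you would need for the integrable-covariance case (just set $\widetilde V(T)=T$), so your overall strategy --- apply Corollary~\ref{mainqchaos} with $\nu\equiv0$, compute $\MDer F_T$ and $\MDer^2 F_T$ via the chain rule and Remark~\ref{hilbertderivative}, then verify \eqref{2cond1}, \eqref{2cond3}, \eqref{2cond5} --- is correct and is the approach of both \cite{Nourdin} and the paper.

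There is, however, a genuine misunderstanding in your proposal about where the symmetry hypothesis enters. You write that symmetry is needed because ``for general $f$ the Hermite rank is $1$ and $\MDer^2 F_T$ also picks up the $q=1$ component, which does not decay.'' This is wrong on two counts. First, the $q=1$ term $c_1H_1(x)=c_1x$ has zero second derivative, so $f''$ never sees $c_1$; $\MDer^2 F_T$ is insensitive to the Hermite rank of $f$. Second, the contraction bound goes through for \emph{any} $f$ with $\mean{|f''(Z)|^4}<\infty$: following the paper's computation (with $\widetilde V(T)=T$),
\[
\mean{\HHnorm{\MDer^2F_T\otimes_1\MDer^2F_T}^2}\;\leq\; 8\,\mean{|f''(Z)|^4}\,C(0)\,T^{-2}\cdot T\cdot\Bigl(\int_0^\infty|C|\Bigr)^3 \;=\;O(T^{-1}),
\]
with no symmetry used. (Your sketch of this bound also has a bookkeeping slip: after fixing one free variable you must integrate \emph{three} of the four $|C|$ factors and bound the remaining one by $C(0)$, giving $(\int|C|)^3$, not $(\int|C|)^2$.) The place where symmetry of $f$ is actually used is to guarantee $\Sigma^2>0$: since $\Sigma^2=\sum_{q\geq1}c_q^2q!\int_\RR C(t)^q\,dt$, odd powers of $C$ can integrate to a negative number and could in principle cancel the positive even contributions. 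If $f$ is even, all odd $c_q$ vanish, every surviving term has $C(t)^q\geq0$, and non-constancy of $f$ gives some $c_q\neq0$ with $q$ even, hence $\Sigma^2>0$. You should reorganize the argument accordingly: drop the symmetry discussion from the contraction step and instead invoke it when proving strict positivity of the limiting variance.
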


Our goal in this subsection is to extend this result to the case when $\int_\RR \abs{C(t)}\,dt=\infty$. This is achievable under some conditions on the decay rate of the covariance. In fact, it is very handy that for this functional the conditions (\ref{2cond1})-(\ref{2cond5}) reduce to just one condition on the covariance of the underlying stationary Gaussian process $X_t$. Let $V(T)$ be a strictly positive continuous function with $V(T)\rightarrow0$ as $T\to\infty$ such that either $TV(T)\rightarrow0$ or $V\in\mathcal{C}^1$ and $TV'(T)\rightarrow0$ as $T\rightarrow\infty$. The following is the condition on the covariance that replaces the five conditions in Remark \ref{conditions}.\\

\noindent{\bf Condition $\boldsymbol{\ast}$:}
Either $\int_\RR \abs{C(t)}\,dt<\infty$ or $V(T)$ (with the above characteristics) exists such that,
$$\frac{C(T)}{V(T)}\xrightarrow[T\rightarrow \infty]{}M\neq0.$$
$V(T)$ represents the decay rate for the covariance function.
Consider the following function
\begin{align*}
\widetilde{V}(T)=\begin{cases}
T & \text{if } \ \int_0^\infty\abs{C(x)}\,dx<\infty\\
\int_0^T\int_0^yV(x)\,dx\,dy & \text{if } \ \int_0^\infty\abs{C(x)}\,dx=\infty
\end{cases}.
\end{align*}
Let $\mathcal{M}_C:=\{f\in\mathcal{C}^2\ \mid f \text{ is symmetric if } \int_\RR \abs{C(t)}\,dt<\infty \text{ or } \mean{f(Z)Z}\neq0 \text{ if } \int_\RR \abs{C(t)}\,dt=\infty\}$ and rewrite the functional $F_T$ as follows,
$$F_T := \widetilde{V}(T)^{-\frac{1}{2}}\int_0^T\bigl(f(X_t)-\mean{f(Z)}\bigr)\,dt.$$

\begin{theorem}\label{main2}
Suppose that condition $\ast$ is verified by $C(t)$ and that $f\in\mathcal{M}_C$. Then $\lim_{T\rightarrow\infty}\text{Var}[F_T]:=\Sigma^2\in(0,\infty)$ exists and as $T\rightarrow\infty$
$$F_T\stackrel{\text{law}}{\longrightarrow}N\sim\mathcal{N}(0,\Sigma^2).$$
Furthermore, if $\int_\RR \abs{C(t)}\,dt=\infty$, then $\Sigma^2=2M\bigl(\mean{f(Z)Z}\bigr)^2$.
\end{theorem}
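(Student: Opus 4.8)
The plan is to expand $f$ in Hermite polynomials, read off the resulting Wiener chaos decomposition of $F_T$, verify the five conditions of Remark \ref{conditions}, and then invoke Corollary \ref{mainqchaos} in the Wiener case --- where it is available for every element of $\mathbb{D}^{2,4}$, not only for a fixed chaos, and where the jump terms are absent. Normalizing so that $C(0)=1$, write $f(x)=\mean{f(Z)}+\sum_{q\ge1}c_qH_q(x)$ as in \eqref{decomp}, with $c_qq!=\mean{f(Z)H_q(Z)}$. By Remarks \ref{hilbertderivative} and \ref{hermiteintegral} one has $X_t=W(1_{[0,t]})$ and $H_q(X_t)=I_q(1_{[0,t]}^{\otimes q})$, so that
\[
F_T=\sum_{q\ge1}I_q(g_{q,T}),\qquad g_{q,T}:=c_q\,\widetilde{V}(T)^{-1/2}\int_0^T1_{[0,t]}^{\otimes q}\,dt .
\]
If $\int_\RR\abs{C(t)}\,dt<\infty$ and $f$ is symmetric the odd coefficients vanish and the sum starts at $q=2$; if $\int_\RR\abs{C(t)}\,dt=\infty$, the requirement $f\in\mathcal{M}_C$ is precisely $c_1=\mean{f(Z)Z}\neq0$.

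Next I compute the variance: by \eqref{covf}, $\Var{F_T}=\sum_{q\ge1}c_q^2\,q!\,\widetilde{V}(T)^{-1}\int_0^T\!\int_0^TC(t-s)^q\,ds\,dt$. In the summable case $\widetilde{V}(T)=T$ and this is exactly the situation of Lemma \ref{finitecase}, so $\Sigma^2=\sum_{q\ge2}c_q^2\,q!\int_\RR C(u)^q\,du\in(0,\infty)$. In the non-summable case one uses $\int_0^T\!\int_0^TC(t-s)^q\,ds\,dt=2\int_0^T\!\int_0^yC(x)^q\,dx\,dy$ together with Condition $\ast$ --- which forces $C(x)\sim MV(x)$ and $\int_0^\infty V=\infty$, hence $T=o(\widetilde{V}(T))$ and $\int_0^T\!\int_0^yC(x)\,dx\,dy\sim M\widetilde{V}(T)$ --- so that the $q=1$ term converges to $2Mc_1^2$ (necessarily positive), while for $q\ge2$, since $\abs{C}^{q-1}\to0$ one has $\abs{C}^q=o(V)$ and therefore $\int_0^T\!\int_0^TC(t-s)^q\,ds\,dt=o(\widetilde{V}(T))$; a dominated-convergence argument over $q$ (using $\abs{C}^q\le\abs{C}$ and $\sum_qc_q^2q!<\infty$) then yields $\Var{F_T}\to\Sigma^2=2M\bigl(\mean{f(Z)Z}\bigr)^2\in(0,\infty)$. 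This establishes \eqref{2cond5} and the stated value of $\Sigma^2$.

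It remains to check the Malliavin conditions; since $\nu\equiv0$, conditions \eqref{2cond2} and \eqref{2cond4} are vacuous. For \eqref{2cond1}: the first-chaos part of $\MDer F_T$ is the deterministic element $c_1\widetilde{V}(T)^{-1/2}\int_0^T1_{[0,t]}\,dt$, of squared norm $c_1^2\widetilde{V}(T)^{-1}\int_0^T\!\int_0^TC(t-s)\,ds\,dt\to2Mc_1^2$, while the chaoses $q\ge2$ contribute to $\mean{\Hnorm{\MDer F_T}^2}$ a quantity tending to $0$ by the previous integral estimates (now weighted by an extra factor $q$, still summable since $\sum_q q\cdot q!c_q^2=\mean{f'(Z)^2}<\infty$); truncating the chaos expansion at a fixed level, applying hypercontractivity to the finitely many retained chaoses, and using a uniform-in-$T$ $\mathbb{L}^2$ bound on the tail upgrades this to $\mean{\Hnorm{\MDer F_T}^4}=O(1)$. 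For \eqref{2cond3}: $\MDer^2F_T$ involves only the chaoses $q\ge2$, and $\mean{\HHnorm{\MDer^2F_T\otimes_1\MDer^2F_T}^2}$ is a finite combination of iterated integrals of products of powers of $\abs{C}$ over simplices in $[0,T]^m$, each of which is $O(\widetilde{V}(T))$, whereas the prefactor is $\widetilde{V}(T)^{-2}$, so the whole expression tends to $0$. Since $f\in\mathcal{C}^2$ with $\mean{\abs{f''(Z)}^4}<\infty$ ensures $F_T\in\mathbb{D}^{2,4}$, Corollary \ref{mainqchaos} --- applied with the exact variance $\Var{F_T}$ and followed by the elementary comparison of $\mathcal{N}(0,\Var{F_T})$ with $\mathcal{N}(0,\Sigma^2)$ --- gives $d_{\Wclass}(N,F_T)\to0$; in particular $F_T\stackrel{\text{law}}{\longrightarrow}N\sim\mathcal{N}(0,\Sigma^2)$, with a rate governed by \eqref{ineqqchaos2}.

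The main obstacle is condition \eqref{2cond3} (and, secondarily, the $\mathbb{L}^4$ upgrade in \eqref{2cond1}): the normalization $\widetilde{V}(T)$ is tuned so that the first-chaos component of $F_T$ has a finite nonzero limiting variance, and one must verify that it is nonetheless large enough to annihilate the second-derivative contraction coming from \emph{all} the higher chaoses --- equivalently, that every iterated integral of products of powers of $C$ entering $\mean{\HHnorm{\MDer^2F_T\otimes_1\MDer^2F_T}^2}$ grows strictly more slowly than $\widetilde{V}(T)^2$. This is exactly where the precise form of Condition $\ast$ enters (through $\abs{C}^{q-1}\to0$, i.e. $\abs{C}^q=o(V)$, and $\widetilde{V}(T)/T\to\infty$), and it is also the step where interchanging the limit $T\to\infty$ with the sum over the chaos index requires care.
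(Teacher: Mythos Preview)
Your overall strategy---check the conditions of Remark \ref{conditions} and invoke Corollary \ref{mainqchaos} in the Wiener case---matches the paper's, and your variance computation is essentially the same (the paper packages the L'H\^opital steps into Proposition \ref{computations}). The genuine difference is in how you attack conditions \eqref{2cond1} and \eqref{2cond3}. You work through the chaos decomposition of $F_T$ and then need hypercontractivity, truncation, and an interchange of $T\to\infty$ with the sum over the chaos index; the paper instead applies the chain rule directly to obtain the closed expressions
\[
\MDer F_T=\widetilde{V}(T)^{-1/2}\!\int_0^T f'(X_t)\,1_{[0,t]}\,dt,\qquad
\MDer^2 F_T=\widetilde{V}(T)^{-1/2}\!\int_0^T f''(X_t)\,1_{[0,t]}^{\otimes 2}\,dt.
\]
With these in hand, H\"older and stationarity give $\mean{\Hnorm{\MDer F_T}^4}\le \mean{|f'(Z)|^4}\bigl(\widetilde{V}(T)^{-1}\!\int_{[0,T]^2}|C(t-s)|\,ds\,dt\bigr)^2=O(1)$ by Proposition \ref{computations}(2)---no hypercontractivity, no truncation, no $\mathbb{L}^4$ upgrade. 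Likewise $\mean{\HHnorm{\MDer^2F_T\otimes_1\MDer^2F_T}^2}$ is bounded by $\mean{|f''(Z)|^4}$ times a \emph{single} four-fold integral of $|C(t-s)C(u-v)C(t-u)C(s-v)|$, which after a change of variables and the crude bound $|C|\le C(0)$ on one factor is dominated by a constant times $\widetilde{V}(T)^{-2}T\bigl(\int_0^T|C|\bigr)^3$; Proposition \ref{computations}(3) gives the vanishing and even the rate.

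So the obstacles you correctly flag in your last paragraph simply do not arise in the paper's argument. Your route can in principle be completed, but note that your claim that the contraction norm is ``a finite combination of iterated integrals'' is not literally true without a truncation you do not carry out for \eqref{2cond3}, and the bound ``each of which is $O(\widetilde{V}(T))$'' is both vague and weaker than what is actually needed (and obtained) via the direct chain-rule computation.
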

Before tackling this theorem, we need to verify some facts that will simplify the proof.
\begin{prop}\label{computations}
Suppose that $\int_\RR \abs{C(t)}\,dt=\infty$. Then as $T\rightarrow\infty$,
\begin{enumerate}

\item $\bigl(\int_0^TV(x)\,dx\bigr)^{-1}\int_0^T\abs{C(t)}\,dt=O(1)$.

\item $\widetilde{V}(T)^{-1}\int_{[0,T]^2}\abs{C(t-s)}\,ds\,dt=O(1)$.

\item
\begin{itemize}

\item If $TV(T)\rightarrow0$:\\
$\widetilde{V}(T)^{-2}T\bigl(\int_0^T\abs{C(t)}\,dt\bigr)^3=O\bigl(\max\{V(T),TV(T)^2\bigl(\int_0^TV(x)\,dx\bigr)^{-1}\}\bigr)$.

\item If $TV(T)\nrightarrow0$ and $TV'(T)\rightarrow0$\\
$\widetilde{V}(T)^{-2}T\bigl(\int_0^T\abs{C(t)}\,dt\bigr)^3=O\bigl(\max\{V(T),TV'(T)\}\bigr)$.

\end{itemize}

\item For fixed $q\geq1$:
\begin{align*}
\widetilde{V}(T)^{-1}\int_{[0,T]^2}C(t-s)^q\,ds\,dt\rightarrow 2M1_{\{q=1\}}=
\begin{cases}
2M & \text{if } \ q=1\\
0 & \text{if } \ q\neq 1
\end{cases}.
\end{align*}

\end{enumerate}
\end{prop}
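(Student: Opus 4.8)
The plan is to establish each of the four claims separately, in increasing order of difficulty, exploiting the fact that under Condition $\ast$ (with $\int_\RR |C(t)|\,dt=\infty$) we have $C(T)/V(T)\to M\neq0$, so that $C(t)$ is, up to a constant, comparable to $V(t)$ for large $t$.

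For claim (1), I would observe that since $C(T)/V(T)\to M$, there is $T_0$ and constants $0<c_1\le c_2$ with $c_1 V(t)\le |C(t)|\le c_2 V(t)$ for $t\ge T_0$; the contribution of $[0,T_0]$ to $\int_0^T|C(t)|\,dt$ is a fixed constant (using $C\in L^1_{\rm loc}$, which follows from $C$ continuous), while $\int_{T_0}^T V(x)\,dx\to\infty$ because $\int_0^\infty V = \infty$ (otherwise $C$ would be integrable). Dividing, the quotient is bounded. For claim (2), I would use Fubini and the substitution $u=t-s$ to rewrite $\int_{[0,T]^2}|C(t-s)|\,ds\,dt = 2\int_0^T (T-u)|C(u)|\,du \le 2T\int_0^T|C(u)|\,du$; then combine with claim (1) and the definition $\widetilde V(T)=\int_0^T\int_0^y V(x)\,dx\,dy$, noting that $\int_0^T\int_0^y V\,dx\,dy \ge$ (roughly) $\tfrac{T}{2}\int_0^{T/2} V$ or a similar lower bound obtained by restricting $y\in[T/2,T]$ — so $\widetilde V(T)$ grows at least like $T\int_0^T V$ up to constants, which kills the $T\int_0^T|C|$ numerator. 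A cleaner route for the lower bound on $\widetilde V(T)$: since $V(x)$ need not be monotone, I would instead just compare $\int_{[0,T]^2}|C(t-s)|\,ds\,dt$ directly to $\int_{[0,T]^2}|C(t-s)|\,ds\,dt$ rewritten via the inner double integral and use that $2\int_0^T(T-u)|C(u)|\,du$ and $\widetilde V(T)=\int_0^T\int_0^y V$ are, via claim (1) applied pointwise in the upper limit, of the same order.

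Claim (3) is the technical heart. From claim (1), $\int_0^T|C(t)|\,dt = O\big(\int_0^T V(x)\,dx\big)$, so the left side is $O\big(\widetilde V(T)^{-2} T (\int_0^T V)^3\big)$. I would then need sharp control of $\widetilde V(T)=\int_0^T\int_0^y V(x)\,dx\,dy$ from below in terms of $\int_0^T V$. Writing $W(y)=\int_0^y V(x)\,dx$, we have $\widetilde V(T)=\int_0^T W(y)\,dy$ and $W$ is nondecreasing, so $\widetilde V(T)\ge \int_{T/2}^T W(y)\,dy \ge \tfrac{T}{2} W(T/2)$. This gives $\widetilde V(T)^{-2} T (\int_0^T V)^3 = O\big(T^{-1} W(T)^3 / W(T/2)^2\big)$, and if $W(T)\asymp W(T/2)$ (which holds in the regularly varying cases, and can be forced from the hypotheses on $V$) this is $O(T^{-1} W(T))$; then I split into the two cases. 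If $TV(T)\to 0$: integrating by parts or estimating, $W(T) = \int_0^T V$; I would show $T^{-1} W(T) = O(\max\{V(T), T V(T)^2 / W(T)\})$ — roughly, either $W(T)$ is dominated by its tail increments, giving $W(T) = O(T V(T))$ hence $T^{-1}W(T)=O(V(T))$, or $W(T)$ is dominated by a long slowly-varying accumulation, in which case one balances against $TV(T)^2/W(T)$. If $TV(T)\not\to 0$ but $TV'(T)\to 0$: here $V$ itself is slowly varying in the sense that $\frac{d}{dT}(TV(T)) = V(T)+TV'(T)$, and I would use $TV'(T)\to0$ to compare $W(T)=\int_0^T V$ with $TV(T)$, getting $T^{-1}W(T)=O(\max\{V(T), TV'(T)\})$ by a Cesàro-type argument (the average of $V$ over $[0,T]$ versus its endpoint value $V(T)$, with the discrepancy controlled by $\int_0^T x|V'(x)|\,dx/T$ which is $O(TV'(T))$ under the monotone-derivative-decay assumption). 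This case analysis, and making the informal "dominated by tail vs. dominated by bulk" dichotomy rigorous without monotonicity of $V$, is where I expect the real work to lie.

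Finally, claim (4): for $q=1$ this is essentially the definition of $M$ plus a Cesàro/dominated-convergence argument — writing $\widetilde V(T)^{-1}\int_{[0,T]^2} C(t-s)\,ds\,dt = 2\widetilde V(T)^{-1}\int_0^T(T-u)C(u)\,du$, and using $C(u)/V(u)\to M$ together with $\widetilde V(T) = \int_0^T\int_0^y V = \int_0^T (T-u)V(u)\,du$ (by Fubini!) to recognize the ratio as a weighted average of $C(u)/V(u)$ against the probability measure proportional to $(T-u)V(u)\,du$ on $[0,T]$, which concentrates its mass on large $u$ as $T\to\infty$ (since $\int_0^\infty V=\infty$), hence converges to $M$; a factor of $2$ appears from symmetrizing the square. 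For $q\ge 2$: since $C(u)\to 0$ (as $C\in L^1$ would fail but $C(u)=O(V(u))\to0$), we have $|C(u)|^q \le \sup_{v\ge u}|C(v)|^{q-1}\cdot|C(u)| = o(|C(u)|)$ for large $u$, so $\int_0^T|C(u)|^q\,du = o\big(\int_0^T|C(u)|\,du\big) + O(1) = o\big(\int_0^T V\big)$, and then the same Fubini identity $\widetilde V(T)=\int_0^T(T-u)V(u)\,du \gtrsim T\int_{0}^{T/2}V(u)\,du$ shows $\widetilde V(T)^{-1}\int_{[0,T]^2}|C(t-s)|^q\,ds\,dt = \widetilde V(T)^{-1}\cdot 2\int_0^T(T-u)|C(u)|^q\,du \le \widetilde V(T)^{-1}\cdot 2T\int_0^T|C(u)|^q\,du \to 0$. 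The one subtlety to double-check is that the $O(1)$ local contributions near $u=0$ are genuinely negligible after dividing by $\widetilde V(T)\to\infty$, which is immediate.
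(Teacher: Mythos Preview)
Your approach is broadly correct for claims (1), (2), and (4) with $q=1$ --- in particular, the Fubini identity $\widetilde V(T)=\int_0^T(T-u)V(u)\,du$ and the resulting weighted-average interpretation for claim (4) is a nice observation. However, the paper's proof is dramatically shorter: it simply applies L'H\^opital's rule to each of the four ratios. For instance, claim (1) is just $\lim_T \frac{\int_0^T|C|}{\int_0^T V}=\lim_T\frac{|C(T)|}{V(T)}=|M|$, and claim (2) follows from writing $\int_{[0,T]^2}|C(t-s)|\,ds\,dt=2\int_0^T\int_0^t|C(x)|\,dx\,dt$ and applying L'H\^opital twice. This replaces your comparison estimates and lower bounds on $\widetilde V(T)$ entirely.

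Where your argument has a genuine gap is claim (3). You reduce to controlling $T W(T)^3/\widetilde V(T)^2$ with $W(T)=\int_0^T V$, but then invoke $W(T)\asymp W(T/2)$, which is a regular-variation statement not implied by the hypotheses (only $V\to0$ and either $TV(T)\to0$ or $TV'(T)\to0$ are assumed). The ``tail vs.\ bulk'' dichotomy you sketch cannot be made rigorous without something like this. The paper sidesteps the problem completely: it writes $\frac{T(\int_0^T V)^3}{\widetilde V(T)^2}$ and, in the first subcase, splits it as $\frac{(\int_0^T V)^2}{\widetilde V(T)}\cdot\frac{T\int_0^T V}{\widetilde V(T)}$ and applies L'H\^opital to each factor, getting $2V(T)$ and $1+\frac{TV(T)}{\int_0^T V}$ respectively; in the second subcase it differentiates the numerator and denominator directly (twice), and the derivative of the numerator produces exactly the terms $V(T)$ and $TV'(T)$ appearing in the claimed bound. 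No doubling or regular-variation comparison is needed.

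A smaller issue: in claim (4) for $q\ge2$ you pass to the crude bound $2T\int_0^T|C|^q$, but then need $T\int_0^T V/\widetilde V(T)=O(1)$, which again leads back to a $W(T)/W(T/2)$-type comparison. Either keep the weighted form $2\int_0^T(T-u)|C(u)|^q\,du$ and run your own weighted-average argument with $|C(u)|^q/V(u)\to0$, or --- as the paper does --- just apply L'H\^opital: $\lim_T \frac{2\int_0^T\int_0^t C(x)^q\,dx\,dt}{\widetilde V(T)}=2\lim_T\frac{C(T)^q}{V(T)}=2\lim_T\bigl(\tfrac{C(T)}{V(T)}\bigr)^q V(T)^{q-1}=0$.
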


\begin{proof}
The proof just involves simple applications of L'H$\hat{\text{o}}$pital's rule (L).
\begin{enumerate}

\item $$\lim_{T\rightarrow\infty}\frac{\int_0^T\abs{C(t)}\,dt}{\int_0^TV(x)\,dx}\stackrel{\text{L}}{=}\lim_{T\rightarrow\infty}\frac{\abs{C(T)}}{V(T)}=\abs{M}.$$

\item Notice first that $\int_{[0,T]^2}\abs{C(t-s)}\,ds\,dt=2\int_0^T\int_0^t\abs{C(x)}\,dx\,dt$ so
$$\lim_{T\rightarrow\infty}\frac{\int_{[0,T]^2}\abs{C(t-s)}\,ds\,dt}{\widetilde{V}(T)}=\lim_{T\rightarrow\infty}\frac{2\int_0^T\int_0^t\abs{C(x)}\,dx\,dt}{\int_0^T\int_0^yV(x)\,dx\,dy}\stackrel{\text{L}}{=}2\lim_{T\rightarrow\infty}\frac{\abs{C(T)}}{V(T)}=2\abs{M}.$$

\item $\lim_{T\rightarrow\infty}\frac{T\bigl(\int_0^T\abs{C(t)}\,dt\bigr)^3}{\widetilde{V}(T)^2}=\lim_{T\rightarrow\infty}\biggl(\underbrace{\frac{\int_0^T\abs{C(t)}\,dt}{\int_0^TV(x)\,dx}}_{=O(1)}\biggr)^3\lim_{T\rightarrow\infty}\frac{T\bigl(\int_0^TV(x)\,dx\bigr)^3}{\widetilde{V}(T)^2}$.
\begin{itemize}

\item If $TV(T)\rightarrow0$:
\begin{align*}
\lim_{T\rightarrow\infty}\frac{T\bigl(\int_0^TV(x)\,dx\bigr)^3}{\widetilde{V}(T)^2}&=\lim_{T\to\infty}\frac{\bigl(\int_0^TV(x)\,dx\bigr)^2}{\widetilde{V}(T)}\lim_{T\rightarrow\infty}\frac{T\int_0^TV(x)\,dx}{\widetilde{V}(T)}\\
&\stackrel{\text{L}}{=}\lim_{T\rightarrow\infty}\bigl(2V(T)\bigr)\lim_{T\rightarrow\infty}\biggl(1+\frac{TV(T)}{\int_0^TV(x)\,dx}\biggr)=2\lim_{T\rightarrow\infty}\biggl(V(T)+\frac{TV(T)^2}{\int_0^TV(x)\,dx}\biggr).
\end{align*}

\item If $TV(T)\nrightarrow0$ and $TV'(T)\rightarrow0$\\
\begin{align*}
\lim_{T\rightarrow\infty}\frac{T\bigl(\int_0^TV(x)\,dx\bigr)^3}{\widetilde{V}(T)^2}&\stackrel{\text{L}}{=}\lim_{T\rightarrow\infty}\frac{\bigl(\int_0^TV(x)\,dx\bigr)^2+3TV(T)\bigl(\int_0^TV(x)\,dx\bigr)}{2\widetilde{V}(T)}\\
&\stackrel{\text{L}}{=}\biggl(\lim_{T\to\infty}\frac{5V(T)+3TV'(T)}{2}+\lim_{T\rightarrow\infty}\frac{3TV(T)^2}{2\int_0^TV(x)\,dx}\biggr)\stackrel{\text{L}}{=}\lim_{T\rightarrow\infty}\biggl(4V(T)+\frac{9}{2}TV'(T)\biggr).
\end{align*}

\end{itemize}

\item If for $q>1$, either $\lim_{T\rightarrow\infty}\int_{[0,T]^2}C(t-s)^q\,ds\,dt<\infty \ \text{ or }\ \lim_{T\rightarrow\infty}\int_0^TC(x)^q\,dx<\infty$, then the result will follow trivially. So let's assume that both go to infinity as $T$ goes to infinity.
$$ \lim_{T\rightarrow\infty}\frac{\int_{[0,T]^2}C(t-s)^q\,ds\,dt}{\widetilde{V}(T)}=\lim_{T\rightarrow\infty}\frac{2\int_0^T\int_0^tC(x)^q\,dx\,dt}{\int_0^T\int_0^yV(x)\,dx\,dy}\stackrel{\text{L}}{=}2\lim_{T\rightarrow\infty}\frac{C(T)^q}{V(T)}=2\lim_{T\rightarrow\infty}\underbrace{\biggl(\frac{C(T)}{V(T)}\biggr)^q}
_{\rightarrow M^q}\underbrace{V(T)^{(q-1)}}_{\rightarrow 0 \text{ if } q>1}=2M1_{\{q=1\}}.$$

\end{enumerate}
\end{proof}
Proof of Theorem \ref{main2}.
\begin{proof}
Notice that if $\int_\RR\abs{C(t)}\,dt<\infty$ then Theorem \ref{main2} reduces to Lemma \ref{finitecase} and there is nothing left to prove. Assume then, that $\int_\RR\abs{C(t)}\,dt=\infty$. Due to Remark \ref{conditions}, it is enough to check that condition $\ast$ implies conditions (\ref{2cond1}), (\ref{2cond3}) and (\ref{2cond5}), because \eqref{2cond2} and \eqref{2cond4} trivially holds.
\begin{itemize}

\item {\bf Expectation of the First Derivative Norm:}\\
\begin{itemize}

\item First Malliavin Derivative:
$$\boldsymbol{D} F_T = \widetilde{V}(T)^{-\frac{1}{2}}\int_0^Tf'(X_t)1_{[0,t]}\,dt.$$

\item Norm of the First Malliavin Derivative:
$$\Hnorm{\boldsymbol{D} F_T}^2 = \widetilde{V}(T)^{-1}\int_{[0,T]^2}\hspace{-.7cm}f'(X_t)f'(X_s)\Hprod{1_{[0,t]}}{1_{[0,s]}} dtds = \widetilde{V}(T)^{-1}\int_{[0,T]^2}\hspace{-.7cm}f'(X_t)f'(X_s)C(t-s) \,dt\,ds.$$
Then,
$$\Hnorm{\boldsymbol{D} F_T}^4 = \widetilde{V}(T)^{-2}\int_{[0,T]^4}\hspace{-.7cm}f'(X_t)f'(X_s)f'(X_u)f'(X_v)C(t-s)C(u-v) \,dt\,ds\,du\,dv.$$

\item Expectation of the First Malliavin Derivative Norm:\\
By using H$\ddot{\text{o}}$lder (twice) on the expectation and by the stationarity of $X_t$ we have the bound,
$$\abs{\mean{f'(X_t)f'(X_s)f'(X_u)f'(X_v)}}\leq \mean{\abs{f'(Z)}^4},$$
finally recovering the power we get,
$$\mean{\Hnorm{\boldsymbol{D} F_T}^4}\leq \mean{|f'(Z)|^4}\underbrace{\biggl(\widetilde{V}(T)^{-1}\int_{[0,T]^2}\abs{C(t-s)} dtds\biggr)^2}_{=O(1) \text{ by Proposition \ref{computations}}}.$$

\end{itemize}
All this proves that,
\begin{align*}
\boxed{\mean{\Hnorm{\boldsymbol{D} F_T}^4}^{\frac{1}{4}}=O(1)\ \text{ as }\ T\to\infty.}
\end{align*}

\item {\bf Expectation of the Contraction Norm:}\\
In the same way we get,

\begin{itemize}

\item Second Malliavin Derivative:
$$\boldsymbol{D}^2 F_T = \widetilde{V}(T)^{-\frac{1}{2}}\int_0^Tf''(X_t)1_{[0,t]}^{\otimes2}\,dt.$$

\item Contraction of Order 1:
\begin{align*}
\boldsymbol{D}^2F_T\otimes_1\boldsymbol{D}^2F_T =& \widetilde{V}(T)^{-1}\int_{[0,T]^2}\hspace{-.7cm}f''(X_t)f''(X_s)1_{[0,t]}\otimes 1_{[0,s]}\left\langle 1_{[0,t]},1_{[0,s]}\right\rangle_{\hilbert} \,dt\,ds\\
=& \widetilde{V}(T)^{-1}\int_{[0,T]^2}\hspace{-.7cm}f''(X_t)f''(X_s)1_{[0,t]}\otimes 1_{[0,s]}C(t-s) \,dt\,ds.
\end{align*}

\item Norm of the Contraction:
\begin{align*}
\HHnorm{\boldsymbol{D}^2F_T\otimes_1\boldsymbol{D}^2F_T}^2 &= \widetilde{V}(T)^{-2}\int_{[0,T]^4}\hspace{-.7cm}f''(X_t)f''(X_s)f''(X_u)f''(X_v)C(t-s)C(u-v)\\
&\hspace{5cm}\times\left\langle 1_{[0,t]},1_{[0,u]}\right\rangle_{\hilbert}\left\langle 1_{[0,s]},1_{[0,v]}\right\rangle_{\hilbert} \,dt\,ds\,du\,dv\\
&= \widetilde{V}(T)^{-2}\int_{[0,T]^4}\hspace{-.7cm}f''(X_t)f''(X_s)f''(X_u)f''(X_v)C(t-s)C(u-v)C(t-u)C(s-v) \,dt\,ds\,du\,dv.
\end{align*}

\item Expectation of the Contraction Norm:\\
By using H$\ddot{\text{o}}$lder in the same way as above we get,
$$\mean{\HHnorm{\boldsymbol{D}^2F_T\otimes_1\boldsymbol{D}^2F_T}^2} \leq \mean{|f''(Z)|^4}\widetilde{V}(T)^{-2}\int_{[0,T]^4}\abs{C(t-s)C(u-v)C(t-u)C(s-v)} \,dt\,ds\,du\,dv.$$
Now, let's make the change of variable $y=(t-s,u-v,t-u,v)$, and let's denote the new region by $\widetilde{\Omega}\times[0,T]$. So,
\begin{align*}
\mean{\HHnorm{\boldsymbol{D}^2F_T\otimes_1\boldsymbol{D}^2F_T}^2} &\leq \mean{|f''(Z)|^4}\widetilde{V}(T)^{-2}\int_0^T\int_{\widetilde{\Omega}}\abs{C(y_1)C(y_2)C(y_3)C(y_2+y_3-y_1)} \,dy_1\,dy_2\,dy_3\,dv\\
&= \mean{|f''(Z)|^4}\widetilde{V}(T)^{-2}T\int_{\widetilde{\Omega}}\abs{C(y_1)C(y_2)C(y_3)C(y_2+y_3-y_1)} \,dy_1\,dy_2\,dy_3.
\end{align*}
We take into account that by Cauchy-Schwarz, for all $t\in\RR$,
$$\abs{C(t)}=\overbrace{\frac{\abs{\mean{X_0X_t}}}{\sqrt{\text{Var}[X_0]\text{Var}[X_t]}}}^{\leq 1}\overbrace{\sqrt{\text{Var}[X_0]\text{Var}[X_t]}}^{=C(0)}\leq C(0).$$
As it is clear that $\widetilde{\Omega}\subset[-T,T]^3$ and since the integrand is a non-negative even function, we can deduce that,
\begin{align*}
\mean{\HHnorm{\boldsymbol{D}^2F_T\otimes_1\boldsymbol{D}^2F_T}^2} &\leq \mean{|f''(Z)|^4}C(0)\widetilde{V}(T)^{-2}T\biggl(2\int_{[0,T]}\abs{C(y)}\,dy\biggr)^3\\
&= 8\mean{|f''(Z)|^4}C(0)\underbrace{\widetilde{V}(T)^{-2}T\biggl(\int_0^T\abs{C(y)}\,dy\biggr)^3}_{\xrightarrow[T\to\infty]{}0 \text{ by Proposition \ref{computations}}}.
\end{align*}

All this proves that,
\begin{itemize}

\item If $TV(T)\rightarrow0$:
\begin{align*}
\hspace{-2cm} \boxed{\mean{\HHnorm{\boldsymbol{D}^2F_T\otimes_1\boldsymbol{D}^2F_T}^2}^{\frac{1}{4}}=O\biggl(\max\left\{V(T),TV(T)^2\biggl(\int_0^TV(x)\,dx\biggr)^{-1}\right\}^{\frac{1}{4}}\biggr)\ \text{ as }\ T\to\infty.}
\end{align*}
\\
\item If $TV(T)\nrightarrow0$ and $TV'(T)\rightarrow0$:
\begin{align*}
\boxed{\mean{\HHnorm{\boldsymbol{D}^2F_T\otimes_1\boldsymbol{D}^2F_T}^2}^{\frac{1}{4}}=O\bigl(\max\{V(T),TV'(T)\}^{\frac{1}{4}}\bigr)\ \text{ as }\ T\to\infty.}
\end{align*}

\end{itemize}
\end{itemize}

\item {\bf Existence of the Variance:}\\
Since $f\in\mathcal{M}_C$ then $\mean{f(X_0)X_0}=\mean{f(Z)Z}\neq0$. Also $H_1(x)=x$, so the first Hermite constant in the expansion (\ref{decomp}) is not 0, i.e., $c_1=\mean{f(X_0)X_0}\neq0$. Using the formula (\ref{covf}) for the covariance of $f$ we get,
\begin{align*}
\text{Var}[F_T]&=\mean{\biggl(\widetilde{V}(T)^{-\frac{1}{2}}\int_0^T\bigl(f(X_t)-\mean{f(Z)}\bigr)\,dt\biggr)^2}=\widetilde{V}(T)^{-1}\int_{[0,T]^2}\text{Cov}\bigl[f(X_t),f(X_s)\bigr]\,dt\,ds\\
&=\widetilde{V}(T)^{-1}\int_{[0,T]^2}\sum_{q=1}^\infty c_q^2q!\bigl(\mean{X_tX_s}\bigr)^q\,dt\,ds=\sum_{q=1}^\infty c_q^2q!\widetilde{V}(T)^{-1}\int_{[0,T]^2}C(t-s)^q\,dt\,ds\\
&=c_1^2\underbrace{\frac{\int_{[0,T]^2}C(t-s)\,ds\,dt}{\widetilde{V}(T)}}_{\rightarrow 2M \text{ by Proposition \ref{computations}}}+\sum_{q=2}^\infty c_q^2q!\underbrace{\frac{\int_{[0,T]^2}C(t-s)^q\,ds\,dt}{\widetilde{V}(T)}}_{\rightarrow 0,\ \forall \ q \text{ by Proposition \ref{computations}}}\xrightarrow[T\rightarrow\infty]{}2Mc_1^2.
\end{align*}
All this proves that,
\begin{align*}
\boxed{\lim_{T\rightarrow\infty}\text{Var}[F_T]=2M\bigl(\mean{f(Z)Z}\bigr)^2\in(0,\infty)\ \text{ exists}.}
\end{align*}

\end{itemize}
Since the conditions were satisfied, Theorem \ref{main2} is proved.
\end{proof}

\begin{preremark}\label{quantrate2}\rm
Notice that during the proof of this theorem we could establish an estimate for the convergence rate to normality, i.e., if $\widetilde{Z}\sim\mathcal{N}(0,1)$, then, as $T\to\infty$, we have
\begin{itemize}

\item If $TV(T)\rightarrow0$:
$$d_W\biggl(\frac{F_T}{\sqrt{\text{Var}[F_T]}},\widetilde{Z}\biggr)=O\biggl(\max\left\{V(T),TV(T)^2\biggl(\int_0^TV(x)\, dx\biggr)^{-1}\right\}^{\frac{1}{4}}\biggr).$$

\item If $TV(T)\nrightarrow0$ and $TV'(T)\rightarrow0$:
$$d_W\biggl(\frac{F_T}{\sqrt{\text{Var}[F_T]}},\widetilde{Z}\biggr)=O\bigl(\max\{V(T),TV'(T)\}^{\frac{1}{4}}\bigr).$$

\end{itemize}
In fact, it coincides with the rate obtained in \cite{Nourdin} for the case $\int_\RR \abs{C(t)}\,dt<\infty$, when $V(T)=\frac{1}{T}$ and $d_W\biggl(\frac{F_T}{\sqrt{\text{Var}[F_T]}},\widetilde{Z}\biggr)=O(T^{-\frac{1}{4}})$.
\end{preremark}

\subsubsection{Examples}
According to Theorem \ref{main2} the only condition we need to check in order to apply the central limit theorem to $F_T$ is the decay rate of the covariance function for the underlying stationary Gaussian process $X_t$ (condition $\ast$). In fact, if the decay rate is $t^{-\alpha}$ then we can apply the CLT if $\alpha\in(0,1)\cup(1,2)$, because in the case $\alpha\in(1,2)$ the integral $\int_\RR C(t)\,dt$ is finite and in the case $\alpha\in(0,1)$ the same integral is infinite but $V(T)=T^{-\alpha}\in\mathcal{C}^1$ and $TV'(T)=-\alpha T^{-\alpha}\rightarrow0$ as $T\rightarrow\infty$.

\begin{enumerate}

\item {\bf Fractional Brownian Motion (fBm):}
\begin{defn}
A fractional Brownian motion (fBm) with Hurst parameter $H\in(0,1]$ is a $\mathbb{P}$-a.s. continuous, centered Gaussian process $(B^H_t)_{t\in\RR}$ with covariance structure given by
$$\mean{B^H_tB^H_s}=\frac{1}{2}\left(\abs{t}^{2H}+\abs{s}^{2H}-\abs{t-s}^{2H}\right).$$
\end{defn}
The fractional Brownian motion enjoys the property of having stationary increments (even if they are not independent), that is, $B_{t+s}^H-B^H_s\stackrel{\text{Law}}{=}B_t^H$, for all $t,s\in\RR$ and all $H\in(0,1)$. So, if $B^H_t$ is fBm then $X_t=B^H_{t+1}-B^H_t$ is a centered Gaussian stationary process with covariance function given by
$$C_1(T)=\mean{X_TX_0}=\mean{(B^H_{T+1}-B^H_T)(B^H_1-B^H_0)}=\frac{\abs{T+1}^{2H}+\abs{T-1}^{2H}-2T^{2H}}{2}.$$
Thus, if $H>\frac{1}{2}$,
$$\lim_{T\rightarrow\infty}T^{2-2H}C_1(T)=\lim_{T\rightarrow\infty}T^2\frac{\bigl(1+\frac{1}{T}\bigr)^{2H}+\bigl(1-\frac{1}{T}\bigr)^{2H}-2}{2}=H(2H-1)=M\in(0,\infty).$$
Then, the decay rate of its covariance function is $t^{2H-2}$, i.e.,
\begin{itemize}

\item $\int_\RR\abs{C(t)}\,dt<\infty$ if $H\leq\frac{1}{2}$,

\item $\int_\RR\abs{C(t)}\,dt=\infty$ if $H>\frac{1}{2}$.

\end{itemize}
Theorem \ref{main2} is applicable to the increments of fBm for all $H\in\bigl(0,\frac{1}{2}\bigr)\cup\bigl(\frac{1}{2},1\bigr)$, and
\begin{center}
$\widetilde{V}(T)^{-\frac{1}{2}}\int_0^T\bigl(f(X_t)-\mean{f(Z)}\bigr)\,dt\stackrel{\text{law}}{\longrightarrow}N\sim\mathcal{N}(0,\Sigma^2)$ as $T\rightarrow\infty.$
\end{center}

\item {\bf Ornstein-Uhlenbeck Driven by fBm:}
\begin{defn}
The fractional-driven Ornstein-Uhlenbeck process is, in an analogous way to the classical Ornstein-Uhlenbeck, the unique\footnote{Any other stationary solution is equal to $Y^H_t$ in distribution} stationary, almost surely continuous, centered Gaussian process $Y^H_t$ that solves the Langevin stochastic differential equation 
$$dY^H_t=-\lambda Y^H_tdt+\widetilde{\sigma}dB_t^H,$$
where $\widetilde{\sigma},\lambda>0$ are constants, and $B^H_t$ is a fractional Brownian motion. This (path-wise) solution with initial condition $Y^H_0=\widetilde{\sigma}\int_{-\infty}^0e^{\lambda u}\,dB^H_u$ can be written as $Y^H_t=\widetilde{\sigma}\int_{-\infty}^te^{-\lambda(t-u)}\,dB^H_u$, where the integral is the Riemann-Stieltjes integral.
\end{defn}
Therefore, $X_t=Y^H_t-\mean{Y_0^H}$ is a centered Gaussian stationary process.
In \cite{Cheridito}, the authors proved the following lemma,
\begin{lemma}\label{lema1}
Let $H\in\bigl(0,\frac{1}{2}\bigr)\cup\bigl(\frac{1}{2},1\bigr)$ and $N\in\NN$. Then as $T\rightarrow\infty$,
$$C_2(T)=\mean{X_TX_0}=\Cov{Y^H_T}{Y^H_0}=\frac{\widetilde{\sigma}^2}{2}\sum_{n=1}^N\lambda^{-2n}\biggl(\prod_{k=0}^{2n-1}(2H-k)\biggr)T^{2H-2n}+O(T^{2H-2N-2}).$$
\end{lemma}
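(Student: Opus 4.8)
The plan is to reduce $C_2(T)=\Cov{Y^H_T}{Y^H_0}$ to one explicit scalar integral and then read off the large-$T$ expansion by Taylor-expanding the power kernel; this is the route of \cite{Cheridito}, and I would organize it in three steps.

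First I would produce an integral representation of the covariance. Writing $\rho(T):=\Cov{Y^H_T}{Y^H_0}$ and using the Riemann--Stieltjes representation $Y^H_t=\widetilde{\sigma}\int_{-\infty}^t e^{-\lambda(t-u)}\,dB^H_u$ from the definition, I would apply, for $H\in\left(\tfrac12,1\right)$, the standard Wiener-integral covariance formula for fractional Brownian motion, $\mean{\bigl(\int f\,dB^H\bigr)\bigl(\int g\,dB^H\bigr)}=H(2H-1)\iint f(u)g(v)|u-v|^{2H-2}\,du\,dv$, with $f(u)=\widetilde{\sigma}e^{-\lambda(T-u)}\mathbf{1}_{(-\infty,T]}(u)$ and $g(v)=\widetilde{\sigma}e^{\lambda v}\mathbf{1}_{(-\infty,0]}(v)$. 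Substituting $u=T-a$, $v=-b$ with $a,b\ge0$ and then $r=a+b$, $s=a-b$ (Jacobian $\tfrac12$, with $r\ge|s|$) collapses the double integral to
$$\rho(T)=\frac{\widetilde{\sigma}^{2}\,H(2H-1)}{2\lambda}\int_{\RR}|T-s|^{2H-2}\,e^{-\lambda|s|}\,ds .$$
For $H\in\left(0,\tfrac12\right)$ the kernel $|T-s|^{2H-2}$ fails to be locally integrable, so I would first integrate by parts pathwise, $\int_{-\infty}^t e^{-\lambda(t-u)}\,dB^H_u=B^H_t-\lambda\int_{-\infty}^t e^{-\lambda(t-u)}B^H_u\,du$ (the boundary term at $-\infty$ vanishing since $e^{-\lambda(t-u)}$ beats the polynomial growth of $B^H$), and compute $\rho(T)$ directly from $\mean{B^H_uB^H_v}=\tfrac12\bigl(|u|^{2H}+|v|^{2H}-|u-v|^{2H}\bigr)$; using $B^H_0=0$ this yields finitely many absolutely convergent integrals of powers $|\cdot|^{2H}$ against exponentials, to which the expansion below applies term by term.

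Next I would extract the expansion. For $T$ large I would split $\int_{\RR}=\int_{|s|<T/2}+\int_{|s|\ge T/2}$; the second piece is $O\bigl(e^{-\lambda T/2}\bigr)$ and negligible. On $|s|<T/2$ I would write $|T-s|^{2H-2}=T^{2H-2}(1-s/T)^{2H-2}$ and insert the binomial expansion $(1-x)^{2H-2}=\sum_{j=0}^{2N-1}\binom{2H-2}{j}(-x)^{j}+R_{2N}(x)$ with $|R_{2N}(x)|\le C_N|x|^{2N}$ for $|x|\le\tfrac12$. Integrating term by term against $e^{-\lambda|s|}$, the odd powers vanish by symmetry while $\int_{\RR}s^{2m}e^{-\lambda|s|}\,ds=2(2m)!\,\lambda^{-(2m+1)}$, so the $j=2m$ term contributes $\widetilde{\sigma}^{2}H(2H-1)\binom{2H-2}{2m}(2m)!\,\lambda^{-(2m+2)}\,T^{2H-2-2m}$. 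The elementary identity $H(2H-1)\binom{2H-2}{2m}(2m)!=\tfrac12\prod_{k=0}^{2m+1}(2H-k)$ rewrites this as $\tfrac{\widetilde{\sigma}^{2}}{2}\lambda^{-2n}\bigl(\prod_{k=0}^{2n-1}(2H-k)\bigr)T^{2H-2n}$ with $n=m+1$, and letting $m$ run from $0$ to $N-1$ reproduces exactly the asserted finite sum.

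Finally, for the remainder: the term-by-term error equals $\tfrac{\widetilde{\sigma}^{2}H(2H-1)}{2\lambda}T^{2H-2}\int_{|s|<T/2}R_{2N}(s/T)e^{-\lambda|s|}\,ds$, which is bounded by $C\,T^{2H-2}\int_{\RR}|s/T|^{2N}e^{-\lambda|s|}\,ds=O\bigl(T^{2H-2N-2}\bigr)$, giving the claimed error term. I expect the main obstacle to be the non-integrable kernel for $0<H<\tfrac12$: there the clean formula of the first step breaks down and one must either carry out the integration-by-parts detour (expanding each of the resulting power-times-exponential integrals) or argue by analytic continuation in $H$ at the level of the expansion coefficients, which are polynomial in $H$; a secondary point is keeping the Taylor remainder uniform where $s/T$ is not small, which the dyadic cut at $|s|=T/2$ handles. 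The combinatorial identity in the second step is routine but is precisely what delivers the falling-factorial form of the coefficients, so it should be checked directly.
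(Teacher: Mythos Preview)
The paper does not give its own proof of this lemma: it quotes the result directly from \cite{Cheridito} and uses only the leading-order consequence $T^{2-2H}C_2(T)\to H(2H-1)\widetilde{\sigma}^2/\lambda^2$. Your proposal therefore cannot be compared with a proof in the paper, but it does follow the argument of the cited reference. The reduction for $H>\tfrac12$ to $\frac{\widetilde{\sigma}^{2}H(2H-1)}{2\lambda}\int_{\RR}|T-s|^{2H-2}e^{-\lambda|s|}\,ds$ via the change of variables $(a,b)\mapsto(r,s)$ and Fubini is correct, the binomial expansion with the symmetry argument and the identity $H(2H-1)\binom{2H-2}{2m}(2m)!=\tfrac12\prod_{k=0}^{2m+1}(2H-k)$ yield exactly the stated coefficients, and the remainder estimate via the cut at $|s|=T/2$ is sound. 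Your caveat about $H<\tfrac12$ is the genuine point: the kernel $|T-s|^{2H-2}$ is not locally integrable there, and the integration-by-parts route you sketch (rewriting $Y^H_t$ as $B^H_t-\lambda\int_{-\infty}^t e^{-\lambda(t-u)}B^H_u\,du$ and expanding the resulting $|\cdot|^{2H}$ integrals) is precisely how \cite{Cheridito} handles it, so carrying that out would complete the argument.
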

This basically tells us that for all $H\in\bigl(0,\frac{1}{2}\bigr)\cup\bigl(\frac{1}{2},1\bigr)$ the decay rate of $C_2(T)$ is very similar to the decay rate of $C_1(T)$ (the covariance of the fBm increments). Lemma \ref{lema1} implies that, if $H>\frac{1}{2}$,
$$\lim_{T\rightarrow\infty}T^{2-2H}C_2(T)=\frac{H(2H-1)\widetilde{\sigma}^2}{\lambda^2}=M\in(0,\infty).$$
As in Example 1, due to this rate of decrease, Theorem \ref{main2} is applicable to this process for all $H\in\bigl(0,\frac{1}{2}\bigr)\cup\bigl(\frac{1}{2},1\bigr)$, and
\begin{center}
$\widetilde{V}(T)^{-\frac{1}{2}}\int_0^T\bigl(f(X_t)-\mean{f(Z)}\bigr)\,dt\stackrel{\text{law}}{\longrightarrow}N\sim\mathcal{N}(0,\Sigma^2)$ as $T\rightarrow\infty$.
\end{center}

\end{enumerate}
According to Remark \ref{quantrate2} we can tell that $F_T:=\widetilde{V}(T)^{-\frac{1}{2}}\int_0^T\bigl(f(X_t)-\mean{f(Z)}\bigr)\,dt$ in the above examples  has a rate of convergence to normality of at least $T^{(\frac{1\vee (2H)}{4}-\frac{1}{2})}$ for all $H\in\bigl(0,\frac{1}{2}\bigr)\cup\bigl(\frac{1}{2},1\bigr)$, that is, for $\widetilde{Z}\sim\mathcal{N}(0,1)$,
$$d_W\biggl(\frac{F_T}{\sqrt{\Var{F_T}}},\widetilde{Z}\biggr)=O\bigl(T^{(\frac{1\vee (2H)}{4}-\frac{1}{2})}\bigr)\ \text{ as }\ T\to\infty.$$

\subsection{The Poisson Space Case:\\Simulation of small jumps}

Let $\{F^{(n)}_t\}$ be a sequence of stochastic processes, and $\{Z_t\}$ a stochastic process. We say that $F^{(n)}_t\stackrel{\text{law}}{\approx}Z_t$ if $\left\{F_{t_1}^{(n)},\dots,F^{(n)}_{t_d}\right\}\xrightarrow[n\to\infty]{\text{law}}\bigl\{Z_{t_1},\dots,Z_{t_d}\bigr\}$ for any set of times $\{t_1,t_2,\dots,t_d\}_{\{d\in\NN\}}$. In simulating the path of a process $\{Z_t\}$, we often need to obtain the value of $Z_t(\omega)$ for some fixed times $\{t_1,t_2,\dots,t_d\}$, i.e., we need to know the finite-dimensional distribution of $\bigl\{Z_{t_1},\dots,Z_{t_d}\bigr\}$. If $F^{(n)}_t\stackrel{\text{law}}{\approx}Z_t$, then for $n$ sufficiently large, one could use $\left\{F^{(n)}_{t_1}(\omega),\dots,F^{(n)}_{t_d}(\omega)\right\}$ in place of $\bigl\{Z_{t_1}(\omega),\dots,Z_{t_d}(\omega)\bigr\}$ for simulation.

\bigskip

In \cite{Asmussen}, the authors proved that the small jumps from a L\'evy process can be approximated by Brownian motion. Before this theorem is stated, some notation needs to be introduced: Let $Z_t$ be a L\'evy process with triplet $(b,\sigma^2,\nu)$. To isolate the small jumps, consider the variance $\sigma(\epsilon)^2=\int_{\{\abs{x}\leq \epsilon\}}x^2\,d\nu(x)$ and the small jumps process $F_t^{\epsilon}=\sigma(\epsilon)^{-1}\int\hspace{-.2cm}\int_{[0,t]\times\{\abs{x}\leq \epsilon\}}x\,d\widetilde{N}(s,x)$. Therefore, $L_t=bt+\sigma W_t+N_t^{\epsilon}+\sigma(\epsilon)F_t^{\epsilon}$ where $N_t^{\epsilon}=\sum_{s<t}\Delta X_s1_{\{\abs{\Delta X_s}>\epsilon\}}=\int\hspace{-.2cm}\int_{[0,t]\times\{\abs{x}> \epsilon\}}x\,d\widetilde{N}(s,x)$ is the part of (finitely many) jumps larger than $\epsilon$. Their theorem reads as follows:
\begin{lemma}\label{levycase}
Let $\widehat{W}_t$ be a Brownian motion independent of $W_t$. Then $F_t^{\epsilon}\stackrel{law}{\approx}\widehat{W}_t$ as $\epsilon\rightarrow0$ if $\lim_{\epsilon\rightarrow0}\frac{\sigma(\epsilon)}{\epsilon}=\infty$.
\end{lemma}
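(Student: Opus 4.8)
The plan is to recognise the small--jumps process as a family of first Poisson chaos functionals and to apply the second order Poincar\'e inequality (Corollary \ref{mainqchaos}) to the corresponding random vector; of the five conditions in Remark \ref{conditions} four will be either trivial or equalities, and the fifth, \eqref{2cond2}, is precisely where the hypothesis $\sigma(\epsilon)/\epsilon\to\infty$ enters.

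First I would write, for fixed $\epsilon>0$, $F^{\epsilon}_t=I_1\bigl(f^{\epsilon}_t\bigr)$ with the \emph{deterministic} kernel $f^{\epsilon}_t(s,x)=\sigma(\epsilon)^{-1}\mathbf 1_{[0,t]}(s)\,\mathbf 1_{\{\abs{x}\le\epsilon\}}(x)$, so that $\mean{F^{\epsilon}_t}=0$ and each $F^{\epsilon}_t$ lies in the first Wiener--Poisson chaos. Since only the compensated jump part is involved we are in the pure--jump (Poisson) situation, so $\hilbert=\Lspa^2_\mu$ with $d\mu(s,x)=x^2\,d\nu(x)\,ds$. Fix times $t_1,\dots,t_d$; we may assume them distinct, in which case the matrix $\Sigma$ with entries $\Sigma^{i,j}=t_i\wedge t_j=\mean{\widehat W_{t_i}\widehat W_{t_j}}$ is positive definite (it is the Gram matrix of the linearly independent functions $\mathbf 1_{[0,t_i]}$). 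Put $F^{\epsilon}=(F^{\epsilon}_{t_1},\dots,F^{\epsilon}_{t_d})$ and $Z\sim\mathcal N(0,\Sigma)$.

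Because every component lies in the first chaos, $\MDer_z F^{\epsilon}_{t_i}=f^{\epsilon}_{t_i}(z)$ is deterministic and $\MDer^2 F^{\epsilon}_{t_i}=0$; hence the quantities appearing in conditions \eqref{2cond3} and \eqref{2cond4} of Remark \ref{conditions} vanish identically. By the isometry for $I_1$,
$$\Hnorm{\MDer F^{\epsilon}_{t_i}}^2=\int f^{\epsilon}_{t_i}(s,x)^2\,d\mu(s,x)=\sigma(\epsilon)^{-2}\,t_i\int_{\{\abs{x}\le\epsilon\}}x^2\,d\nu(x)=t_i,$$
so $\mean{\Hnorm{\MDer F^{\epsilon}_{t_i}}^4}=t_i^{2}=O(1)$, which is \eqref{2cond1}; likewise $\Cov{F^{\epsilon}_{t_i}}{F^{\epsilon}_{t_j}}=\sigma(\epsilon)^{-2}(t_i\wedge t_j)\int_{\{\abs{x}\le\epsilon\}}x^2\,d\nu(x)=t_i\wedge t_j$, so $\Var{F^{\epsilon}}=\Sigma$ \emph{exactly}, for every $\epsilon$, which is \eqref{2cond5}. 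Moreover $F^{\epsilon}_{t_i}$ and $\MDer F^{\epsilon}_{t_i}$ belong to \domDWJ, their kernels being bounded with compact support.

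It remains to check \eqref{2cond2}. Since $F^{\epsilon}_{t_i}$ is in the first chaos, $-\MDer L^{-1}F^{\epsilon}_{t_i}=\MDer F^{\epsilon}_{t_i}$, and
$$\mean{\Hprod{\abs{x}}{\abs{\MDer F^{\epsilon}_{t_i}}^{3}}}=\sigma(\epsilon)^{-3}\,t_i\int_{\{\abs{x}\le\epsilon\}}\abs{x}^{3}\,d\nu(x)\le\sigma(\epsilon)^{-3}\,t_i\,\epsilon\int_{\{\abs{x}\le\epsilon\}}x^{2}\,d\nu(x)=t_i\,\frac{\epsilon}{\sigma(\epsilon)}\xrightarrow[\epsilon\to0]{}0,$$
the convergence holding precisely because $\sigma(\epsilon)/\epsilon\to\infty$. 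With all five conditions in force, Corollary \ref{mainqchaos} (used with $\Hclass=\PZclass$, $\hilbert=\Lspa^2_\mu$ and each component in a fixed chaos) gives $d_{\PZclass}(Z,F^{\epsilon})\to 0$ as $\epsilon\to0$. Since $\mean{F^{\epsilon}}=0$ and $\Var{F^{\epsilon}}=\Sigma$ for every $\epsilon$, the family $\{F^{\epsilon}\}$ has uniformly bounded second moments and is therefore tight; together with $\PZclass$ being a separating class, this yields $(F^{\epsilon}_{t_1},\dots,F^{\epsilon}_{t_d})\xrightarrow[\epsilon\to0]{\text{law}}(\widehat W_{t_1},\dots,\widehat W_{t_d})$. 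As the times $t_1,\dots,t_d$ were arbitrary, $F^{\epsilon}_t\stackrel{\text{law}}{\approx}\widehat W_t$, as claimed. The main (and essentially the only) obstacle is the elementary bound $\int_{\{\abs{x}\le\epsilon\}}\abs{x}^{3}\,d\nu(x)\le\epsilon\,\sigma(\epsilon)^{2}$, which is exactly what converts the third--moment requirement \eqref{2cond2} into the hypothesis of the lemma; everything else is bookkeeping with first--chaos elements.
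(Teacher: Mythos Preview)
The paper does not actually prove Lemma~\ref{levycase}: it is quoted as the result of Asmussen--Rosi\'nski \cite{Asmussen}. Your argument is nonetheless correct, and it is precisely the specialization to $h_t=\mathbf 1_{[0,t]}$ of the paper's own proof of the generalization, Theorem~\ref{smalljumpsconv}, which proceeds by the same route --- write each coordinate as a first-chaos element, observe that $\MDer^2F^{\epsilon}_{t_i}=0$ kills conditions \eqref{2cond3}--\eqref{2cond4}, verify \eqref{2cond1} and \eqref{2cond5} by the isometry, and reduce \eqref{2cond2} to the elementary bound $\int_{\{|x|\le\epsilon\}}|x|^3\,d\nu(x)\le\epsilon\,\sigma(\epsilon)^2$. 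If anything, you are a bit more careful than the paper: you justify the positive definiteness of $\Sigma$ needed in Corollary~\ref{mainqchaos}, and you spell out the passage from $d_{\PZclass}\to0$ to convergence in law via tightness, which the paper leaves implicit.
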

The importance of this lemma is that $L_t\stackrel{\text{law}}{\approx}b t+\sqrt{\sigma^2+\sigma(\epsilon)^2}W_t+N_t^{\epsilon}$ (for $\epsilon$ small enough), and the latter is rather easy to simulate.

\bigskip

The objective of this subsection is to extend this kind of result to functionals that are not necessarily L\'evy. To focus just on the jump part, let's assume, without loss of generality, that the triplet of the L\'evy process $L_t$ is $(0,0,\nu)$, i.e., $\nu\not\equiv0$ and $\sigma=0$. Since we are assuming that $\int_{\RR}x^2\,d\nu(x)<\infty$, then, it can be written as $L_t=I_1(1_{[0,t]})=\int\hspace{-.2cm}\int_{[0,t]\times\RR}x\,d\widetilde{N}(s,x)$. A natural generalization of a Levy process (but losing some of its properties) can be given by
$$X_t=I_1\bigl(h_t1_{[0,t]}\bigr)=\int\hspace{-.2cm}\int_{[0,t]\times\RR}h_t(s,x)x\,d\widetilde{N}(s,x).$$
Define the process $F_t^{\epsilon}=I_1\bigl(\sigma_t(\epsilon)^{-1}h_t1_{[0,t]\times\{\abs{x}\leq\epsilon\}}\bigr)=\sigma(\epsilon)^{-1}\int\hspace{-.2cm}\int_{[0,t]\times\{\abs{x}\leq \epsilon\}}h_t(s,x)x\,d\widetilde{N}(s,x)$. This means that $X_t=N_t^{\epsilon}+\sigma(\epsilon)F_t^{\epsilon}$ where $N_t^\epsilon=I_1\bigl(h_t1_{[0,t]\times\{\abs{x}>\epsilon\}}\bigr)=\int\hspace{-.2cm}\int_{[0,t]\times\{\abs{x}> \epsilon\}}h_t(s,x)x\,d\widetilde{N}(s,x)$ has finitely many jumps.

\begin{theorem}\label{smalljumpsconv}
Let $\widehat{W}$ be an isonormal Gaussian process with $\mean{\widehat{W}(f)\widehat{W}(g)}=\int_{\RR^+}f(s)g(s)\,ds$ as its covariance structure. Moreover, suppose that $h_t(s,x)=h_t(s)\in L^3(0,T)$. Then $F_t^{\epsilon}\stackrel{law}{\approx}\widehat{W}(h_t)$ as $\epsilon\rightarrow0$ if $\lim_{\epsilon\rightarrow0}\frac{\sigma(\epsilon)}{\epsilon}=\infty$.
\end{theorem}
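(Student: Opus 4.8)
The plan is to reduce the asserted finite-dimensional convergence to a one-dimensional statement via the Cram\'er--Wold device, and then to invoke the Nourdin--Peccati bound (Theorem \ref{main}) in the pure-jump Poisson setting, where the first-chaos structure of $F^{\epsilon}_t$ makes every ingredient fully explicit. \textbf{Step 1 (reduction to dimension one).} Fix times $t_1,\dots,t_d$ and a vector $\lambda\in\RR^d$. By the linearity of $I_1$,
$$\sum_{i=1}^d\lambda_i F^{\epsilon}_{t_i}=I_1\!\left(\sigma(\epsilon)^{-1}\,h\,1_{\{\abs{x}\le\epsilon\}}\right)=:G^{\epsilon},\qquad h:=\sum_{i=1}^d\lambda_i\,h_{t_i}1_{[0,t_i]}\in L^3(0,T),$$
while $\sum_i\lambda_i\widehat W(h_{t_i})=\widehat W(h)\sim\mathcal N\!\left(0,\|h\|_{L^2(\RR^+)}^2\right)$. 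Since $\sigma=0$ we have $d\mu(s,x)=x^2\,d\nu(x)\,ds$, so using $\sigma(\epsilon)^2=\int_{\{\abs{x}\le\epsilon\}}x^2\,d\nu(x)$ and the isometry $\mean{I_1(f)I_1(g)}=\int_{\RR^+\times\RR}fg\,d\mu$ one gets $\mean{F^{\epsilon}_{t_i}F^{\epsilon}_{t_j}}=\langle h_{t_i}1_{[0,t_i]},h_{t_j}1_{[0,t_j]}\rangle_{L^2(\RR^+)}=\mean{\widehat W(h_{t_i})\widehat W(h_{t_j})}$; in particular the two covariance structures already coincide for \emph{every} $\epsilon$. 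If $h=0$ in $L^2$ then $G^{\epsilon}=0$ and there is nothing to prove, so assume $\|h\|_{L^2}>0$. By the Cram\'er--Wold theorem it now suffices to show $G^{\epsilon}\Rightarrow Z$ as $\epsilon\to0$, with $Z\sim\mathcal N\!\left(0,\|h\|_{L^2}^2\right)$.

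\textbf{Step 2 (the NP bound for $G^{\epsilon}$).} Put $g^{\epsilon}(s,x):=\sigma(\epsilon)^{-1}h(s)1_{\{\abs{x}\le\epsilon\}}$, so $G^{\epsilon}=I_1(g^{\epsilon})$ with $g^{\epsilon}\in\Lspa^2_\mu$ (indeed $\|g^{\epsilon}\|_{\Lspa^2_\mu}^2=\|h\|_{L^2}^2$), hence $G^{\epsilon}\in\Dom\MDer^{W,J}$ (a single first-chaos integral of a square-integrable kernel) and $\mean{G^{\epsilon}}=0$. By \eqref{malliavinderivative2} the derivative $\MDer_zG^{\epsilon}=g^{\epsilon}(z)$ is deterministic, and since $G^{\epsilon}$ lies in the first chaos, $-\MDer_zL^{-1}G^{\epsilon}=\MDer_zG^{\epsilon}=g^{\epsilon}(z)$. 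Consequently the conditional matrix in Theorem \ref{main} is the constant $g_F(G^{\epsilon})=\|g^{\epsilon}\|_{\Lspa^2_\mu}^2=\|h\|_{L^2}^2=\Sigma$, so $\mean{\HSnorm{\Sigma-g_F(G^{\epsilon})}}=0$, and the remaining term is
$$\mean{\Hprod{\abs{x}\,\abs{\MDer G^{\epsilon}}^2}{\abs{\MDer L^{-1}G^{\epsilon}}}}=\dint_{\RR^+\times\RR}\abs{x}\,\abs{g^{\epsilon}(z)}^3\,d\mu(z)=\sigma(\epsilon)^{-3}\|h\|_{L^3}^3\int_{\{\abs{x}\le\epsilon\}}\abs{x}^3\,d\nu(x).$$
Bounding $\int_{\{\abs{x}\le\epsilon\}}\abs{x}^3\,d\nu(x)\le\epsilon\int_{\{\abs{x}\le\epsilon\}}x^2\,d\nu(x)=\epsilon\,\sigma(\epsilon)^2$, the right-hand side is $\le\|h\|_{L^3}^3\,\epsilon/\sigma(\epsilon)\to0$, because $\sigma(\epsilon)/\epsilon\to\infty$. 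Theorem \ref{main} (with $\Hclass=\PZclass$, $\hilbert=\Lspa^2_\mu$, $d=1$) then yields $d_{\PZclass}(Z,G^{\epsilon})\le\boldsymbol{k}\,\|h\|_{L^3}^3\,\epsilon/\sigma(\epsilon)\to0$.

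\textbf{Step 3 (conclusion).} Since $\sup_{\epsilon}\mean{(G^{\epsilon})^2}=\|h\|_{L^2}^2<\infty$, the family $\{G^{\epsilon}\}$ is tight, and as $\PZclass$ is a separating class, $d_{\PZclass}(Z,G^{\epsilon})\to0$ forces $G^{\epsilon}\Rightarrow Z$. By Cram\'er--Wold, $(F^{\epsilon}_{t_1},\dots,F^{\epsilon}_{t_d})\Rightarrow(\widehat W(h_{t_1}),\dots,\widehat W(h_{t_d}))$ for every finite collection of times, i.e. $F^{\epsilon}_t\stackrel{\text{law}}{\approx}\widehat W(h_t)$.

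\textbf{Main obstacle.} The only genuinely substantive estimate is $\int_{\{\abs{x}\le\epsilon\}}\abs{x}^3\,d\nu(x)\le\epsilon\,\sigma(\epsilon)^2$ combined with the $\sigma(\epsilon)^{-1}$ normalisation: this is exactly where both the hypothesis $\sigma(\epsilon)/\epsilon\to\infty$ and the $L^3$-integrability of $h_t$ enter, and it plays the role that the variance-explosion condition plays in the L\'evy case of \cite{Asmussen}. Everything else is automatic: the covariance term of the NP bound vanishes identically, all second-order Malliavin terms vanish because $G^{\epsilon}$ lives in a single Poisson chaos, and the passage from $d_{\PZclass}$-convergence to weak convergence is handled by the uniform second-moment bound. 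One could equally run the argument through the second-order Poincar\'e inequality of Corollary \ref{mainqchaos}, verifying conditions \eqref{2cond1}--\eqref{2cond5} of Remark \ref{conditions} directly; then \eqref{2cond1}, \eqref{2cond3}, \eqref{2cond4} and \eqref{2cond5} are immediate for first-chaos integrals and \eqref{2cond2} is the same third-moment computation.
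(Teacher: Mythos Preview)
Your proof is correct. The key computation---bounding $\sigma(\epsilon)^{-3}\int_{\{|x|\le\epsilon\}}|x|^3\,d\nu(x)\le\epsilon/\sigma(\epsilon)$ and invoking $h\in L^3$---is exactly the same as the paper's, and the first-chaos structure makes everything else trivial in both arguments.

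The route differs slightly: the paper applies the multidimensional second-order Poincar\'e inequality (Corollary \ref{mainqchaos}) directly to the vector $(F^{\epsilon}_{t_1},\dots,F^{\epsilon}_{t_d})$ and verifies the five conditions of Remark \ref{conditions} componentwise, whereas you first collapse to a scalar via Cram\'er--Wold and then apply the one-dimensional NP bound (Theorem \ref{main}) itself, bypassing Corollary \ref{mainqchaos} entirely. Your approach is marginally more economical---you avoid the matrix bookkeeping and the detour through $\opnorm{\MDer^2 F}$, and you never need the conditions \eqref{2cond3}--\eqref{2cond4} even to state, since the covariance term in Theorem \ref{main} vanishes \emph{identically} rather than merely being bounded. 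The paper's approach, on the other hand, stays within the framework set up in Remark \ref{conditions} and avoids the small side-argument (tightness plus $\PZclass$ separating) needed to pass from $d_{\PZclass}\to0$ to weak convergence. You correctly identify this equivalence in your closing remark; either path is perfectly adequate here because $\MDer^2 F^{\epsilon}_{t_i}=0$ kills all the delicate terms.
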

\begin{proof}
We need to show that $F_\epsilon:=\left\{F_{t_1}^{\epsilon},\dots,F^{\epsilon}_{t_d}\right\}\xrightarrow[n\to\infty]{\text{law}}\bigl\{\widehat{W}(h_{t_1}),\dots,\widehat{W}(h_{t_d})\bigr\}=:Z$, for any set of times $\{t_1,t_2,\dots,t_d\}$ with $d\in\NN$. Note first that $F_{t_i}$ lies in the first Poisson chaos, hence conditions to use the multidimensional version of Corollary \ref{mainqchaos} are in place, and all we need to do is to make sure that the five conditions of Remark \ref{conditions} are satisfied. Moreover, $\MDer_{s,x} F_{t_i}^{\epsilon}=\sigma(\epsilon)^{-1}h_t(s)1_{[0,t]\times\{\abs{x}\leq\epsilon\}}(s,x)$, and $\MDer^2_{s,x} F_{t_i}^{\epsilon}=0$, for all $i$. This shows that \eqref{2cond3} and \eqref{2cond4} are trivially fulfilled.

\bigskip

Notice that
$$\Hnorm{\boldsymbol{D}F_{t_i}^{\epsilon}}^2=\Lqnorm{\sigma(\epsilon)^{-1}h_{t_i}1_{[0,t_i]\times\{\abs{x}\leq \epsilon\}}}{\mu}^2=\frac{\int_0^{t_i}\bigl(h_{t_i}(s)\bigr)^2\,ds\cdot\int_{\{\abs{x}\leq \epsilon\}}x^2\,d\nu(x)}{\sigma(\epsilon)^2}=\int_0^{t_i}\bigl(h_{t_i}(s)\bigr)^2\,ds.$$
Hence, $\mean{\Hnorm{\boldsymbol{D}F_{t_i}^{\epsilon}}^4}^{\frac{1}{4}}=\Lqnorm{h_{t_i}}{}=O(1)$ as $\epsilon\to0$, and condition \eqref{2cond1} holds.

\bigskip

On the other hand,
\begin{align*}
\mean{\Hprod{\abs{x}}{\abs{\boldsymbol{D}F_{t_i}^{\epsilon}}^3}}&=\frac{\dint_{[0,t_i]\times\{\abs{x}\leq\epsilon\}}\abs{xh_{t_i}(s)}^3\,d\nu(x)\,ds}{\sigma(\epsilon)^3}=\frac{\int_0^{t_i}\bigl|h_{t_i}(s)\bigr|^3\,ds\cdot\int_{\{\abs{x}\leq \epsilon\}}|x|^3\,d\nu(x)}{\sigma(\epsilon)^3}\\
&\leq\frac{\int_0^{t_i}\bigl|h_{t_i}(s)\bigr|^3\,ds\cdot\int_{\{\abs{x}\leq \epsilon\}}x^2\,d\nu(x)}{\sigma(\epsilon)^2}\cdot\frac{\epsilon}{\sigma(\epsilon)}=\int_0^{t_i}\bigl|h_{t_i}(s)\bigr|^3\,ds\cdot\frac{\epsilon}{\sigma(\epsilon)}\xrightarrow[\epsilon\to0]{}0,
\end{align*}
by hypothesis. Therefore, condition \eqref{2cond2} is verified.

\bigskip

Finally, using properties of the It$\hat{\text{o}}$ integral,
\begin{align*}
\Cov{F_{t_i}^{\epsilon}}{F_{t_j}^{\epsilon}}&=\mean{F_{t_i}^{\epsilon}F_{t_j}^{\epsilon}}=\frac{\dint_{[0,t_i]\times\{\abs{x}\leq\epsilon\}}h_{t_i}(s)h_{t_j}(s)x^2\,d\nu(x)\,ds}{\sigma(\epsilon)^2}=\int_0^{t_i\wedge t_j}h_{t_i}(s)h_{t_j}(s)\,ds\cdot\frac{\int_{\{\abs{x}\leq \epsilon\}}x^2\,d\nu(x)}{\sigma(\epsilon)^2}\\
&=\mean{\widehat{W}(h_{t_i})\widehat{W}(h_{t_j})}=\Cov{\widehat{W}(h_{t_i})}{\widehat{W}(h_{t_j})}.
\end{align*}
Hence, for all $\epsilon$, $\Var{F_\epsilon}=\Sigma:=\Var{Z}$, corroborating condition \eqref{2cond5} and concluding the proof.
\end{proof}

\bigskip

We have just shown that $X_t\stackrel{\text{law}}{\approx}\sigma(\epsilon)\widehat{W}(h_t)+N_t^{\epsilon}$, hence a process $X_t=I_1\bigl(h_t1_{[0,t]}\bigr)$ with infinitely many jumps can be substituted by a process $\sigma(\epsilon)\widehat{W}(h_t)+N_t^{\epsilon}$ with finitely many jumps, for $\epsilon$ small enough, making it easier to simulate.

\begin{preremark}
If $h_t(s,x)$ is also space dependent, then the previous result does not follow. Nevertheless, we can show that for a fixed $t$ the random variable $F_t^{\epsilon}\stackrel{law}{\longrightarrow}Z\sim\mathcal{N}\left(0,1\right)$ as $\epsilon\rightarrow0$ if
\begin{align}\label{smalljumpsconv0}
\frac{\dint_{[0,t]\times\{\abs{x}\leq\epsilon\}}\abs{xh_t(s,x)}^3\,d\nu(x)\,ds}{\widetilde{\sigma}_t(\epsilon)^3}\xrightarrow[\epsilon\rightarrow0]{}0,
\end{align}
where $\widetilde{\sigma}_t(\epsilon)^2=\Lqnorm{h_t1_{[0,t]\times\{\abs{x}\leq\epsilon\}}}{\mu}^2=\dint_{[0,t_i]\times\{\abs{x}\leq\epsilon\}}\bigl(h_t(s,x)\bigr)^2x^2\,d\nu(x)\,ds$. Conditions \eqref{2cond3} and \eqref{2cond4} are again trivially fulfilled. Also $\Var{F_t^{\epsilon}}=\Hnorm{\boldsymbol{D}F_t^{\epsilon}}^2=1$ which satisfies \eqref{2cond1} and \eqref{2cond5}. Hypothesis \eqref{smalljumpsconv0} implies condition \eqref{2cond2} is valid.
\end{preremark}

\subsubsection{Example: Fractional L\'evy Process}

The condition $\lim_{\epsilon\rightarrow0}\frac{\sigma(\epsilon)}{\epsilon}=\infty$ may quite easily be verified. In fact, the measure $d\nu(x)=\abs{x}^{-(2+\delta)}1_{\{-a\leq x\leq b\}}dx$ for $\delta\in(-1,1)$ and $a,b>0$ (no jumps bigger than $b$ or smaller than $-a$) is such that $\lim_{\epsilon\rightarrow0}\frac{\sigma(\epsilon)}{\epsilon}=\infty$. To check this, note that $\sigma(\epsilon)^2=\int_{\{\abs{x}\leq\epsilon\}}\abs{x}^2\,d\nu(x)=\frac{2\epsilon^{1-\delta}}{(1-\delta)}$, so $\lim_{\epsilon\rightarrow0}\frac{\sigma(\epsilon)^2}{\epsilon^2}=\lim_{\epsilon\rightarrow0}\frac{2}{(1-\delta)\epsilon^{1+\delta}}=\infty$. Hence, for the example, assume that the measure $\nu$ is such that $\lim_{\epsilon\rightarrow0}\frac{\sigma(\epsilon)}{\epsilon}=\infty$.

\begin{enumerate}

\item {\bf Fractional L\'evy Process (fLp):}\\
There are two ways to represent a fractional Brownian motion as an integral of a kernel with respect to Brownian motion (see \cite{Celine} for a thorough explanation), and both deliver the same process in the sense that both are Gaussian processes with the same covariance structure for $t\geq0$. One is the so-called Mandelbrot-Van Ness representation which is an integral over the whole real line with respect to a two sided Brownian motion. That is, if $B_t$ is a two-sided Brownian motion, then for all $t\in\RR$ and $H\in(0,1)$,
$$B_t^H=\int_{-\infty}^tC_H\bigl((t-s)_+^{H-\frac{1}{2}}-(-s)_+^{H-\frac{1}{2}}\bigr)\,dB_s,$$
where $C_H=\frac{\left(2H\sin(\pi H\Gamma(2H))\right)}{\Gamma(H+\frac{1}{2})}$ and $\Gamma$ is the Gamma function.\\
Alternatively, it can be represented in a compact interval by using the so-called Molchan-Golosov representation, that is, for all $t\geq0$,
$$B_t^H=\int_0^tK_H(t,s)\,dB_s,$$
where
\begin{align*}
K_H(t,s)=\begin{cases}
c^{(1)}_H\left[\lp\frac{t}{s}\rp^{H-\frac{1}{2}}(t-s)^{H-\frac{1}{2}}-\lp H-\frac{1}{2}\rp s^{\frac{1}{2}-H}\int_s^tu^{H-\frac{3}{2}}(u-s)^{H-\frac{1}{2}}\,du\right], &  H<\frac{1}{2}\\
c^{(2)}_Hs^{\frac{1}{2}-H}\int_s^t(u-s)^{H-\frac{3}{2}}u^{H-\frac{1}{2}}\,du, &  H>\frac{1}{2}
\end{cases},
\end{align*}
with $c^{(1)}_H=\sqrt{\frac{2H}{(1-2H)\beta(1-2H,H+1/2)}}$ and $c^{(2)}_H=\sqrt{\frac{H(2H-1)}{\beta(2-2H,H-1/2)}}$. Here $\beta$ denotes the beta function.

\bigskip

In the L\'evy case, following this same construction but substituting the Brownian motion with a \Levy\ process, in \cite{Heikki} the authors prove that these representations imply different processes with very different characteristics. Because of this ``non-uniqueness'', the fLp generated by the Mandelbrot-Van Ness representation is called fLpMvN, and the one generated by the Molchan-Golosov representation is called fLpMG.
\begin{defn}
Let $\Levyp_t$ be a two-sided \Levy\ process such that $\mean{\Levyp_1}=0$ and $\mean{\Levyp_1^2}<\infty$. For $H\in(0,1)$ and all $t\in\RR$, the process
$$\Levyp_t^H=\int_{-\infty}^tC_H\bigl((t-s)_+^{H-\frac{1}{2}}-(-s)_+^{H-\frac{1}{2}}\bigr)\,d\Levyp_s,$$
is the fractional \Levy\ process with the Mandelbrot-Van Ness transformation (fLpMvN). Furthermore, for $H\in(0,1)$ and all $t\geq0$, the process
$$\Levyp_t^H=\int_0^tK_H(t,s)\,d\Levyp_s,$$
is the fractional \Levy\ process with the Molchan-Golosov transformation (fLpMG).
\end{defn}

It is known that fLp's have the same covariance structure as fBm. The advantage of fLpMvN over fLpMG is that the former is stationary and the latter is not in general, as is shown in \cite{Heikki}. Nevertheless, since fLpMG is derived on a compact interval, Malliavin calculus can be applied to it.

Consider $\Levyp_t^H$ as an fLpMG, that is, $\Levyp_t^H=I_1(K^H_t)$ where
$$\Lprod{K^H_t}{K^H_s}{}=\frac{1}{2}\bigl(\abs{t}^{2H}+\abs{s}^{2H}-\abs{t-s}^{2H}\bigr).$$
According to Theorem \ref{smalljumpsconv}, since $\lim_{\epsilon\rightarrow0}\frac{\sigma(\epsilon)}{\epsilon}=\infty$ and $h_t(s,x)=K^H_t(s)$, it follows that $F_t^{\epsilon}\stackrel{law}{\approx}\widehat{W}(K^H_t)$ as $\epsilon \to 0$. But $\widehat{W}(K^H_t)=B^H_t$ is a fractional Brownian motion. We conclude that in order to simulate an fLpMG $X_t$, we just need to fix $\epsilon$ small enough, and simulate the finitely many jumps part $N_t^{\epsilon}=I_1(K^H_t1_{\{\abs{x}\geq\epsilon\}})$ along with an (independent) fBm part $B^H_t=\widehat{W}(K^H_t)$, because $\Levyp_t^H\stackrel{\text{law}}{\approx}\sigma(\epsilon)B^H_t+N_t^{\epsilon}$.

\end{enumerate}

\subsection{The Wiener-Poisson space case:\\ Product of O-U processes}

Finally, we use the second order Poincar\'e inequality developed in the combined space to obtain a central limit theorem for mixed processes.
\begin{defn}
Let $\Levyp_t$ be a \Levy\ process such that $\mean{\Levyp_1}=0$ and $\mean{\Levyp_1^2}<\infty$. For all $t\geq0$, the process
$$X_t=\sqrt{2\lambda}\int_{-\infty}^te^{-\lambda(t-u)}\,d\Levyp_u,$$
is the \Levy\ Ornstein-Uhlenbeck process.
\end{defn} 
In particular, for the characteristic triplet $(0,\sigma,\nu)$:
\begin{itemize}

\item If $\nu\equiv0$, $X_t$ is the classic Wiener Ornstein-Uhlenbeck process.

\item If $\sigma=0$, $X_t$ is the Poisson Ornstein-Uhlenbeck process.

\end{itemize}
First, notice that if we have a double Ornstein-Uhlenbeck (O-U) process as a sum of a Wiener O-U process $Y_t$ plus a Poisson O-U process $Z_t$ (independent of $Y_t$), one can prove in two different ways that the functional $F_T=T^{-\frac{1}{2}}\int_0^T\bigl(Y_t+Z_t\bigr)\,dt$ converges to a normal random variable as $T\to\infty$. The first way is to separate $F_T$ into the terms $T^{-\frac{1}{2}}\int_0^TY_t\,dt$ and $T^{-\frac{1}{2}}\int_0^TZ_t\,dt$ and use the NP Bounds in the Wiener space and in the Poisson space respectively to prove that each part goes to a normal. The other method is to use inequality (\ref{mainineq1}) (NP Bound in the Wiener-Poisson space) and just do one computation. The second way is clearly faster (since the kernels are the same). This is one advantage of having the inequality in the combined space.

\bigskip

For our example, let us focus on a process that cannot be tackled by either of the NP Bounds (in the Wiener or Poisson spaces) to prove a CLT. First, for simplicity's sake (to avoid dealing with constants), assume that the triplet for the underlying L\'evy process is given by $(0,1,\nu)$, where $\int_\RR x^2\,d\nu(x)=1$. Moreover, assume that $\int_{\RR_0}x^4\,d\nu(x)<\infty$. Let $Y_t=\int_0^t\sqrt{2\lambda}e^{-\lambda(t-s)}\,dW_s$ and  $Z_t=\dint_{[0,t]\times\RR_0}\sqrt{2\lambda}e^{-\lambda(t-s)}x\,d\widetilde{N}(s,x)$, so $Y_t$ is a Wiener O-U process and $Z_t$ is a Poisson (pure jump) O-U process. If $h_t(s)=\sqrt{2\lambda}e^{-\lambda(t-s)}1_{\{s\leq t\}}$ then the double O-U process mentioned above is just $Y_t+Z_t=I_1(h_t)$. Now, define $h_t^{(0)}(s,x)=h_t(s)1_{\{x=0\}}(x)$ and $h_t^{(1)}(s,x)=h_t(s)1_{\{x\neq0\}}(x)$, then $Y_t=I_1\bigl(h_t^{(0)}\bigr)$ and $Z_t=I_1\bigl(h_t^{(1)}\bigr)$. Notice that due to the normalization of the L\'evy triplet we have that $C(t,s)=\Lqprod{h_t^{(0)}}{h_s^{(0)}}{\mu}=\Lqprod{h_t^{(1)}}{h_s^{(1)}}{\mu}$. The goal of this subsection is to show that $F_T=T^{-\frac{1}{2}}\int_0^T\bigl(Y_tZ_t\bigr)\,dt\stackrel{law}{\longrightarrow}Z\sim\mathcal{N}(0,\Sigma^2)$ as $T\to\infty$.

\bigskip

Since $h_t^{(0)}$ and $h_t^{(1)}$ have disjoint supports $\bigl($and using the product formula (\ref{prodint})$\bigr)$, $Y_tZ_t=I_2\bigl(h_t^{(0)}\widetilde{\otimes}h_t^{(1)}\bigr)$, and by Fubini $F_T=I_2\bigl(T^{-\frac{1}{2}}\int_{\cdot\vee\cdot}^Th_t^{(0)}\widetilde{\otimes}h_t^{(1)}\,dt\bigr)$. Hence $F_T$ lies in the $2^{\text{nd}}$ chaos. According to Corollary \ref{mainqchaos} we just need to check conditions (\ref{2cond1}), (\ref{2cond2}), (\ref{2cond3}), (\ref{2cond4}) and (\ref{2cond5}).
\begin{itemize}

\item {\bf Expectation of the First Derivative Norm:}\\
\begin{itemize}

\item First Malliavin Derivative:
$$\boldsymbol{D}_zF_T=T^{-\frac{1}{2}}\int_0^TI_1\bigl(h_t^{(0)}\bigr)h_t^{(1)}(z)+I_1\bigl(h_t^{(1)}\bigr)h_t^{(0)}(z)\,dt.$$

\item Norm of the First Malliavin Derivative:
\begin{align*}
\Hnorm{\boldsymbol{D}_zF_T}^2&=T^{-1}\int_{[0,T]^2}\Hprod{I_1\bigl(h_t^{(0)}\bigr)h_t^{(1)}(z)+I_1\bigl(h_t^{(1)}\bigr)h_t^{(0)}(z)}{I_1\bigl(h_s^{(0)}\bigr)h_s^{(1)}(z)+I_1\bigl(h_s^{(1)}\bigr)h_s^{(0)}(z)}\,dt\,ds\\
&=T^{-1}\int_{[0,T]^2}I_1\bigl(h_t^{(0)}\bigr)I_1\bigl(h_s^{(0)}\bigr)\Hprod{h_t^{(1)}}{h_s^{(1)}}+I_1\bigl(h_t^{(1)}\bigr)I_1\bigl(h_s^{(1)}\bigr)\Hprod{h_t^{(0)}}{h_s^{(0)}}\,dt\,ds\\
&=T^{-1}\int_{[0,T]^2}\bigl(I_1\bigl(h_t^{(0)}\bigr)I_1\bigl(h_s^{(0)}\bigr)+I_1\bigl(h_t^{(1)}\bigr)I_1\bigl(h_s^{(1)}\bigr)\bigr)C(t,s)\,dt\,ds,
\end{align*}
then,
\begin{align*}
\Hnorm{\boldsymbol{D}_zF_T}^4&\leq 2T^{-2}\biggl[\biggl(\int_{[0,T]^2}I_1\bigl(h_t^{(0)}\bigr)I_1\bigl(h_s^{(0)}\bigr)C(t,s)\,dt\,ds\biggr)^2+\biggl(\int_{[0,T]^2}I_1\bigl(h_t^{(1)}\bigr)I_1\bigl(h_s^{(1)}\bigr)C(t,s)\,dt\,ds\biggr)^2\biggr]\\
&=2T^{-2}\int_{[0,T]^4}\biggl[\prod_{i=1}^4I_1\bigl(h_{t_i}^{(0)}\bigr)+\prod_{i=1}^4I_1\bigl(h_{t_i}^{(1)}\bigr)\biggr]C(t_1,t_2)C(t_3,t_4)\,dt_1\,dt_2\,dt_3\,dt_4.
\end{align*}

\item Expectation of the First Derivative Norm:\\
Notice that by the product formula (\ref{prodint}) we have that
$$\prod_{i=1}^4I_1\bigl(h_{t_i}^{(0)}\bigr)=\biggl[\Hprod{h_{t_1}^{(0)}}{h_{t_2}^{(0)}}+I_2\bigl(h_{t_1}^{(0)}\widetilde{\otimes}h_{t_2}^{(0)}\bigr)\biggr]\biggl[\Hprod{h_{t_3}^{(0)}}{h_{t_4}^{(0)}}+I_2\bigl(h_{t_3}^{(0)}\widetilde{\otimes}h_{t_4}^{(0)}\bigr)\biggr],$$
and
$$\prod_{i=1}^4I_1\bigl(h_{t_i}^{(1)}\bigr)=\biggl[\Hprod{h_{t_1}^{(1)}}{h_{t_2}^{(1)}}+I_1\bigl(h_{t_1}^{(1)}\otimes_0^1h_{t_2}^{(1)}\bigr)+I_2\bigl(h_{t_1}^{(1)}\widetilde{\otimes}h_{t_2}^{(1)}\bigr)\biggr]\biggl[\Hprod{h_{t_3}^{(1)}}{h_{t_4}^{(1)}}+I_1\bigl(h_{t_3}^{(1)}\otimes_0^1h_{t_4}^{(1)}\bigr)+I_2\bigl(h_{t_3}^{(1)}\widetilde{\otimes}h_{t_4}^{(1)}\bigr)\biggr],$$
so
$$\mean{\prod_{i=1}^4I_1\bigl(h_{t_i}^{(0)}\bigr)}=C(t_1,t_2)C(t_3,t_4)+\HHprod{h_{t_1}^{(0)}\widetilde{\otimes}h_{t_2}^{(0)}}{h_{t_3}^{(0)}\widetilde{\otimes}h_{t_4}^{(0)}},$$
and
$$\mean{\prod_{i=1}^4I_1\bigl(h_{t_i}^{(1)}\bigr)}=C(t_1,t_2)C(t_3,t_4)+\Hprod{h_{t_1}^{(1)}\otimes_0^1h_{t_2}^{(1)}}{h_{t_3}^{(1)}\otimes_0^1h_{t_4}^{(1)}}+\HHprod{h_{t_1}^{(1)}\widetilde{\otimes}h_{t_2}^{(1)}}{h_{t_3}^{(1)}\widetilde{\otimes}h_{t_4}^{(1)}}.$$
Also notice that
$$C(t,s)=\int_0^{t\wedge s}2\lambda e^{-\lambda(t+s-2u)}\,du=e^{-\lambda\abs{t-s}}-e^{-\lambda(t+s)}\leq e^{-\lambda\abs{t-s}}\leq1,$$
$$\underbrace{\Hprod{h_{t_1}^{(1)}\otimes_0^1h_{t_2}^{(1)}}{h_{t_3}^{(1)}\otimes_0^1h_{t_4}^{(1)}}}_{\Hprod{xh_{t_1}^{(1)}h_{t_2}^{(1)}}{xh_{t_3}^{(1)}h_{t_4}^{(1)}}}=\int_0^{\min\{t_1,t_2,t_3,t_4\}}\int_{\RR_0}x^44\lambda^2e^{-\lambda(t_1+t_2+t_3+t_4-4u)}\,d\nu(x)\,du\leq\lambda\int_{\RR_0}x^4\,d\nu(x),$$
$$\HHprod{h_{t_1}^{(0)}\widetilde{\otimes}h_{t_2}^{(0)}}{h_{t_3}^{(0)}\widetilde{\otimes}h_{t_4}^{(0)}}=\HHprod{h_{t_1}^{(1)}\widetilde{\otimes}h_{t_2}^{(1)}}{h_{t_3}^{(1)}\widetilde{\otimes}h_{t_4}^{(1)}}=\frac{C(t_1,t_3)C(t_2,t_4)+C(t_1,t_4)C(t_2,t_3)}{2}\leq1.$$
Putting all this together we get
\begin{align*}
\mean{\Lqnorm{\boldsymbol{D}F_T}{\mu}^4}&=2T^{-2}\int_{[0,T]^4}\biggl(\mean{\prod_{i=1}^4I_1\bigl(h_{t_i}^{(0)}\bigr)}+\mean{\prod_{i=1}^4I_1\bigl(h_{t_i}^{(1)}\bigr)}\biggr)C(t_1,t_2)C(t_3,t_4)\,dt_1\,dt_2\,dt_3\,dt_4\\
&\leq2\biggl(4+\lambda\int_{\RR_0}x^4\,d\nu(x)\biggr)\biggl(T^{-1}\int_{[0,T]^2}C(t,s)\,dt\,ds\biggr)^2\\
&\leq2\biggl(4+\lambda\int_{\RR_0}x^4\,d\nu(x)\biggr)\biggl(T^{-1}\int_{[0,T]^2}e^{-\lambda\abs{t-s}}\,dt\,ds\biggr)^2\\
&=2\biggl(4+\lambda\int_{\RR_0}x^4\,d\nu(x)\biggr)\biggl(2T^{-1}\int_0^T\int_0^te^{-\lambda(t-s)}\,ds\,dt\biggr)^2\\
&\leq2\biggl(4+\lambda\int_{\RR_0}x^4\,d\nu(x)\biggr)\biggl(\frac{2}{\lambda}\biggr)^2.
\end{align*}
\end{itemize}
All this proves that
\begin{align*}
\boxed{\mean{\Hnorm{\boldsymbol{D}F_T}^4}^{\frac{1}{4}}=O(1)\ \text{ as }\ T\to\infty.}
\end{align*}

\item {\bf Expectation of the Cube of the First Derivative Norm:}
\begin{itemize}

\item Cube of the First Malliavin Derivative:\\
Since $h_{t}^{(0)}(z)\cdot h_{s}^{(1)}(z)=0$ for all $z\in\RR^+\times\RR$ then,
$$\abs{\boldsymbol{D}F_T}^3=\abs{T^{-\frac{3}{2}}\int_{[0,T]^3}\biggl[\prod_{i=1}^3I_1\bigl(h_{t_i}^{(0)}\bigr)h_{t_i}^{(1)}+\prod_{i=1}^3I_1\bigl(h_{t_i}^{(1)}\bigr)h_{t_i}^{(0)}\biggr]\,dt_1\,dt_2\,dt_3}.$$

\item Norm of the Cube of the First Malliavin Derivative:\\
Since $x\cdot h_{t}^{(0)}(z)=0$ for all $z=(t,x)\in\RR^+\times\RR$ then
$$\Hprod{\abs{x}}{\abs{\boldsymbol{D}F_T}^3}\leq T^{-\frac{3}{2}}\int_{[0,T]^3}\prod_{i=1}^3\abs{I_1\bigl(h_{t_i}^{(0)}\bigr)}\Hprod{\abs{x}}{\prod_{i=1}^3\abs{h_{t_i}^{(1)}}}\,dt_1\,dt_2\,dt_3.$$

\item Expectation of the Cube of the First Derivative Norm:\\
Notice that by H$\ddot{\text{o}}$lder's inequality we have
\begin{align*}
\mean{\prod_{i=1}^3\abs{I_1\bigl(h_{t_i}^{(0)}\bigr)}}&\leq \mathbb{E}\bigl[\bigl(\hspace{-.7cm}\overbrace{I_1\bigl(h_{t_1}^{(0)}\bigr)I_1\bigl(h_{t_2}^{(0)}\bigr)}^{\Hprod{h_{t_1}^{(0)}}{h_{t_2}^{(0)}}+I_2\bigl(h_{t_1}^{(0)}\widetilde{\otimes}h_{t_2}^{(0)}\bigr)}\hspace{-.7cm}\bigr)^2\bigr]^{\frac{1}{2}}\mean{\bigl(I_1\bigl(h_{t_3}^{(0)}\bigr)\bigr)^2}^{\frac{1}{2}}\\
&\leq \biggl(\abs{C(t_1,t_2)}+\mean{\bigl(I_2\bigl(h_{t_1}^{(0)}\widetilde{\otimes}h_{t_2}^{(0)}\bigr)\bigr)^2}^{\frac{1}{2}}\biggr)\mean{\bigl(I_1\bigl(h_{t_3}^{(0)}\bigr)\bigr)^2}^{\frac{1}{2}}\\
&\leq\biggl(\abs{C(t_1,t_2)}+\HHnorm{h_{t_1}^{(0)}\otimes h_{t_2}^{(0)}}\biggr)\Hnorm{h_{t_3}^{(0)}}\leq2,\\
\end{align*}
and
\begin{align*}
\Hprod{\abs{x}}{\prod_{i=1}^3\abs{h_{t_i}^{(1)}}}&=\int_0^{\min\{t_1,t_2,t_3\}}(2\lambda)^{\frac{3}{2}}e^{-\lambda(t_1+t_2+t_3-3u)}\,du\int_{\RR_0}\abs{x}^3\,d\nu(x)\\
&\leq\frac{2\sqrt{2\lambda}}{3}\biggl(\int_{\RR_0}\abs{x}^3\,d\nu(x)\biggr)e^{-\lambda(t_1+t_2+t_3-3\min\{t_1,t_2,t_3\})}.
\end{align*}
Putting all this together we get
\begin{align*}
\mean{\Hprod{\abs{x}}{\abs{\boldsymbol{D}F_T}^3}}&\leq T^{-\frac{3}{2}}\int_{[0,T]^3}\mean{\prod_{i=1}^3\abs{I_1\bigl(h_{t_i}^{(0)}\bigr)}}\Hprod{\abs{x}}{\prod_{i=1}^3\abs{h_{t_i}^{(1)}}}\,dt_1\,dt_2\,dt_3\\
&\leq \frac{4\sqrt{2\lambda}}{3}\biggl(\int_{\RR_0}\abs{x}^3d\nu(x)\biggr)T^{-\frac{3}{2}}\int_{[0,T]^3}e^{-\lambda(t_1+t_+t_3-3\min\{t_1,t_2,t_3\})}\,dt_1\,dt_2\,dt_3\\
&=\frac{24\sqrt{2\lambda}}{3}\biggl(\int_{\RR_0}\abs{x}^3d\nu(x)\biggr)T^{-\frac{3}{2}}\int_0^T\int_0^t\int_0^se^{-\lambda(t+s-2u)}\,du\,ds\,dt\\
&\leq \frac{4\sqrt{2}\bigl(\int_{\RR_0}\abs{x}^3d\nu(x)\bigr)}{\lambda^{\frac{3}{2}}\sqrt{T}}=O(T^{-\frac{1}{2}}).
\end{align*}

\end{itemize}
All this proves that
\begin{align*}
\boxed{\mean{\Hprod{\abs{x}}{\abs{\boldsymbol{D}F}^3}}\rightarrow0\ \text{ as } \ T\rightarrow\infty.}
\end{align*}

\item {\bf Expectation of the Contraction Norm:}
\begin{itemize}

\item Second Malliavin Derivative:
$$\boldsymbol{D}^2_{z_1,z_2}F_T=T^{-\frac{1}{2}}\int_0^Th_{t}^{(1)}(z_1)h_{t}^{(0)}(z_2)+h_{t}^{(0)}(z_1)h_{t}^{(1)}(z_2)\,dt.$$

\item Contraction of order 1:
\begin{align*}
\boldsymbol{D}^2F_T\otimes_1\boldsymbol{D}^2F_T&=T^{-1}\int_{[0,T]}h_{t}^{(1)}(z_1)h_{s}^{(1)}(z_1)\overbrace{\Hprod{h_{t}^{(0)}}{h_{s}^{(0)}}}^{=\ C(t,s)\leq1}+h_{t}^{(0)}(z_1)h_{s}^{(0)}(z_1)\overbrace{\Hprod{h_{t}^{(1)}}{h_{s}^{(1)}}}^{=\ C(t,s)\leq1}\,dt\,ds\\
&\leq T^{-1}\int_{[0,T]}h_{t}^{(1)}(z_1)h_{s}^{(1)}(z_1)+h_{t}^{(0)}(z_1)h_{s}^{(0)}(z_1)\,dt\,ds.
\end{align*}

\item Norm of the Contraction:
$$\Hnorm{\boldsymbol{D}^2F_T\otimes_1\boldsymbol{D}^2F_T}^2\leq T^{-2}\int_{[0,T]^4}\Hprod{h_{t_1}^{(0)}h_{t_2}^{(0)}}{h_{t_3}^{(0)}h_{t_4}^{(0)}}+\Hprod{h_{t_1}^{(1)}h_{t_2}^{(1)}}{h_{t_3}^{(1)}h_{t_4}^{(1)}}\,dt_1\,dt_2\,dt_3\,dt_4.$$

\item Expectation of the Contraction Norm:\\
Notice that
$$\Hprod{h_{t_1}^{(0)}h_{t_2}^{(0)}}{h_{t_3}^{(0)}h_{t_4}^{(0)}}=\Hprod{h_{t_1}^{(1)}h_{t_2}^{(1)}}{h_{t_3}^{(1)}h_{t_4}^{(1)}}=\lambda \bigl(e^{-\lambda(t_1+t_2+t_3+t_4-4\min\{t_1,t_2,t_3,t_4\})}-e^{-\lambda(t_1+t_2+t_3+t_4)}\bigr),$$
so
$$\int_{[0,T]^4}\Hprod{h_{t_1}^{(i)}h_{t_2}^{(i)}}{h_{t_3}^{(i)}h_{t_4}^{(i)}}\,dt_1\,dt_2\,dt_3\,dt_4\leq24\lambda\int_0^T\int_0^t\int_0^s\int_0^ue^{-\lambda(t+s+u-3v)}\,dv\,du\,ds\,dt\leq \frac{4T}{\lambda^2}.$$
Putting all this together we get
$$\mean{\Hnorm{\boldsymbol{D}^2F_T\otimes_1\boldsymbol{D}^2F_T}^2}\leq 2T^{-2}\int_{[0,T]^4}\Hprod{h_{t_1}^{(i)}h_{t_2}^{(i)}}{h_{t_3}^{(i)}h_{t_4}^{(i)}}\,dt_1\,dt_2\,dt_3\,dt_4\leq T^{-2}\frac{8T}{\lambda^2}=\frac{8}{T\lambda^2}.$$

\end{itemize}
All this proves that
\begin{align*}
\boxed{\mean{\Hnorm{\boldsymbol{D}^2F_T\otimes_1\boldsymbol{D}^2F_T}^2}\rightarrow0\ \text{ as } \ T\rightarrow\infty.}
\end{align*}

\item {\bf Expectation of the Squared Second Derivative Norm:}
\begin{itemize}

\item Square of the Second Malliavin Derivative:
$$(\boldsymbol{D}^2_{z_1,z_2}F_T)^2=T^{-1}\int_{[0,T]^2}h_t^{(0)}(z_1)h_s^{(0)}(z_1)h_t^{(1)}(z_2)h_s^{(1)}(z_2)+h_t^{(1)}(z_1)h_s^{(1)}(z_1)h_t^{(0)}(z_2)h_s^{(0)}(z_2)\,dt\,ds.$$

\item Inner Product of the Squared Second Malliavin Derivative:\\
Like in a previous calculation, $x\cdot h_t^{(0)}=0$, so
$$\Hprod{x}{(\boldsymbol{D}^2F_T)^2}=T^{-1}\int_{[0,T]^2}h_t^{(0)}h_s^{(0)}\overbrace{\Hprod{x}{h_t^{(1)}h_s^{(1)}}}^{\int_{\RR_0}x^3d\nu(x)C(t,s)}\,dt\,ds\leq \int_{\RR_0}\abs{x}^3d\nu(x)T^{-1}\int_{[0,T]^2}h_t^{(0)}h_s^{(0)}\,dt\,ds.$$

\item Expectation of the Squared Second Derivative Norm:\\
By the earlier computations we have,
$$\mean{\Hnorm{\Hprod{x}{(\boldsymbol{D}^2F_T)^2}}^2}\leq\biggl(\int_{\RR_0}\abs{x}^3\,d\nu(x)\biggr)^2T^{-2}\int_{[0,T]^4}\Hprod{h_{t_1}^{(0)}h_{t_2}^{(0)}}{h_{t_3}^{(0)}h_{t_4}^{(0)}}\,dt_1\,dt_2\,dt_3\,dt_4\leq\frac{4}{\lambda^2T}.$$

\end{itemize}
All this proves that
\begin{align*}
\boxed{\mean{\Hnorm{\Hprod{x}{(\boldsymbol{D}^2F)^2}}^2}\rightarrow0\ \text{ as } \ T\rightarrow\infty.}
\end{align*}

\item {\bf Existence of the Variance:}\\

Since $F_T=I_2\bigl(T^{-\frac{1}{2}}\int_{\cdot\vee\cdot}^Th_t^{(0)}\widetilde{\otimes}h_t^{(1)}dt\bigr)$ then
\begin{align*}
\text{Var}[F_T]&=\HHnorm{T^{-\frac{1}{2}}\int_{\cdot\vee\cdot}^Th_t^{(0)}\widetilde{\otimes}h_t^{(1)}dt}^2=T^{-1}\int_{[0,T]^2}\biggl(\int_{s_1\vee s_2}^T2\lambda e^{-\lambda(2t-s_1-s_2)}dt\biggr)^2\,ds_1\,ds_2\\
&= T^{-1}\int_{[0,T]^2}\bigl(e^{-\lambda\abs{s_1-s_2}}-e^{-\lambda(2T-s_1-s_2)}\bigr)^2\,ds_1\,ds_2\\
&= T^{-1}\int_{[0,T]^2}e^{-2\lambda\abs{s_1-s_2}}-2e^{-2\lambda(T-s_1\wedge s_2)}+e^{-2\lambda(2T-s_1-s_2)}\,ds_1\,ds_2\\
&= T^{-1}\biggl(\frac{T}{\lambda}-6\frac{(1-e^{-2\lambda T})}{4\lambda^2}+4Te^{-2\lambda T}+\frac{(1-e^{-2\lambda T})^2}{4\lambda^2}\biggr)=\frac{1}{\lambda}+O(T^{-1}).
\end{align*}
All this proves that,
\begin{align*}
\boxed{\text{Var}[F_T]\rightarrow \frac{1}{\lambda}\in(0,\infty)\ \text{ exists as }\ T\to\infty.}
\end{align*}

\end{itemize}
Since all five conditions are met, by Corollary \ref{mainqchaos} we have that $F_T\stackrel{law}{\longrightarrow}Z\sim\mathcal{N}(0,\frac{1}{\lambda})$ as $T\to\infty$. Moreover, due to the quantitative property of the inequality, one can estimate (from the computations above) that the rate of convergence to normality is at least $O\bigl(T^{-\frac{1}{4}}\bigr)$, i.e., $d_\PZclass\bigl(\frac{F_T}{\sqrt{\text{Var}[F_T]}},Z\bigr)=O\bigl(T^{-\frac{1}{4}}\bigr)$ as $T\to\infty$. This rate is similar to the one obtained for the linear functionals of Gaussian-subordinated fields with an underlying process given by the increments of fBm or the fractional-driven O-U, when $H\in\bigl(0,\frac{1}{2}\bigr)$.

\end{document}